\newcounter{Def}[section]
\theoremstyle{plain}
\newtheorem{proposition}[Def]{Proposition}
\newtheorem{lemma}[Def]{Lemma}
\newtheorem*{conjecture*}{Conjecture} 
\newtheorem{theorem}[Def] {Theorem}
\newtheorem*{theorem*}{Theorem}
\newtheorem{corollary}[Def] {Corollary}
\theoremstyle{definition}
\newtheorem{remark}[Def]{Remark}
\newtheorem{example}[Def]{Example}
\newtheorem{definition}[Def]{Definition}
\DeclareMathOperator{\Hom}{Hom}
\DeclareMathOperator{\Fun}{Fun}
\DeclareMathOperator{\Map}{Map}
\DeclareMathOperator{\id}{id}
\DeclareMathOperator{\N}{N}
\DeclareMathOperator{\C}{C}
\DeclareMathOperator{\D}{D}
\DeclareMathOperator{\loc}{L}
\DeclareMathOperator{\ev}{ev}
\title{A categorified Dwyer-Kan correspondence}
\author{Till Heine}
\begin{document}

\maketitle

\begin{abstract}
The classical Dold-Kan correspondence is known to admit a categorification in the form of an equivalence between the $\infty$-categories of $2$-simplicial stable $\infty$-categories and connective chain complexes of stable $\infty$-categories. In this work, we extend these concepts to give a categorification of the classical Dwyer-Kan correspondence by showing that the $\infty$-category of $2$-duplicial stable $\infty$-categories is equivalent to the $\infty$-category of conective chain complexes with right adjoints.
\end{abstract}

\tableofcontents

%\addcontentsline{toc}{section}{Introduction}

\numberwithin{equation}{section}

\section{Introduction}
The classical Dold-Kan correspondence
\begin{equation}
\nonumber
\C:\mathbf{Ab}_{\Delta} \leftrightarrow \mathbf{Ch}_{\geq 0}(\mathbf{Ab}):\N
\end{equation}
relating simplicial abelian groups and connective chain complexes is a useful tool in homological algebra. In \cite{Dy21}, the following categorified version of this correspondence for stable $\infty$-categories is proven:
\begin{theorem*}
The categorified normalized chains functor $\C$ and the categorified Dold-Kan nerve $\N$
\begin{equation}
\nonumber
\C:\mathbf{St}_{\mathbb{\Delta}} \leftrightarrow:\mathbf{Ch}_{\geq 0}(\mathbf{St}):\N
\end{equation}
extend to an equivalence between the $\infty$-category of $2$-simplicial $\infty$-categories and the $\infty$-category of connective chain complexes of stable $\infty$-categories. 
\end{theorem*}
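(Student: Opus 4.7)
The plan is to construct the categorified functors $\C$ and $\N$ explicitly, then verify that they assemble into inverse equivalences of $\infty$-categories. Both sides are presentable, so after constructing the functors on objects and checking naturality and cocontinuity, it will suffice to verify the equivalences on a well-chosen class of generators (for example, representables on the $2$-simplicial side and complexes concentrated in a single degree on the chain complex side).

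For the functor $\C$, given a $2$-simplicial stable $\infty$-category $X : \mathbb{\Delta}^{\mathrm{op}} \to \mathbf{St}$, I would define $\C_n(X)$ as the total cofiber of the diagram of degeneracies into $X_n$, the stable-categorified analogue of the classical quotient $X_n / \sum_i s_i X_{n-1}$. Since $\mathbf{St}$ is stable, this total cofiber is well-defined as the iterated cofiber of the associated $n$-cube of degeneracies. The differential $\partial : \C_n(X) \to \C_{n-1}(X)$ is induced by the alternating face map combination, interpreted at the level of cofibers rather than signed sums, and the extra $2$-simplicial data furnishes the coherent nullhomotopies $\partial \circ \partial \simeq 0$ together with the higher coherences needed to promote $\C(X)$ to a genuine $\infty$-chain complex of stable $\infty$-categories.

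The inverse functor $\N$ is built following the classical Dold-Kan formula: for a chain complex $(A_\bullet, \partial)$ of stable $\infty$-categories, set
\begin{equation}
\nonumber
\N(A)_n = \bigoplus_{[n] \twoheadrightarrow [k]} A_k,
\end{equation}
with the coproduct ranging over surjections in the simplex category. Face and degeneracy maps come from standard combinatorics; the additional $2$-cells required by the target $\mathbf{St}_{\mathbb{\Delta}}$ are produced from the coherent nullhomotopies of $\partial^2$ in $A_\bullet$ and its iterates. To verify $\C \circ \N \simeq \id$, I would reduce to an Eilenberg-Zilber-type decomposition: the total cofiber of degeneracies applied to $\N(A)_n$ isolates exactly the summand indexed by the identity surjection, namely $A_n$. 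To verify $\N \circ \C \simeq \id$, I would filter an arbitrary $2$-simplicial object by its skeleta and identify the successive subquotients with shifted copies of the $\C_n$'s.

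The main obstacle is the management of higher coherence data. Classically $\partial^2 = 0$ is a single relation, but in the stable $\infty$-categorical setting one must produce an infinite tower of compatible higher nullhomotopies in a way that is functorial and natural in the input. The central insight — and the reason the equivalence requires the $2$-simplicial rather than merely $1$-simplicial structure — is that the extra $2$-categorical data in $\mathbb{\Delta}$ packages precisely the coherence data present in an $\infty$-categorical chain complex. Making this matching precise as an equivalence of $\infty$-categories, rather than just a bijection on equivalence classes of objects, requires either a careful direct comparison at the level of coherence diagrams or an appeal to a universal property (for instance, freeness under colimits of both sides on a natural generating class), and this is where I would expect to spend most of the argument.
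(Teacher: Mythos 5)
There is a genuine gap, and it lies in your formula for the nerve. You propose $\N(A)_n=\bigoplus_{[n]\twoheadrightarrow [k]}A_k$, transporting the classical Dold--Kan splitting verbatim. In the categorified setting this is false: the nerve in degree $n$ is \emph{not} a direct sum of the $A_k$'s but a twisted gluing of them. Already in degree $1$ the paper's description shows that a $1$-cell of $\N(\mathcal{B}_\bullet)$ is the data of $A_{01}\in\mathcal{B}_1$, $A_0\in\mathcal{B}_0$ and a morphism $d(A_{01})\to A_0$ (with $A_1$ forced to be its cofiber), i.e.\ $\N(\mathcal{B}_\bullet)_1\simeq \mathcal{B}_1\times_{\mathcal{B}_0}\Fun(\Delta^1,\mathcal{B}_0)$, a lax comma category. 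This carries a semiorthogonal decomposition with pieces $\mathcal{B}_1$ and $\mathcal{B}_0$ --- which is the correct categorification of the classical splitting --- but it is not equivalent to $\mathcal{B}_1\oplus\mathcal{B}_0$ (for $\mathcal{B}_0$ the derived category of a field and $d\neq 0$ the comma category is the derived category of the $A_2$-quiver, which has more indecomposables than the product). With the honest direct sum, all face maps of $\N(A)_\bullet$ would be split and $\N\circ\C\simeq\id$ would already fail on a complex concentrated in degrees $0$ and $1$ with nonzero differential. The paper avoids this by defining $\N(\mathcal{B}_\bullet)_n$ as a full subcategory of $\Fun_{\N(\mathbb{N}^{\mathrm{op}})}(\N(\mathbb{N}^{\mathrm{op}}_{[n]/}),\chi(\mathcal{B}_\bullet))$ cut out by zero-object and relative (co)limit conditions; the gluing data is carried by the Grothendieck construction $\chi(\mathcal{B}_\bullet)$.

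A second, related problem is your reduction strategy. The nerve is a limit-type construction (it is built from functor categories and is the right adjoint classically), so it does not commute with colimits; hence presentability plus checking on a generating class of complexes concentrated in single degrees cannot establish $\N\circ\C\simeq\id$ or $\C\circ\N\simeq\id$. The paper instead proves $\C\circ\N\simeq\id$ by identifying $\C(\N(\mathcal{B}_\bullet))_n$ with sections of the Grothendieck construction via a Kan-extension argument, and proves $\N\circ\C\simeq\id$ by an explicit zigzag of pointwise weak equivalences through a lax Grothendieck construction and an auxiliary functor $\mathcal{F}$. Your definition of $\C$ as a total cofiber of the cube of degeneracies is a reasonable alternative to the paper's ``intersection of kernels of initial face maps'' model (the two are related by the semiorthogonal decomposition $(\bar{\mathcal{A}}_n^{\perp},\bar{\mathcal{A}}_n)$ the paper records), but this does not repair the nerve.
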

The constructions involved in this equivalence are of independent interest, as the categorified Dold-Kan nerve generalizes several constructions from algebraic $K$-theory such as Waldhausen's relative $S_{\bullet}$-construction.

A notable extension of the simplex category $\Delta$ of finite linear orders is the cyclic category $\Lambda$ of finite cyclic orders, introduced in \cite{Co83}. The additional symmetry of a cyclic object was utilized by Connes to introduce the Connes $B$-operator on the Hochschild homology $\mathrm{HH}_{\bullet}(A,A)$ of an algebra $A/k$. This additional differential of positive degree is present even at the level of the chains of the complex from which Hochschild homology is defined. To better understand how this additional structure fits into the Dold-Kan correspondence, it is helpful to introduce further variants of the simplex category. Of particular interest in this work will be the following variants:
\begin{equation}
\nonumber
\Delta \subset \Xi \subset \Lambda_{\infty}.
\end{equation}
Here $\Xi$ is the duplex category obtained by adding a shift morphism in each degree and the paracyclic category $\Lambda_{\infty}$ is obtained by inverting these shift morphisms. The results of \cite{DK85} yield versions of the Dold-Kan correspondence for these indexing categories as well as the cyclic category:
\begin{theorem*}
The normalized duchain complex functor $\C$ and the Dwyer-Kan nerve $\N'$ yield an equivalence
\begin{equation}
\nonumber
\C:\mathbf{Ab}_{\Xi} \leftrightarrow \mathbf{DuCh}_{\geq 0}(\mathbf{Ab}):\N'
\end{equation}
between the category of duplicial abelian groups and the category $\mathbf{DuCh}_{\geq 0}(\mathbf{Ab})$ of duchain complexes consisting of connective chain complexes $(B_{\bullet},d)$ with an additional differential $\delta:B_{\bullet} \to B_{\bullet+1}$ of positive degree.

This equivalence restricts to an equivalence between the category $\mathbf{Ab}_{\Lambda_{\infty}}$ of paracyclic abelian groups and the category of duchain complexes $(B_{\bullet},d,\delta)$ such that $\id-d \circ \delta$ and $\id-\delta \circ d$ are invertible in each degree.

It further restricts to an equivalence between the category $\mathbf{Ab}_{\Lambda}$ of cyclic abelian groups and the category of duchain complexes $(B_{\bullet},d,\delta)$ such that 
\begin{equation}
\nonumber
(\id_{B_n}-d \circ \delta)^{n+1} \circ (\id_{B_n}-\delta \circ d)^n=\id_{B_n}
\end{equation}
for $n \geq 0$.
\end{theorem*}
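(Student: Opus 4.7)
The plan is to build upon the classical Dold-Kan equivalence $\C: \mathbf{Ab}_\Delta \simeq \mathbf{Ch}_{\geq 0}(\mathbf{Ab}): \N$ by analyzing the additional data encoded by the shift morphisms. First I would observe that the inclusion $\Delta \hookrightarrow \Xi$ yields a restriction functor $\mathbf{Ab}_\Xi \to \mathbf{Ab}_\Delta$, so that any duplicial abelian group $X_\bullet$ has an underlying simplicial structure whose normalized chains $\C(X_\bullet)$ form a connective chain complex $(B_\bullet, d)$. The extra degree-raising differential $\delta$ is then constructed from the shift morphism $t_n: X_n \to X_n$ combined with a degeneracy (a signed combination of duplicial maps dictated by the relations in $\Xi$), yielding an induced operator $B_n \to B_{n+1}$ on normalized chains; the duplicial relations imply both that this operator descends well to the normalization and that it squares to zero, so $\C$ promotes to a functor into $\mathbf{DuCh}_{\geq 0}(\mathbf{Ab})$.

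For the reverse direction, I would extend the classical Dold-Kan nerve $\N$ to a functor $\N'$ out of duchain complexes. Given $(B_\bullet, d, \delta)$, the underlying simplicial structure of $\N'(B_\bullet, d, \delta)$ coincides with $\N(B_\bullet, d)$, so it remains to define the shift $t_n$ on $\N(B_\bullet, d)_n$ using the action of $\delta$. Writing the nerve via its standard presentation as a direct sum over surjections $[n] \twoheadrightarrow [k]$ of copies of $B_k$, I would define $t_n$ by a formula in which an appropriate $\delta$ is inserted on certain summands so as to shift them from $B_k$ into $B_{k+1}$, while the remaining summands are permuted according to the simplicial action of the shift. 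The duplicial relations then need to be verified directly, albeit tediously, using the established identities between $d$, $\delta$, and the structure maps of $\N(B_\bullet, d)$. That $\N'$ and the lifted $\C$ are mutually quasi-inverse reduces to the classical Dold-Kan equivalence together with naturality in $t$.

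For the paracyclic and cyclic restrictions, the key point is that $t_n$ on $\N(B_\bullet, d)_n$ admits a closed expression as a sum of an identity term and corrections involving $d$ and $\delta$, whose invertibility is equivalent to invertibility of both $\id - d \circ \delta$ and $\id - \delta \circ d$ in each degree; the paracyclic case then follows directly. For the cyclic case, the additional relation $t_n^{n+1} = \id$ translates, after computing the $(n+1)$-fold iterate of $t_n$, to the identity $(\id - d \circ \delta)^{n+1}(\id - \delta \circ d)^n = \id$. The hardest step is precisely this combinatorial identification: the paracyclic part is fairly direct once $\N'$ is constructed, but verifying the cyclic identity requires careful bookkeeping of signs and of how the inserted $\delta$ interacts with the simplicial structure of the nerve.
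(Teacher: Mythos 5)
Your outline is correct in substance and recovers the classical Dwyer--Kan strategy, but it diverges from the paper's route at the two points where all the real work sits. For the nerve, the paper does not define the shift by hand on $\N(B_{\bullet},d)$: it defines $\N'(B_{\bullet})_n=\Hom_{\mathbf{DuCh}_{\geq 0}(\mathbf{Ab})}(\C(\mathbb{Z}\Xi^n)_{\bullet},B_{\bullet})$ as a represented functor, so the duplicial identities hold automatically by precomposition, and the only thing to prove (Lemma \ref{nervecomparisonclassical}) is that restriction along $\Delta\subset\Xi$ is an isomorphism onto $\N(B_{\bullet})$; this is done by an inductive unique-extension argument over the target of $\tau(m)$, which replaces your ``tedious verification of the duplicial relations'' on the direct-sum presentation. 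For the restrictions, the paper works entirely on the normalized-chains side rather than on the nerve: since $T_n$ itself does not preserve $\C(X_{\bullet})_n$ (it permutes the face maps cyclically), only the central iterate $T_n^{n+1}$ induces an operator there, and Lemma \ref{cyclicequation} pins down the identity $(\id-d\circ\delta)^{n+1}\circ(\id-\delta\circ d)^n=T_n^{n+1}$ by a naturality argument mapping in from a free duchain complex on one generator, which forces $T_n^{n+1}=p_n(d\circ\delta)+q_n(\delta\circ d)$ for polynomials and reduces the sign bookkeeping to identifying the leading term. Your plan to write a closed formula for $t_n$ on $\bigoplus B_k$ and iterate it $n+1$ times is workable (it is essentially Dwyer--Kan's original computation) but is exactly the step you flag as hardest, and the claim that ``the paracyclic case follows directly'' elides two small but necessary points that the paper handles explicitly: the transfer lemma that $\id-g\circ f$ is injective (resp.\ surjective) iff $\id-f\circ g$ is (Lemma \ref{abliangroupcalculation}), which is what lets one pass between invertibility of $T_n$ and invertibility of \emph{both} operators $\id-d\circ\delta$ and $\id-\delta\circ d$. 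Each approach buys something: yours keeps everything concrete on the nerve side, while the paper's representable definition and free-object naturality trick eliminate most of the combinatorics and are what make the construction categorify cleanly in Section \ref{sectioncatdwyer}.
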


In the development of the theory of perverse schobers undertaken in \cite{DKS21}, examples of duchain complexes associated to paracyclic abelian groups naturally appear. An example is discussed in \cite[Section 5]{CDW23}, whose categorification leads to the notion of spherical complexes of $\infty$-categories. A categorification of the above equivalences should then give an equivalence between appropriate $\infty$-categories of connective spherical complexes of stable $\infty$-categories and paracyclic stable $\infty$-categories. A step towards such a result is made in \cite{DKSS21}, where Waldhausen's relative $S_{\bullet}$ construction is shown to admit a natural paracyclic structure for a spherical adjunction.

In this work we take a different approach by studying the duplicial case. This appears particularly fruitful, as we have seen that in the classical case the other correspondences derive from the correspondence in the duplicial case. The culmination of our analysis is the following:
\begin{theorem*}
The categorified normalized duchain complex functor $\C$ and the categorified Dwyer-Kan nerve $\N'$
\begin{equation}
\nonumber
\C:\mathbf{St}_{\mathbb{\Xi}} \leftrightarrow:\mathbf{Ch}^{\mathrm{R}}_{\geq 0}(\mathbf{St}):\N'
\end{equation}
extend to an equivalence between the $\infty$-category of $2$-duplicial stable $\infty$-categories and the $\infty$-category of connective chain complexes of stable $\infty$-categories such that the differential admits a right adjoint in each degree, with morphisms given by those transformations which commute with the adjoints.
\end{theorem*}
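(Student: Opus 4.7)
The plan is to build on the categorified Dold-Kan equivalence $\C: \mathbf{St}_{\mathbb{\Delta}} \simeq \mathbf{Ch}_{\geq 0}(\mathbf{St}): \N$ recalled from \cite{Dy21}, and to analyze the additional structure contributed by the shift morphisms generating $\mathbb{\Xi}$ over $\mathbb{\Delta}$. The inclusion $\mathbb{\Delta} \hookrightarrow \mathbb{\Xi}$ induces a restriction functor $\mathrm{res}: \mathbf{St}_{\mathbb{\Xi}} \to \mathbf{St}_{\mathbb{\Delta}}$, and on the chain-complex side there is a forgetful functor $U: \mathbf{Ch}^{\mathrm{R}}_{\geq 0}(\mathbf{St}) \to \mathbf{Ch}_{\geq 0}(\mathbf{St})$ dropping the right-adjoint data. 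I would construct the categorified Dwyer-Kan nerve $\N'$ and normalized duchain functor $\C$ so that the resulting square commutes with $\mathrm{res}$ and $U$, then deduce the equivalence from the Dold-Kan equivalence on the base together with a fiberwise identification over each 2-simplicial object.

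The heart of the argument is the fiberwise matching. Given $X \in \mathbf{St}_{\mathbb{\Delta}}$ with normalized chains $\C(X) = (B_\bullet, d)$, I would show that lifts of $X$ to $\mathbf{St}_{\mathbb{\Xi}}$ correspond to choices of right adjoints $d^{\mathrm{R}}: B_{n-1} \to B_n$ of the differentials. In one direction, given the shift morphism $t: X_n \to X_n$ of a 2-duplicial object, one extracts an induced endomorphism on the normalized chains; the duplicial relations together with the defining properties of $\N$ should force this endomorphism to exhibit a right adjoint to $d$ via a universal property. In the opposite direction, given right adjoints $d^{\mathrm{R}}$, one builds the shift on the iterated-cofiber model of $\N(B_\bullet, d)$ by combining face maps with the adjoints, generalizing the paracyclic structure on the relative $S_\bullet$ construction built in \cite{DKSS21}.

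The most delicate point, and what I expect to be the main obstacle, is the handling of morphisms. A morphism in $\mathbf{Ch}^{\mathrm{R}}_{\geq 0}(\mathbf{St})$ must satisfy the Beck-Chevalley condition that the mate transformation $f \circ d^{\mathrm{R}} \to (d')^{\mathrm{R}} \circ f$ (built from the adjunctions $d \dashv d^{\mathrm{R}}$ and $d' \dashv (d')^{\mathrm{R}}$) is an equivalence. One must verify that this condition corresponds exactly to compatibility of the underlying 2-simplicial map with the shift morphisms under $\C$. This matching is $\infty$-categorically subtle since mate calculus lives at the 2-morphism level, forcing careful bookkeeping of higher coherences, and I would attempt to reduce it to a degree-by-degree statement using the explicit cofiber-sequence description of $\N$. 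Once the fiberwise equivalence holds on both objects and morphisms, combining it with the Dold-Kan equivalence yields the theorem; restrictions to the cyclic and paracyclic settings should then follow by imposing the appropriate invertibility conditions on the adjoint data.
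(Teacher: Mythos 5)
You correctly identify the two structural pillars: that the duplicial degeneracies endow the normalized chain complex with right adjoints of the differentials (in the paper this is Lemma \ref{hasadjointslemma}, where the adjoint is exhibited as $\pi_{n+1}\circ s_{n+1}$ via the composition law for adjoints), and that morphisms on the chain-complex side must satisfy the Beck--Chevalley condition, which the paper encodes as preservation of Cartesian edges of the Grothendieck construction in the definition of $\mathbf{Ch}^{\mathrm{R}}_{\geq 0}(\mathbf{St})$. The overall reduction to the categorified Dold--Kan correspondence is also the paper's strategy.

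The gap is in the proposed mechanism. ``Equivalence on the base plus fiberwise identification'' is not an available inference here: neither forgetful functor $\mathbf{St}_{\mathbb{\Xi}}\to\mathbf{St}_{\mathbb{\Delta}}$ nor $\mathbf{Ch}^{\mathrm{R}}_{\geq 0}(\mathbf{St})\to\mathbf{Ch}_{\geq 0}(\mathbf{St})$ is a fibration, and your ``fiber'' over a $2$-simplicial object is the space of $2$-duplicial refinements, which you need to show is empty or contractible (a right adjoint, when it exists, is unique up to contractible choice, so the refinement is a property, not structure). Establishing that contractibility requires controlling the entire tower of coherences carried by the extra degeneracies $s_{n+1}$ and the $2$-morphisms of $\mathbb{\Xi}$; it is essentially equivalent to the fully-faithfulness half of the theorem, and your proposal offers no independent handle on it --- the ``universal property forcing a right adjoint'' is asserted, not derived, so the plan is circular as stated. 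The paper avoids this by constructing $\N'$ explicitly as an $\infty$-category of diagrams over $\mathbb{N}^{\mathrm{op}}_{\langle n\rangle/}$ in the Grothendieck construction, with a Cartesian-edge condition recording the adjoint data, and then proving (Lemma \ref{nervecomparison}) that the forgetful map $\N'(\mathcal{B}_{\bullet})_n\to\N(\mathcal{B}_{\bullet})_n$ is a trivial Kan fibration by an induction alternating relative left and right Kan extensions; the uniqueness of duplicial refinements is then a consequence of the equivalence rather than an input to it. Both composites $\C\circ\N'\simeq\id$ and $\N'\circ\C\simeq\id$ are produced as zigzags of natural levelwise equivalences, with the invertibility of the new zigzag deduced from the simplicial one by two-out-of-three over the forgetful functor. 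To salvage your route you would need a direct proof that the space of duplicial refinements of a $2$-simplicial object with adjoints is contractible, which in practice amounts to rebuilding something like Lemma \ref{nervecomparison} anyway.
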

This paper is structured as follows: In section \ref{2-catstructuresection} we review the various mentioned indexing categories and explain the relevance of their natural $2$-categorical structure. We then give an account of the classical correspondences in section \ref{classicalsection}, with conventions amenable to categorification.

In preparation for the following sections, we next give a brief review of $\infty$-categorical terminology with particular attention given to adjunctions of $\infty$-categories, which feature prominently in our final result. In section \ref{sectioncatdold}, we provide a version of the categorified Dold-Kan correspondence from \cite{Dy21}, which serves both as inspiration for the categorification in the duplicial case and as a step in the proof thereof. We also further discuss the categorifaction of the classical isomorphism $X_{\bullet} \cong \C(X_{\bullet}) \oplus \D(X_{\bullet})$ for a simplicial abelian group $X_{\bullet} \in \mathbf{Ab}_{\Delta}$, which features importantly in the discussion of the duplicial case. Section \ref{sectioncatdwyer} is dedicated to the categorification of the Dwyer-Kan correspondence and the proof of the main result. In particular, we introduce and give examples of the categorified Dwyer-Kan nerve construction. The paper concludes with an outlook on the further development of the theory.

\section{The indexing categories}\label{2-catstructuresection}
\subsection{$2$-categorical terminology}
A $2$-category $\mathbb{C}$ is a category enriched over the category $\mathbf{Cat}$ of categories. We denote the category of morphisms between objects $x,y \in \mathbb{C}$ by $\mathbb{C}(x,y)$. For a $2$-category $\mathbb{C}$ we denote by $\mathbb{C}^{(\mathrm{op},-)}$ the $2$-category with the same objects as $\mathbb{C}$ and morphism categories
\begin{equation}
\nonumber
\mathbb{C}^{(\mathrm{op},-)}(x,y)=\mathbb{C}(y,x)
\end{equation}
For any $2$-category $\mathbb{C}$ and an object $c \in \mathbb{C}$ we introduce the lax undercategory $\mathbb{C}_{c/}$ as follows:
\begin{itemize}
\item The objects of $\mathbb{C}_{c/}$ are $1$-morphisms $c \to x$
\item A $1$-morphism from $\varphi:c \to x$ to $\psi:c \to y$ is given by a $2$-commutative triangle
\begin{equation}
\nonumber
\begin{tikzcd}
c \arrow[rr, "\varphi"] \arrow[rrdd, "\psi"'] & {} \arrow[d, Rightarrow] & x \arrow[dd, "f"] \\
                                           & {}                       &                   \\
                                           &                          & y                
\end{tikzcd}
\end{equation}
\item A $2$-morphism from $f$ to $g$ is given by a $2$-commutative diagram
\begin{equation}
\nonumber
\begin{tikzcd}
c \arrow[rr, "\varphi"] \arrow[rrdd, "\psi"'] & {} \arrow[d, Rightarrow] & x \arrow[dd, "g"', ""{name=g}] \arrow[dd, "f", bend left=80,""{name=f}] &                          \\
                                           & {}                       & {}                                              & {} \arrow[l, Rightarrow, from=f, to=g, shorten <=0.5em] \\
                                           &                          & y                                               &                         
\end{tikzcd}
\end{equation}
\end{itemize}
We will frequently encounter lax undercategories associated to opposite $2$-categories $\mathbb{C}^{(\mathrm{op},-)}$. In these cases we denote:
\begin{equation}
\nonumber
\mathbb{C}^{(\mathrm{op},-)}_{c /}:=(\mathbb{C}^{(\mathrm{op},-)})_{c/}.
\end{equation}

\subsection{The indexing categories and $2$-categorical structure}
This section serves to define the indexing categories used throughout the remainder of this work. In the case of abelian groups, we will utilize the 1-categorical versions of these, but in the case of stable $\infty$-categories we need the categorically enriched versions of these, whose structure we also examine.
\begin{definition}
The \emph{simplex category} $\Delta$ is the category with an object $[n]=\{0, \dots, n\}$ for each $n \in \mathbb{N}$ and morphisms $f:[m] \to [n]$ given by weakly increasing maps. There is a fully faithful functor $\Delta \to \mathbf{Cat}$ into the category of small categories by considering each $[n]$ as a poset category. The category $\mathbf{Cat}$ is canonically enriched over itself, so we obtain a 2-categorical structure on the simplex category as well. We shall denote this 2-category by $\mathbb{\Delta}$. Explicitly, there is a $2$-morphism $f \Rightarrow g$ of simplices $f,g:[m] \to [n]$ in $\mathbb{\Delta}$ if and only if $f(i) \leq g(i)$ for all $i \in [n]$.
\end{definition}
In the simplex category we find the face morphisms $\partial_i:[n-1] \to [n]$ for $0 \leq i \leq n$ as follows:
\begin{equation}\nonumber
\partial_i(j)=
\begin{cases}
j & j<i \\
j+1 & j \geq i
\end{cases}
\end{equation} 
The degeneracies $\sigma_i:[n] \to [n-1]$ for $0 \leq i \leq n-1$ are given by:
\begin{equation}\nonumber
\sigma_i(j)=
\begin{cases}
j & j \leq i \\
j-1 & j >i
\end{cases}
\end{equation}
These morphisms satisfy the usual simplicial relations. We denote by $d_i:[n] \to [n-1]$ and $s_i:[n-1] \to [n]$ the opposites of these simplices in $\mathbb{\Delta}^{(\mathrm{op},-)}$. There is a chain of adjunctions
\begin{equation}\label{simpadj}
d_0 \dashv s_0 \dashv d_1 \dashv \dots \dashv s_{n-1} \dashv d_n.
\end{equation}
To see this, note for instance that for $0 \leq i \leq n-1$ we have the equality $\sigma_i \circ \partial_i=\id_{[n-1]}$, which we may interpret as a natural transformation $\sigma_i \circ \partial_i \to \id_{[n-1]}$. We further have $\partial_i(\sigma_i(k))=k$ for $k \neq i$ and $\partial_i(\sigma_i(i))=i+1$, so there is also a natural transformation $\id_{[n]} \to \partial_i \circ \sigma_i$. These two natural transformations trivially satisfy the triangle identities, since any diagram in a poset category commutes. Therefore these $2$-morphisms define an adjunction $\sigma_i \dashv \partial_i$ in $\mathbb{\Delta}$ and an adjunction $d_i \dashv s_i$ in $\mathbb{\Delta}^{(\mathrm{op},-)}$.
\begin{definition}
The \emph{paracyclic category} $\Lambda_{\infty}$ is the category with an object $\langle n \rangle$ for each $n \in \mathbb{N}$ and morphisms $f:\langle m \rangle \to \langle n \rangle$ given by weakly increasing maps $f: \mathbb{Z} \to \mathbb{Z}$ that satisfy the periodicity condition
\begin{equation} \nonumber
f(a+m+1)=f(a)+n+1 \text{ for each } a  \in \mathbb{Z}.
\end{equation}
Enrichment via the faithful functor $\Lambda_{\infty} \to \mathbf{Cat}$ obtained by regarding $\mathbb{Z}$ as a poset category again yields a 2-category $\mathbb{\Lambda}_{\infty}$, whose underlying $1$-category is $\Lambda_{\infty}$.
\end{definition}
Note that any morphism $f:\langle m \rangle \to \langle n \rangle$ in $\Lambda_{\infty}$ restricts to a weakly increasing map $\{0, \dots,m\} \to \{f(0),f(0)+1, \dots, f(0)+n+1\}$ and is uniquely determined by this restriction due to the periodicity condition. Conversely, any weakly increasing map $g:\{0, \dots, m\} \to \mathbb{Z}$ with $g(m+1) \leq g(0)+n+1$ extends uniquely to a morphism $\tilde{g}:\langle m \rangle \to \langle n \rangle$.
It follows that we can recover $\mathbb{\Delta}$ as a subcategory 
\begin{equation}
\nonumber
\mathbb{\Delta} \subset \mathbb{\Lambda}_{\infty}.
\end{equation} 
In particular, the face maps and degeneracies are defined in $\mathbb{\Lambda}_{\infty}$. Additionally, we find in each degree the following invertible shift morphism:
\begin{equation} \nonumber
t_n:\langle n \rangle \to \langle n \rangle, i \mapsto i+1.
\end{equation}
We denote the opposite of this morphism in $\mathbb{\Lambda}^{(\mathrm{op},-)}_{\infty}$ by $T_n$. Observing that 
\begin{equation}\nonumber
d_n \circ T_n=d_0: \langle n \rangle \to \langle n-1 \rangle
\end{equation}
allows us to extend the chain of adjunctions found in $\mathbb{\Delta}^{(\mathrm{op},-)}$ infintely in both directions by applying the composition law for adjoints:
\begin{equation}\label{paradj}
\dots \dashv d_{n-1} \circ T_n \dashv T_n^{-1} \circ s_{n-1} \dashv d_0 \dashv s_0 \dashv \dots \dashv d_n \dashv T_n \circ s_0 \dashv d_1 \circ T_n^{-1} \dashv T_n \circ s_1 \dashv \dots
\end{equation}
We introduce the following intermediate $2$-category:
\begin{definition}
The \emph{duplex $2$-category} 
\begin{equation}
\nonumber
\mathbb{\Xi} \subset \mathbb{\Lambda}_{\infty}.
\end{equation}
is the $2$-category containing all the objects of $\mathbb{\Lambda}_{\infty}$, with morphism categories spanned by the 1-morphisms $f:\langle m \rangle \to \langle n \rangle$ in $\mathbb{\Lambda}_{\infty}$ with $f(0) \geq 0$. We denote by $\Xi$ its underlying $1$-category.
\end{definition}
We thus have a chain of inclusions
\begin{equation}\nonumber
\mathbb{\Delta} \subset \mathbb{\Xi} \subset \mathbb{\Lambda}_{\infty}.
\end{equation}
The duplex category contains the shift morphisms $t_n$, but not their inverses. Introducing the notation $\sigma_n=\sigma_0 \circ t_n$ and $s_n=T_n \circ s_0$ in $\mathbb{\Xi}^{(\mathrm{op},-)}$, we therefore have a chain of adjunctions

\begin{equation}\label{dupadj}
d_0 \dashv s_0 \dashv d_1 \dashv \dots \dashv s_{n-1} \dashv d_n \dashv s_{n}
\end{equation}
which extends the chain of adjunctions present in $\mathbb{\Delta}^{(\mathrm{op},-)}$ on the right and cannot be further extended in $\mathbb{\Xi}^{(\mathrm{op},-)}$.

We can express the shifts $t_n: \langle n \rangle \to \langle n \rangle$ in terms of the face map $\partial_0:\langle n\rangle \to \langle n+1 \rangle$ and the newly introduced degeneracy $\sigma_{n+1}:\langle n+1 \rangle \to \langle n \rangle$ as follows: $t_n=\sigma_{n+1} \circ \partial_0$. Therefore, an alternative way of obtaining $\Xi$ from $\Delta$ is by adding additional degeneracy $\sigma_{n+1}$ in each degree. This degeneracy satisfies the natural extension of the simplicial relations, except that $\sigma_{n+1} \circ \partial_0 \neq \partial_0 \circ \sigma_n$. As we have seen, the left-hand side is the shift morphism $t_n$, whereas the right-hand side comes from $\Delta$.

We conclude with the useful observation that the iterated shifts form a natural transformation:
\begin{lemma}
Th iterated shift morphisms $t_n^{n+1}=(\sigma_{n+1} \circ \partial_0)^{n+1}:\langle n \rangle \to \langle n \rangle$  assemble to form a natural endotransformation of the identity functor $\id:\mathbb{\Xi} \to \mathbb{\Xi}$.
\end{lemma}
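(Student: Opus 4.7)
The plan is to reduce naturality to the periodicity condition in the definition of $\mathbb{\Lambda}_{\infty}$; once that is spotted, the verification becomes essentially a one-line computation. First I would observe that, as a map $\mathbb{Z} \to \mathbb{Z}$, the iterated shift $t_n^{n+1}$ is simply $i \mapsto i + n + 1$. It is worth noting in advance why the exponent $n+1$ is essential: the individual shifts $t_n$ do \emph{not} assemble to a natural transformation, as one sees by testing against the degeneracy $\sigma_0 : \langle n \rangle \to \langle n-1 \rangle$, and the periodicity rule enforces agreement only after $n+1$ iterations.

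Given any $1$-morphism $f : \langle m \rangle \to \langle n \rangle$ in $\mathbb{\Xi} \subset \mathbb{\Lambda}_{\infty}$, the defining relation $f(a+m+1) = f(a) + n + 1$ yields
$$(f \circ t_m^{m+1})(i) \;=\; f(i + m + 1) \;=\; f(i) + n + 1 \;=\; (t_n^{n+1} \circ f)(i)$$
for every $i \in \mathbb{Z}$, so the naturality square for the underlying $1$-category commutes on the nose. This already gives the statement at the level of the $1$-category $\Xi$.

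To promote this to a $2$-natural endotransformation on $\mathbb{\Xi}$, I would observe that the Hom-categories of $\mathbb{\Xi}$ are full subcategories of the poset of weakly increasing maps $\mathbb{Z} \to \mathbb{Z}$ under the pointwise order, so between any parallel pair of $1$-morphisms there is at most one $2$-morphism. Consequently, for any $2$-morphism $\alpha : f \Rightarrow g$ in $\mathbb{\Xi}$, the two whiskerings $t_n^{n+1} \ast \alpha$ and $\alpha \ast t_m^{m+1}$ necessarily agree, and the $2$-categorical naturality is automatic from the $1$-categorical one. Since the only real content is the periodicity computation above, I do not expect any genuine obstacle; the conceptual point is simply that the periodicity condition built into $\mathbb{\Lambda}_{\infty}$ was designed precisely so that one full cycle of shifts becomes central.
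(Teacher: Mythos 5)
Your proof is correct and takes the same route as the paper, which simply observes that naturality is a reformulation of the periodicity condition $f(a+m+1)=f(a)+n+1$; your computation $(f\circ t_m^{m+1})(i)=f(i+m+1)=f(i)+n+1=(t_n^{n+1}\circ f)(i)$ is exactly that observation made explicit. The additional remark that $2$-naturality is automatic because the morphism categories are posets is a worthwhile clarification the paper leaves implicit.
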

\begin{proof}
This is merely a reformulation of the condition $f(a+m+1)=f(a)+n+1$ for a morphism $f:\langle m \rangle \to \langle n \rangle$.
\end{proof}
%\begin{definition}
%The \emph{cyclic category} $\Lambda$ is obtained from the 1-category $\Lambda_{\infty}$ as a quotient category by identifying any two morphisms $f,g:\langle m \rangle \to \langle n \rangle$ such that $f-g$ is a constant multiple of $n+1$.
%\end{definition}
\section{The Classical correspondences}\label{classicalsection}
\subsection{The Dold-Kan correspondence for abelian groups}

We begin by reviewing the classical correspondences for the category $\mathbf{Ab}$ of abelian groups, so for the moment we will work with the $1$-categorical versions of our indexing categories.

Our approach follows \cite{Dy21}, except that our constructions will be opposite to those found therein, to facilitate an extension to the duplicial case. For any simplicial abelian group $X_{\bullet} \in \mathbf{Ab}_{\Delta}$ we define the normalized chain complex $\C(X_{\bullet})_{\bullet} \in \mathbf{Ch}_{\geq 0}(\mathbf{Ab})$ by
\begin{equation}\nonumber
\C(X_{\bullet})_n=\bigcap_{0 \leq i < n} \ker (d_i:X_n \to X_{n-1})
\end{equation}
with differential given by the final face maps of the simplicial abelian group \begin{equation}\nonumber
d:\C(X_{\bullet})_n \to \C(X_{\bullet})_{n-1}, x \mapsto d_n(x).
\end{equation}
The well-definedness of this map as well as the identity $d^2=0$ follow from the simplicial identites.

The above definition naturally extends to a functor:
\begin{equation}\nonumber
\C:\mathbf{Ab}_{\Delta} \to \mathbf{Ch}_{\geq 0}(\mathbf{Ab}).
\end{equation}
There is an alternative description of the normalized chains functor, which will prove useful in further analysis.

For a simplicial abelian group $X_{\bullet}$ we define a chain complex $(X_{\bullet},d_{\bullet})$ with the differential
\begin{equation}\nonumber
d:X_n \to X_{n-1}, d(x)=\sum_{i=0}^n (-1)^{n-i}d_i(x).
\end{equation}
This chain complex contains the subcomplex $\D(X_{\bullet}) \subset X_{\bullet}$ with $\D(X_{\bullet})_n$ being the subgroup generated by the degenerate (i.e. non-injective) $n$-simplices. The inclusion induces a natural isomorphism
\begin{equation}\nonumber
\C(X_{\bullet}) \cong X_{\bullet}/\D(X_{\bullet})
\end{equation}
The definition of the inverse requires a family of cubes, which will reappear throughout the remainder of this work:

\begin{definition}
For $n \geq 0$ the $n$-cube $f_n:[1]^n \to \mathbb{\Delta}([n],[n])$ in the poset category $\mathbb{\Delta}([n],[n])$ is given as follows: For an element $(a_0, \dots,a_{n-1}) \in [1]^n$ the simplex 
\begin{equation}\label{standardcube}
f_n(a_0, \dots,a_{n-1}):[n] \to [n]
\end{equation}
is defined by
\begin{equation}\nonumber
j \mapsto j+a_j
\end{equation}
with the convention that $a_{n}=0$.
\end{definition}
We are now able to give a formula for the inverse of the map $\C(X_{\bullet}) \to X_{\bullet}/\D(X_{\bullet})$. As the following lemma shows, it is induced, in degree $n$, by
\begin{equation}\label{pidef}
\pi_n:X_n \to X_n, x \mapsto \sum_{a \in \{0,1\}^n} (-1)^{|a|}f_n(a)^*(x)
\end{equation}
where $|a|=\sum_{i=0}^{n-1} a_i$.
\begin{lemma}\label{piproperties}
For $n \geq 0$ the map $\pi_n:X_n \to X_n$ is a projection onto the subgroup $\C(X_{\bullet})_n$ with kernel $\D(X_{\bullet})_n$, i.e.:
\begin{enumerate}
\item The image of $\pi_n$ is contained in $\C(X_{\bullet})_n$.
\item For $x \in \C(X_{\bullet})_n \subset X_n$ we have $\pi_n(x)=x$.
\item We have $\ker \pi_n=\D(X_{\bullet})_n$.
\end{enumerate}
\end{lemma}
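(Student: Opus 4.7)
The plan is to prove all three claims by direct pairing cancellations inside the sum $\pi_n(x)=\sum_{a\in\{0,1\}^n}(-1)^{|a|}f_n(a)^*(x)$, exploiting two complementary features of the cubes $f_n(a)$: for a fixed coordinate $i$, both $f_n(a)\circ\partial_i$ and $\sigma_i\circ f_n(a)$ are insensitive to the value of $a_i$, so flipping $a_i$ produces equal simplices paired with opposite signs.

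For (1), I would compute $d_i\pi_n(x)=\sum_a(-1)^{|a|}(f_n(a)\partial_i)^*(x)$ for $0\leq i<n$. Since $\partial_i(j)=j$ for $j<i$ and $\partial_i(j)=j+1$ for $j\geq i$, the composite $f_n(a)\partial_i$ sends $j\mapsto j+a_j$ for $j<i$ and $j\mapsto j+1+a_{j+1}$ for $j\geq i$, and hence never involves $a_i$. Pairing each $a$ with the element $a'$ obtained by flipping coordinate $i$ yields pairwise cancellation, so $d_i\pi_n(x)=0$ and therefore $\pi_n(x)\in\C(X_\bullet)_n$.

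For (2), suppose $x\in\C(X_\bullet)_n$. The $a=0$ term contributes $f_n(0)^*(x)=\id^*(x)=x$, and I would show that every other term vanishes. If $a\neq 0$ and $i$ is the smallest index with $a_i=1$, then $f_n(a)(j)=j$ for $j<i$ while $f_n(a)(i)=i+1$, so the image of $f_n(a)$ avoids $i$. Hence $f_n(a)$ factors as $\partial_i\circ g$ for some $g\colon [n]\to[n-1]$, and $f_n(a)^*(x)=g^*d_i(x)=0$ since $i\leq n-1<n$. Therefore $\pi_n(x)=x$.

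For (3), the inclusion $\D(X_\bullet)_n\subset\ker\pi_n$ follows a pattern dual to (1): for $x=s_j y\in\D(X_\bullet)_n$ one has $f_n(a)^*s_j y=(\sigma_j f_n(a))^*y$, and flipping $a_j$ leaves $\sigma_j f_n(a)$ unchanged (the only potential discrepancy is at $k=j$, where $\sigma_j(j)=\sigma_j(j+1)=j$), giving pairwise cancellation. Conversely, for any $a\neq 0$ the map $f_n(a)$ is non-injective at the largest index $i$ with $a_i=1$ (using the convention $a_n=0$, one has $f_n(a)(i)=i+1=f_n(a)(i+1)$), so $f_n(a)^*(x)\in\D(X_\bullet)_n$; consequently $x-\pi_n(x)=-\sum_{a\neq 0}(-1)^{|a|}f_n(a)^*(x)\in\D(X_\bullet)_n$ for every $x$, whence $\pi_n(x)=0$ forces $x\in\D(X_\bullet)_n$. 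The main obstacle is purely bookkeeping, namely keeping straight the convention $a_n=0$ and the distinct roles of the smallest versus the largest nonzero coordinate of $a$ in the two factorization arguments; once those are fixed, every step reduces to a sign-flip cancellation.
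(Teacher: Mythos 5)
Your proposal is correct and follows essentially the same route as the paper's proof: the same sign-flip cancellations for part (1) and the inclusion $\D(X_{\bullet})_n \subset \ker\pi_n$, and the same factorization arguments (through $\partial_i$ for part (2), through a degeneracy for the reverse inclusion in part (3)). Your closing step, observing directly that $x-\pi_n(x)\in\D(X_{\bullet})_n$ for every $x$, is a slightly more streamlined phrasing of the paper's decomposition $X_n=\C(X_{\bullet})_n+\D(X_{\bullet})_n$, but the content is identical.
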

\begin{proof}
To prove the first point, let $0 \leq i <n$ be fixed and let $v=(v_0, \dots,v_{n-1}) \in [1]^n$ be a vertex with $v_i=0$. We denote by $w=(w_0 \dots,w_{n-1}) \in [1]^n$ the opposing vertex with $w_i=1$ and $w_j=v_j$ for $j \neq i$. We then have the equality
\begin{equation}
\nonumber
f_n(v) \circ \partial_i=f_n(w) \circ \partial_i
\end{equation}
implying that for any $x \in X_n$ we have $d_i(f_n(v)^*(x))=d_i(f_n(w)^*(x))$. These two terms appear with opposite signs in the formula (\ref{pidef}) for $\pi_n$, so that the terms of $d_i(\pi_n(x))$ corresponding to those $v \in [1]^n$ with $v_i=0$ cancel pairwise with those corresponding to the $w \in [1]^n$ with $w_i=1$.

The second point follows from the observation that for any vertex $v \in [1]^n$ which is not the initial vertex $(0, \dots,0)$ the simplex $f_n(v)$ factors through $\partial_i:[n-1] \to [n]$ for some $0 \leq i<n$, so that $f_n(v)^*(x)=0$ for $x \in \C(X_{\bullet})_n$.

The inclusion $\D(X_{\bullet})_n \subset \ker \pi_n$ follows by a similar argument as the first part: For $0 \leq i <n$ and $x \in X_{n-1}$ we have the equality
\begin{equation}
\nonumber
f_n(v)^*(s_i(x))=f_n(w)^*(s_i(x))
\end{equation}
for any two opposing vertices $v,w \in [1]^n$ with $v_i \neq w_i$ and $v_j=w_j$ for $j \neq i$.

Finally, $f_n(v)$ is not injective for any $v \in [1]^n$  with $v \neq (0, \dots,0)$ and therefore factors through some degeneracy $\sigma_i:[n] \to [n-1]$. This, in conjunction with the first assertion and formula (\ref{pidef}), implies that $\D(X_{\bullet})_n+\C(X_{\bullet})_n=X_n$, so that we must indeed have $\D(X_{\bullet})_n = \ker \pi_n$.
\end{proof}
With the aim of proving that the normalized chain complex functor is an equivalence of categories we introduce an inverse construction, the Dold-Kan nerve functor
\begin{equation}\nonumber
\N:\mathbf{Ch}_{\geq 0}(\mathbf{Ab}) \to \mathbf{Ab}_{\Delta}.
\end{equation}

For a chain complex $B_{\bullet} \in \mathbf{Ch}(\mathbf{Ab})_{\geq 0}$ this nerve is given by
\begin{equation}
\nonumber
\N(B_{\bullet})_n=\Hom_{\mathbf{Ch}_{\geq 0}(\mathbf{Ab})}(\C(\mathbb{Z}\Delta^n)_{\bullet},B_{\bullet})
\end{equation}
with functoriality given by precomposition.

The quotient description of the normalized chains functor allows us to easily give a description of the data comprising the cells of this nerve in low dimensions:
\begin{itemize}
\item A 0-simplex is an element $b_0 \in B_0$
\item A 1-simplex consists of elements $b_0,b_1 \in B_0$ and $b_{01} \in B_1$ such that $d(b_{01})=b_0-b_1$.
\item In general, the datum of a $n$-simplex consists of a family $(b_{\tau})_{\tau}$ where $\tau:[m] \to [n]$ runs over all nondegenerate simplices, satisfying for each $\tau$ the equation
\begin{equation}
\nonumber
d(b_{\tau})=\sum_{i=0}^m (-1)^{m-i} b_{\tau \circ \partial_i}.
\end{equation}
\end{itemize}

The Dold-Kan nerve functor is naturally right adjoint to the normalized chains functor and we have the following:
\begin{theorem}
The adjunction
\begin{equation}
\nonumber
\C:\mathbf{Ab}_{\Delta} \leftrightarrow \mathbf{Ch}_{\geq 0}(\mathbf{Ab}) :\N
\end{equation}
furnishes an equivalence of categories.
\end{theorem}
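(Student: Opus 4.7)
The plan is to prove that both the unit $\eta:\id \to \N\C$ and counit $\varepsilon:\C\N \to \id$ of the stated adjunction $\C \dashv \N$ are natural isomorphisms. The central technical input is Lemma \ref{piproperties}: the projector $\pi_n$ of formula (\ref{pidef}) furnishes a natural splitting $X_n = \C(X_\bullet)_n \oplus \D(X_\bullet)_n$ for every simplicial abelian group, which underlies both directions of the equivalence.

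For the counit, fix a chain complex $B_\bullet$ and identify an $n$-simplex of $\N(B_\bullet)$ with a compatible family $(b_\tau)_\tau$ indexed by nondegenerate $\tau:[m]\to[n]$, as in the excerpt. The $j$-th face map of $\N(B_\bullet)$ sends $(b_\tau)_\tau$ to the family whose value at nondegenerate $\tau':[m]\to[n-1]$ is $b_{\partial_j \circ \tau'}$. The normalization condition $d_j = 0$ for $j < n$ therefore forces $b_\tau = 0$ whenever the image of $\tau$ omits some index in $\{0,\dots,n-1\}$. Among injective $\tau:[m]\to[n]$ this leaves only $\id_{[n]}$ and $\partial_n$, and these two are linked by the single compatibility relation $d(b_{\id}) = b_{\partial_n}$. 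Evaluation at $\id_{[n]}$ thus yields an isomorphism $\C(\N(B_\bullet))_n \to B_n$, which is precisely the counit; it intertwines the differentials because the chain differential on $\C\N(B_\bullet)$ is $d_n$, sending $(b_\tau)$ to the family whose $\id_{[n-1]}$-entry is $b_{\partial_n} = d(b_{\id})$.

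For the unit, I send $x \in X_n$ to the family $(\pi_m(\tau^*(x)))_\tau$, which defines an element of $\N\C(X_\bullet)_n$ by Lemma \ref{piproperties}(1), with the required compatibility following from formula (\ref{pidef}) and the simplicial identities. Bijectivity is proved by induction on $n$: the components of $\eta_n(x)$ at nondegenerate $\tau:[m]\to[n]$ with $m < n$ encode, via the inductive hypothesis applied to lower degrees, the faces $d_i(x)$, while the $\id_{[n]}$-component is $\pi_n(x) \in \C(X_\bullet)_n$. The decomposition $X_n = \C(X_\bullet)_n \oplus \D(X_\bullet)_n$ together with the fact that $\D(X_\bullet)_n$ is generated by images of the degeneracies $s_i:X_{n-1}\to X_n$ (and hence determined by the data in degree $n-1$) then recovers $x$ uniquely from the family.

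The main obstacle is the combinatorics underlying the unit: one must simultaneously verify compatibility with all simplicial operators and show that every compatible family arises from a unique $x$. A conceptually cleaner route is to observe that $\C(\mathbb{Z}\Delta^n)$ is the free connective chain complex generated by the nondegenerate simplices of $[n]$, so that chain maps out of it into $\C(X_\bullet)$ are exactly the compatible families; matching this presentation against the Eilenberg--Zilber decomposition of $X_n$ along surjections $[n] \twoheadrightarrow [m]$ reduces the bijectivity claim to identifying two canonical direct sum decompositions of $X_n$.
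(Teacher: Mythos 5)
Your treatment of the counit coincides with the paper's: both arguments observe that the normalization condition kills every component $b_{\partial_i\circ\sigma}$ with $i<n$, leaving only $b_{\id}$ and $b_{\partial_n}=d(b_{\id})$, so that evaluation at $\id_{[n]}$ is an isomorphism. Where you diverge is the unit. The paper does \emph{not} prove directly that $u:X_\bullet\to\N(\C(X_\bullet))$ is an isomorphism; instead it shows that $\C(u)$ is an isomorphism (by the same ``only $\id$ and $\partial_n$ survive'' analysis applied to $\C(\N(\C(X_\bullet)))$) and then invokes Lemma \ref{Conservativeabelian}, the conservativity of $\C$, to conclude. This sidesteps the combinatorics you correctly identify as the main obstacle, at the cost of outsourcing the real content to the conservativity lemma (itself cited from \cite{Dy21}). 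Your route --- proving bijectivity of the unit degreewise via the splitting $X_n=\C(X_\bullet)_n\oplus\D(X_\bullet)_n$ and, ultimately, the Eilenberg--Zilber decomposition of $X_n$ indexed by surjections $[n]\twoheadrightarrow[m]$ --- is the classical self-contained proof and is valid, but note that your intermediate inductive sketch is imprecise as stated: $\D(X_\bullet)_n$ is a sum of images of the degeneracies $s_i$ and is not simply ``determined by the data in degree $n-1$''; one genuinely needs the full direct-sum decomposition over all surjections $[n]\twoheadrightarrow[m]$ for $m\le n$, as your final paragraph acknowledges. If you carry out that matching of decompositions, your argument is complete and in fact reproves the conservativity statement that the paper takes as a black box.
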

\begin{proof}
We begin by analyzing the counit morphisms $\C(\N(B_{\bullet})) \to B_{\bullet}$. This map takes a family $(b_{\tau})_{\tau} \in \C(\N(B_{\bullet}))_n$ to the element $b_{\id_{[n]}} \in B_n$. Since the family is a normalized chain, it follows that for every $\sigma:[m] \to [n-1]$ and every $0 \leq i < n$ the element $b_{\partial_i \circ \sigma}$ vanishes. The only possibly non-vanishing elements are therefore $b_{\id} \in B_n$ and $b_{\partial_n} \in B_{n-1}$. Further, these elements must satisfy the equation $d(b_{\id})= b_{\partial_n}$. It follows that the counit is in fact an isomorphism.

Secondly, we analyze the unit morphism $u:X_{\bullet} \to \N(\C(X_{\bullet}))$. This map takes an element $x \in X_n$ to the family which is the image under $\N(\pi):\N(X_{\bullet}/\D(X_{\bullet})) \to \N(\C(X_{\bullet}))$ of the family $(\tau^*(x))_{\tau}$.

We analyze the behavior of $\C(u)$. An element of $\C(\N(\C(X_{\bullet})))_n$  can, under the isomorphism $\C(\N(\pi_n))^{-1}$, be viewed as a family $(x_{\tau})_{\tau}$ such that $x_{\tau}=0$ for any $\tau:[m] \to [n]$ which factors through $\partial_i$ for some $0 \leq i < n$. As above, this implies that $x_{\id}$ and $x_{\partial_n}=d(x_{\id})$ are the only possibly non-vanishing elements of this family, so that $\C(u)$ is an isomorphism. The proof now concludes with the following Lemma \ref{Conservativeabelian}.
\end{proof}
\begin{lemma}\label{Conservativeabelian}
The functor 
\begin{equation}
\nonumber
\C:\mathbf{Ab}_{\Delta} \to \mathbf{Ch}_{\geq 0}(\mathbf{Ab})
\end{equation}
is conservative, i.e. reflects isomorphisms.
\end{lemma}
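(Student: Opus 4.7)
The plan is to leverage Lemma \ref{piproperties}: since $\pi_n$ is a natural projection onto $\C(X_\bullet)_n$ with kernel $\D(X_\bullet)_n$, we obtain a natural direct sum decomposition $X_n \cong \C(X_\bullet)_n \oplus \D(X_\bullet)_n$. I would use this in two ways---first to establish that $\C$ is an exact functor, and then to show that a simplicial abelian group whose normalized chain complex vanishes must itself be zero.

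First I would observe that $\C(-)_n$ is a retract of the evaluation functor $\mathbf{Ab}_\Delta \to \mathbf{Ab}$, $X_\bullet \mapsto X_n$: the inclusion $\C(X_\bullet)_n \hookrightarrow X_n$ is naturally split by $\pi_n$. Since evaluation is exact and a retract of an exact functor between abelian categories is exact, each $\C(-)_n$ is exact. Because kernels and cokernels in $\mathbf{Ch}_{\geq 0}(\mathbf{Ab})$ are computed degreewise, $\C$ itself is exact.

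Given $f:X_\bullet \to Y_\bullet$ with $\C(f)$ an isomorphism, exactness of $\C$ yields $\C(\ker f) = 0$ and $\C(\coker f) = 0$. So it suffices to show that any simplicial abelian group $Z_\bullet$ with $\C(Z_\bullet)=0$ is itself zero. I would prove this by induction on $n$: the base case is $Z_0 = \C(Z_\bullet)_0 = 0$, since the intersection defining $\C(Z_\bullet)_0$ is empty. Assuming $Z_k=0$ for $k<n$, the equality $\D(Z_\bullet)_n = \sum_{i=0}^{n-1} s_i(Z_{n-1}) = 0$ combines with $\C(Z_\bullet)_n=0$ to force $Z_n = \C(Z_\bullet)_n \oplus \D(Z_\bullet)_n = 0$. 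Applying this to both $\ker f$ and $\coker f$ concludes the argument.

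The main obstacle is the clean verification that $\C$ is exact via the retract argument; once exactness is in hand, the rest is routine. An alternative approach would be a direct induction showing $f_n$ is an isomorphism degree by degree by separately analyzing its components on $\C(X_\bullet)_n$ and $\D(X_\bullet)_n$, but controlling the restriction of $f_n$ to the degenerate part is awkward because $\D(X_\bullet)_n = \sum_i s_i(X_{n-1})$ is not a direct sum decomposition. The exactness-based route sidesteps this difficulty entirely.
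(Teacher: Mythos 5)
Your proof is correct. The paper itself gives no argument here --- it simply cites the dual of \cite[Proposition 2.5]{Dy21} --- so your proposal supplies a complete, self-contained proof where the paper defers to a reference. The key ingredients all check out: Lemma \ref{piproperties} does give a \emph{natural} idempotent $\pi_n$ on the evaluation functor (naturality holds because $\pi_n$ is a $\mathbb{Z}$-linear combination of pullbacks along morphisms of $\Delta$), so $\C(-)_n$ is a natural retract of $\ev_n$ and hence exact; the reduction of conservativity to ``$\C(Z_\bullet)=0 \Rightarrow Z_\bullet=0$'' via $\ker f$ and $\coker f$ is valid since $\mathbf{Ab}_\Delta$ is abelian with degreewise (co)kernels; and the induction works because $\D(Z_\bullet)_n=\sum_i s_i(Z_{n-1})$ vanishes once $Z_{n-1}$ does, while $\C(Z_\bullet)_0=Z_0$ by the empty-intersection convention. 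The standard textbook proof (and, in spirit, the cited one) instead uses the finer natural decomposition $X_n\cong\bigoplus_{\sigma:[n]\twoheadrightarrow[k]}\C(X_\bullet)_k$ indexed by surjections, which makes a direct degreewise induction on $f_n$ possible; your kernel/cokernel route buys you the same conclusion from only the coarser splitting $X_n\cong\C(X_\bullet)_n\oplus\D(X_\bullet)_n$ that the paper has actually established, at the modest cost of the exactness argument. Both are fine; yours fits the paper's toolkit more tightly.
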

\begin{proof}
Dual to \cite[Proposition 2.5]{Dy21}.
\end{proof}
\subsection{The Dwyer-Kan correspondence for abelian groups}\label{classicaldwyersection}

We continue the above review with an investigation of the category $\mathbf{Ab}_{\Xi}$ of duplicial abelian groups, i.e. of functors $\Xi^{\mathrm{op}} \to \mathbf{Ab}$. For further details, we refer to \cite{DK85}; note however that we employ different conventions for the differentials of our complexes.

A duchain complex of abelian groups is graded abelian group $X_{\bullet} \in \mathbf{Ab}_{\geq 0}$ concentrated in nonnegative degrees, equipped with differentials
\begin{equation}\nonumber
\begin{split}
d:X_{\bullet} \to X_{\bullet-1} \\
\delta:X_{\bullet} \to X_{\bullet+1}
\end{split}
\end{equation}
satisfying $d^2=0$ and $\delta^2=0$.

Duchain complexes organize into a category $\mathbf{DuCh}_{\geq 0}(\mathbf{Ab})$, with morphisms given by maps of graded groups commuting with both differentials.

Given a duplicial abelian group $X_{\bullet} \in \mathbf{Ab}_{\Xi}$, we may apply the normalized chains functor $\C:\mathbf{Ab}_{\Delta} \to \mathbf{Ch}_{\geq 0}(\mathbf{Ab})$ to the underlying simplicial object to obtain a chain complex $(\C(X_{\bullet}),d)$.
In order to define the structure of a duchain complex on the normalized chain complex, we first introduce such a structure on $(X_{\bullet},d)$:
\begin{equation}
\nonumber
\delta:X_n \to X_{n+1}, x \mapsto \sum_{i=0}^{n+1}(-1)^{n+1-i}s_{i}(x).
\end{equation}
The identity $\delta^2=0$ follow readily from the duplicial relations, so that we indeed have a duchain complex. The differential $\delta$ preserves the subcomplex $\D(X_{\bullet})$ and therefore descends to a differential on the quotient chain complex, which simplifies to
\begin{equation}
\nonumber
\delta:X_n/\D(X_{\bullet})_n \to X_{n+1}/\D(X_{\bullet})_{n+1}, [x] \mapsto [s_{n+1}(x)].
\end{equation}
This differential transfers to a differential on the normalized chain $\C(X)_{\bullet}$ via the isomorphism $\pi:X_{\bullet}/\D(X_{\bullet})_{\bullet} \to \C(X_{\bullet})$.
Explicitly, this differential is given by
\begin{equation}
\nonumber
\delta:\C(X_{\bullet})_n \to \C(X_{\bullet})_{n+1}, x \mapsto \pi_{n+1}(s_{n+1}(x))= \sum_{a \in \{0,1\}^{n+1}} (-1)^{|a|}f_n(a)^*(s_{n+1}(x))
\end{equation}

We call the duchain complex $(\C(X_{\bullet}),d,\delta)$ constructed above the normalized duchain complex associated to the duplicial object $X_{\bullet} \in \mathbf{Ch}_{\geq 0}(\mathbf{Ab})$. This construction extends to a functor
\begin{equation}
\nonumber
\C:\mathbf{Ab}_{\Xi} \to \mathbf{DuCh}_{\geq 0}(\mathbf{Ab})
\end{equation}
fitting into the commutative square
\begin{equation}
\nonumber
\begin{tikzcd}
\mathbf{Ab}_{\Xi} \arrow[d] \arrow[r, "\C"] & \mathbf{DuCh}_{\geq 0}(\mathbf{Ab}) \arrow[d] \\
\mathbf{Ab}_{\Delta} \arrow[r, "\C"]        & \mathbf{Ch}_{\geq 0}(\mathbf{Ab})            
\end{tikzcd}
\end{equation}
where the vertical arrows are the forgetful functors.
As in the simplicial case, this functor admits a right adjoint, the Dwyer-Kan nerve functor
\begin{equation}
\nonumber
\N':\mathbf{DuCh}_{\geq 0}(\mathbf{Ab}) \to \mathbf{Ab}_{\Xi}.
\end{equation}
For a duchain complex $B_{\bullet} \in \mathbf{DuCh}_{\geq 0}(\mathbf{Ab})$ this nerve is given by
\begin{equation}
\nonumber
\N'(B_{\bullet})_n=\Hom_{\mathbf{DuCh}_{\geq 0}(\mathbf{Ab})}(\C(\mathbb{Z}\Xi^n)_{\bullet},B_{\bullet})
\end{equation}
with the duplicial structure coming from precomposition.

We analyze the cells of this duplicial object. There is an isomorphism
\begin{equation}
\nonumber
\C(\mathbb{Z}\Xi^n)_{\bullet} \cong (\mathbb{Z}\Xi^n)_{\bullet}/\D(\mathbb{Z}\Xi^n)_{\bullet}
\end{equation}
of duchain complexes. In degree $m$, the abelian group $\D(\mathbb{Z}\Xi^n)_m$ is spanned by the images of $s_i$ for $0 \leq i<m$. Therefore it has a basis given by those morphisms $f:\langle m \rangle \to \langle n \rangle$ in $\Xi$ which are not injective on $\{0, \dots,m\}$ and a basis of the quotient  $\C(\mathbb{Z}\Xi^n)_m$ is given by those morphisms $f:\langle m \rangle \to \langle n \rangle$ in $\Xi$ which are injective on $\{0, \dots,m\}$.
\begin{itemize}
\item
A $0$-cell is given by elements $b_i \in B_0$ and $b_{i i+1} \in B_1$ for $i \in \mathbb{N}$, satisfying the following equations:
\begin{equation}\nonumber
\begin{split}
b_{i}-b_{i+1}=d(b_{i i+1}) \\
b_{i i+1}=\delta(b_i)
\end{split}
\end{equation}
We conclude that $\N'(B_{\bullet})_0 \cong B_0$.
\item A $1$-cell is given by elements $b_i \in B_0$, $b_{ii+1},b_{ii+2} \in B_1$ and $b_{ii+1i+2} \in B_2$ for $i \in \mathbb{N}$ satisfying certain compatibility requirements. These are as follows:
\begin{equation}\nonumber
\begin{split}
b_{i}-b_{i+1}&=d(b_{ii+1}) \\
b_{i}-b_{i+2}&=d(b_{ii+2}) \\
\delta(b_i)&=b_{i,i+2} \\
b_{ii+1}-b_{ii+2}+b_{i+1i+2}&=d(b_{ii+1i+2}) \\
\delta(b_{ii+1})&=b_{ii+1i+2}
\end{split}
\end{equation}
From these equations it is not difficult to conclude that a 1-cell is uniquely determined by the elements $b_0 \in B_0$ and $b_{01} \in B_1$ with no compatibility requirements between them.
\item In general, an $n$-cell consists of a family $(b_{\tau})_{\tau}$ where $\tau:\langle m \rangle \to \langle n \rangle$ runs over all morphisms in $\Xi$ which are injective on $\{0, \dots,m\}$, subject to the equations
\begin{equation}
\nonumber
d(b_{\tau})=\sum_{i=0}^{m} (-1)^{m-i} b_{\tau \circ \partial_i}
\end{equation}
and 
\begin{equation}
\nonumber
\delta(b_{\tau})=b_{\tau \circ \sigma_{m+1}}
\end{equation}
where, in the second equation, we set $b_{\sigma}=0$ for $\sigma:\langle m+1 \rangle \to \langle n \rangle$ not injective on $\{0, \dots,m+1\}$.
\end{itemize}
Comparing the the above analysis of the Dwyer-Kan nerve in low dimensions to that of the simplicial Dold-Kan nerve leads naturally to the following lemma:
\begin{lemma}\label{nervecomparisonclassical}
Let $B_{\bullet} \in \mathbf{DuCh}_{\geq 0}(\mathbf{Ab})$ be a duchain complex with differentials $d,\delta$. By suppressing the differential $\delta$ of positive degree, we may also consider $B_{\bullet}$ as a chain complex. The inclusion $\Delta \to \Xi$ induces an isomorphism
\begin{equation}
\nonumber
\N'(B_{\bullet})_{\bullet} \cong \N(B_{\bullet})_{\bullet}
\end{equation}
between the underlying simplicial abelian group of the Dwyer-Kan nerve and the Dold-Kan nerve of the underlying chain complex.
\end{lemma}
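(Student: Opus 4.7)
The plan is to establish the isomorphism by explicitly inverting the restriction map. The inclusion $\Delta \subset \Xi$ induces
\[
\Phi \colon i^{*} \N'(B)_{\bullet} \longrightarrow \N(B)_{\bullet},
\]
sending a $\Xi$-indexed family $(b_{\tau})_{\tau}$ satisfying the $d$- and $\delta$-relations to the sub-family $(b_{\tau})_{\tau \in \Delta}$. Restriction preserves the $d$-relation since every face $\tau \circ \partial_{i}$ of a $\Delta$-morphism $\tau$ again lies in $\Delta$, and it commutes with precomposition by $\Delta$-morphisms, so $\Phi$ is a morphism of simplicial abelian groups.

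To construct an inverse $\Psi$, I would recursively extend an arbitrary $\Delta$-datum $(c_{\tau})_{\tau}$ to a full $\Xi$-family $(b_{\tau})_{\tau}$. Each injective $\tau \colon \langle m \rangle \to \langle n \rangle$ in $\Xi$ is represented by a strictly increasing tuple $(a_{0}, \dots, a_{m})$ with $0 \leq a_{0}$ and $a_{m} \leq a_{0} + n+1$; the $\Delta$-part corresponds to $a_{m} \leq n$. The induction runs on the well-founded measure $(\max(a_{m} - n, 0), a_{0})$ in lexicographic order, so that $\Delta$-cells have minimal measure. For $\tau \in \Delta$ set $b_{\tau} := c_{\tau}$. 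For an extra with $a_{m} = a_{0} + n+1$, there is a unique $\tau'$ of degree $m-1$ with $\tau = \tau' \circ \sigma_{m}$, and the $\delta$-relation for $\tau'$ forces $b_{\tau} := \delta(b_{\tau'})$; this $\tau'$ has strictly smaller measure. For the remaining extras ($n < a_{m} < a_{0} + n+1$), the canonical choice $\tilde\tau := (a_{m}-n-1,\, a_{0},\, a_{1}, \dots, a_{m})$ is a $\delta$-extra of degree $m+1$ with $\tau = \tilde\tau \circ \partial_{0}$, and every other face $\tilde\tau \circ \partial_{i}$ has strictly smaller measure (its first coordinate becomes $a_{m}-n-1 < a_{0}$); the $d$-relation $d(b_{\tilde\tau}) = \sum_{i=0}^{m+1} (-1)^{m+1-i} b_{\tilde\tau \circ \partial_{i}}$ then uniquely solves for $b_{\tau}$, where $b_{\tilde\tau}$ itself is accessible because $\tilde\tau$ is a $\delta$-extra. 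The $1$-cell analysis carried out in the text is the prototypical case, recovering $b_{12}$ as $d\delta b_{01} + \delta b_{0} - b_{01}$ via exactly this procedure with $\tilde\tau = (0,1,2)$.

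The remaining work is verifying that the resulting family satisfies every imposed $d$- and $\delta$-relation, not only those used in the recursion, and that $\Phi \circ \Psi$ and $\Psi \circ \Phi$ are the identity. This consistency reduces to a direct computation using $d^{2} = 0$, $\delta^{2} = 0$ in $B$ together with the identities in $\Xi$ governing the new degeneracy (notably $\sigma_{m+1} \circ \partial_{0} = t_{m}$) and the periodicity $f(a + m+1) = f(a) + n+1$ of $\Xi$-morphisms. The composition $\Phi \circ \Psi = \id$ is immediate since the extension agrees with the input on $\Delta$, and $\Psi \circ \Phi = \id$ then follows from the uniqueness of the recursive extension given the relations. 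Naturality of $\Psi$ in $[n] \in \Delta$ follows because precomposition with a $\Delta$-morphism preserves the measure stratification and commutes with the canonical reductions. I expect the main obstacle to be organizing the induction so that every extra is reached exactly once by the canonical reduction and verifying the global consistency of all simultaneously imposed relations; this is the combinatorial content underlying the explicit low-dimensional computations given above.
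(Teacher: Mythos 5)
Your construction is essentially the paper's proof: the same recursive extension of a $\Delta$-datum over all of $\Xi$, with the same canonical choices (cells with $a_m=a_0+n+1$ forced by the $\delta$-relation applied to $\tau\circ\partial_m$, and the remaining cells solved from the $d$-relation of the auxiliary morphism obtained by prepending $a_m-n-1$, which is exactly the paper's $\varphi$). Your lexicographic measure $(\max(a_m-n,0),a_0)$ is just a repackaging of the paper's induction on $k=\tau(m)$ with its two sub-steps, and your degree-one computation of $b_{12}$ matches the paper's.

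The one place where you stop short is precisely where the paper does its only nontrivial work: checking that the family produced by the recursion satisfies \emph{all} the $d$-relations, not merely the ones used to solve for the new values. You correctly name the ingredients ($d^2=0$ and the simplicial identities) but do not execute the argument. The paper's device is worth recording: for a newly reached $\psi:\langle m+1\rangle\to\langle n\rangle$ with $\psi(0)>k-n$, one prepends $k-n$ once more to form $\rho:\langle m+2\rangle\to\langle n\rangle$ with $\rho\circ\partial_0=\psi$, applies $d$ to the defining relation $d(b_\rho)=\sum_l(-1)^{m+2-l}b_{\rho\circ\partial_l}$, substitutes the already-verified relations for the faces $\rho\circ\partial_l$ with $l>0$, and lets the simplicial identities cancel everything except the desired relation for $\psi$. (The new $\delta$-relations need no separate check, since the only ones that become meaningful at stage $k+1$ are exactly those used to define the $\delta$-extras.) With that verification supplied, your argument coincides with the paper's.
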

\begin{proof}
An element of $\N(B_{\bullet})_n$ is a family $(b_{\tau})_{\tau}$, assigning to every nondegenerate simplex $\tau:[m] \to [n]$ an element $b_{\tau} \in B_m$, such that $d(b_{\tau})=\sum_{i=0}^{m} (-1)^{m-i} b_{\tau \circ \partial_i}$.

An element of $\N'(B_{\bullet})_n$ is a family $(b_{\tau})_{\tau}$, assigning to each $\tau:\langle m \rangle \to \langle n \rangle$ in $\Xi$ that is injective on $\{0, \dots, m\}$ an element $b_{\tau} \in B_m$, such that the equations $d(b_{\tau})=\sum_{i=0}^{m} (-1)^{m-i} b_{\tau \circ \partial_i}$ and $\delta(b_{\tau})=b_{\tau \circ \sigma_{m+1}}$ are satisfied.

We prove that any family $(b_{\tau})_{\tau}$ of the first kind can be extended uniquely to a family of the second kind by inductively extending to a family $M_k=(b_{\tau})_{\tau}$, which is defined for all $\tau:\langle m \rangle \to \langle n \rangle$ in $\Xi$ that are injective on $\{0, \dots, m\}$ and satisfy $\tau(m) \leq k$, and which satisfies all conditions imposed on an element of $\N'(B_{\bullet})_n$ which make sense for such a family.

For $k=n$ the family $M_n$ corresponds precisely to an element of $\N(B_{\bullet})_n$.

Mow suppose we are given a such a family $M_k=(b_{\tau})_{\tau}$ for some $k \geq n$. For each $\tau:\langle m \rangle \to \langle n \rangle$ that is injective on $\{0, \dots m\}$ and that satisfies $\tau(0)=k-n$ and $\tau(m)=k+1$ we are forced to set $b_{\tau}=\delta(b_{\tau \circ \partial_m})$ because $\tau=\tau \circ \partial_m \circ  \sigma_m$. This is the only possible such partial extension of $M_k$.

We now show that this partial extension further extends uniquely to an element of $M_{k+1}$. For any $\tau:\langle m \rangle \to \langle n \rangle$ that is injective on $\{0, \dots, m\}$ with $\tau(m)=k+1$ and $\tau(0)>k-n$ we have a map $\varphi:\langle m+1 \rangle \to \langle n \rangle$ determined by
\begin{equation}\nonumber
\begin{split}
\varphi(0)&=k-n \\
\varphi \circ \partial_0&=\tau
\end{split}
\end{equation}
The value of $b_{\varphi}$ is already defined and for our final family the equation
\begin{equation}
\nonumber
d(b_{\varphi})=\sum_{i=0}^{m+1}(-1)^{m+1-i}b_{\varphi \circ \partial_i}
\end{equation}
must hold.
For all $i>0$ the value $b_{\varphi \circ \partial_i}$ is already determined by assumption, so this equation determines the value of $b_{\tau}=b_{\phi \circ \partial_0}$.

Finally, to show that this extension indeed satisfies all the newly required equations suppose that $\psi:\langle m+1 \rangle \to \langle n \rangle$ is injective on $\{0, \dots,m+1\}$ and satisfies $\psi(m+1) =k+1$. We must show that
\begin{equation}
\nonumber
d(b_{\psi})=\sum_{i=0}^{m+1}(-1)^{m+1-i}b_{\psi \circ \partial_i}.
\end{equation}
Let $\tau=\psi \circ \partial_0$. If $\psi(0)=k-n$, then this equation is satisfied by the construction of $b_{\tau}$. Otherwise, we define a morphism $\rho:\langle m+2\rangle \to \langle n \rangle$ by
\begin{equation}
\nonumber
\begin{split}
\rho(0)&=k-n \\
\rho \circ \partial_0&=\psi
\end{split}
\end{equation}
We have
\begin{equation}\label{phidiffererntial}
d(b_{\rho})=\sum_{l=0}^{m+2}(-1)^{m+2-l}b_{\rho \circ \partial_l}
\end{equation}
and similarly for all $0<l \leq m+2$:
\begin{equation}
\nonumber
d(b_{\rho \circ \partial_l})=\sum_{p=0}^{m+1}(-1)^{m+1-p}b_{\rho \circ \partial_l \circ \partial_p}.
\end{equation}
By applying $d$ to equation (\ref{phidiffererntial}) and inserting the above equations, we obtain
\begin{equation}\nonumber
0=(-1)^{m+2} d(b_{\psi})+\sum_{l=1}^{m+2} \sum_{p=0}^{m+1}(-1)^{1+l+p}b_{\rho \circ \partial_l \circ \partial_p}.
\end{equation}
Due to the simplicial relations, most of the terms cancel and we are left with the desired expression
\begin{equation}
\nonumber
d(b_{\psi})=\sum_{l=0}^{m+1}(-1)^{m+1-l}b_{\psi \circ \partial_l}.
\end{equation}
\end{proof}
Diagrammatically, we have shown that the natural transformation
\begin{equation}
\nonumber
\begin{tikzcd}
\mathbf{DuCh}_{\geq 0}(\mathbf{Ab}) \arrow[d] \arrow[r, "\N'"] & \mathbf{Ab}_{\Xi} \arrow[ld, "\cong"', Rightarrow] \arrow[d] \\
\mathbf{Ch}_{\geq 0}(\mathbf{Ab}) \arrow[r, "\N"]              & \mathbf{Ab}_{\Delta}                                         
\end{tikzcd}
\end{equation}
is an isomorphism.
\begin{theorem}
The adjunction
\begin{equation}
\nonumber
\C:\mathbf{Ab}_{\Xi} \leftrightarrow \mathbf{DuCh}_{\geq 0}(\mathbf{Ab}):\N'
\end{equation}
furnishes an equivalence of categories.
\end{theorem}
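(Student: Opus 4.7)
The plan is to deduce this equivalence from the already-established Dold--Kan equivalence by exploiting the conservative forgetful functors $U:\mathbf{Ab}_{\Xi} \to \mathbf{Ab}_{\Delta}$ (restriction along $\Delta \subset \Xi$) and $U':\mathbf{DuCh}_{\geq 0}(\mathbf{Ab}) \to \mathbf{Ch}_{\geq 0}(\mathbf{Ab})$ (forgetting the second differential $\delta$). Both are conservative, since a morphism in either target category is an isomorphism if and only if each component on the underlying abelian groups is, and neither restriction nor forgetting $\delta$ alters this condition. Directly from the construction of the normalized duchain complex, the square $U' \circ \C = \C \circ U$ commutes on the nose, while Lemma \ref{nervecomparisonclassical} supplies a natural isomorphism $U \circ \N' \cong \N \circ U'$.

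To show that the unit $\eta':\id \to \N'\C$ is an isomorphism, I would apply $U$ and identify $U\eta'$, under the above isomorphism, with the Dold--Kan unit $\eta_{U(-)}:U(-) \to \N\C U(-)$. This identification can be extracted from the explicit formula $\eta'_X(x)=(\pi_m(\tau^*x))_{\tau}$: restricting the indexing $\tau$ to the subcategory $\Delta \subset \Xi$ recovers precisely the formula for the simplicial unit. Since $\eta_{U(-)}$ is an isomorphism by the Dold--Kan theorem and $U$ is conservative, $\eta'$ is an isomorphism as well. A parallel argument using $U'$ reduces the counit $\C\N' \to \id$ to the Dold--Kan counit, which is likewise an isomorphism.

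The most delicate step is verifying that the identification of $U\eta'$ with $\eta_{U(-)}$ (and dually for counits) takes place through the specific natural isomorphism supplied by Lemma \ref{nervecomparisonclassical}, so that the conservativity argument genuinely applies. This is essentially a bookkeeping exercise involving the adjunction triangles and the universal properties of $\N$ and $\N'$: one must confirm that the mate of the isomorphism $U \N' \cong \N U'$ along the adjunctions $\C \dashv \N$ and $\C \dashv \N'$ is compatible with the strict equality $U' \C = \C U$. Once this compatibility is in hand, the theorem follows at once.
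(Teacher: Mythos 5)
Your proposal is correct and follows essentially the same route as the paper: the paper's proof is a one-line appeal to the simplicial Dold--Kan correspondence together with Lemma \ref{nervecomparisonclassical}, and your argument is precisely the detailed unpacking of that reduction (conservative forgetful functors, the strict compatibility $U' \circ \C = \C \circ U$, and the natural isomorphism $U \circ \N' \cong \N \circ U'$). The compatibility check you flag at the end is the only content the paper leaves implicit, and your explicit-formula justification for it is sound.
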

\begin{proof}
This is an immediate consequence of the simplicial Dold-Kan correspondence and the above Lemma \ref{nervecomparisonclassical}.
\end{proof}

\subsection{Invertibility of the shift morphisms}
Given a duplicial abelian group $X_{\bullet} \in \mathbf{DuCh}_{\geq 0}(\mathbf{Ab})$ we analyze under what circumstances the shifts $T_n=d_0 \circ s_{n+1}:X_n \to X_n$ are invertible. Recall that we have a natural duplicial endomorphism $X_{\bullet} \to X_{\bullet}$ given in degree $n$ by
\begin{equation}\nonumber
T_n^{n+1}:X_n \to X_n
\end{equation}
and of course invertibility of this natural transformation is equivalent to invertibility of all the shifts. Since the normalized duchain complex functor is an equivalence of categories, invertibility of this duplicial map is equivalent to the invertibility of the induced map of normalized duchains. We give the following computation:
\begin{lemma}\label{cyclicequation}
On the normalized duchain complex $\C(X_{\bullet})$ of a duplicial abelian group $X_{\bullet} \in \mathbf{Ab}_{\Xi}$ we have the equality
\begin{equation}
\nonumber
(\id- d \circ \delta)^{n+1} \circ  (\id- \delta \circ d)^n=T_n^{n+1}:\C(X_{\bullet})_n \to \C(X_{\bullet})_n
\end{equation}
\end{lemma}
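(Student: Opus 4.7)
The plan is to use naturality to reduce the identity to a universal computation in $\C(\mathbb{Z}\Xi^n)_n$, and then to exploit $d^2 = \delta^2 = 0$ to collapse the left-hand side dramatically. Both sides are natural endomorphisms of the functor $X \mapsto \C(X)_n : \mathbf{Ab}_\Xi \to \mathbf{Ab}$, so it suffices to verify the equality on the class $[\id_{\langle n \rangle}] \in \C(\mathbb{Z}\Xi^n)_n$ of the identity morphism in the free duplicial abelian group on $\langle n \rangle$. Under the isomorphism $\C(\mathbb{Z}\Xi^n) \cong \mathbb{Z}\Xi^n / \D(\mathbb{Z}\Xi^n)$ described in Section~\ref{classicaldwyersection}, all three operators act by precomposition: $d[\tau] = \sum_{i=0}^m (-1)^{m-i}[\tau \circ \partial_i]$, $\delta[\tau] = [\tau \circ \sigma_{m+1}]$, and $T_n^{n+1}[\tau] = [\tau \circ t_n^{n+1}]$. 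In particular the right-hand side evaluates to $[t_n^{n+1}]$, the class of the shift by $n+1$.

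Next I would simplify the left-hand side. Since $(d\delta)(\delta d) = d\delta^2 d = 0$ and $(\delta d)(d\delta) = \delta d^2 \delta = 0$, no mixed terms survive the binomial expansion, and one obtains
\begin{equation*}
(\id - d\delta)^{n+1}(\id - \delta d)^n = (\id - \delta d)^n + (\id - d\delta)^{n+1} - \id.
\end{equation*}
The problem thus reduces to showing that applying this polynomial in $d$ and $\delta$ to $[\id_{\langle n \rangle}]$ produces $[t_n^{n+1}]$. Expanding via the precomposition formulas yields an alternating sum of classes of iterated compositions of $\partial_i$'s and $\sigma_{m+1}$'s in $\Xi(\langle n \rangle, \langle n \rangle)$. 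Using the classification of the non-degenerate basis of $\C(\mathbb{Z}\Xi^n)_n$ --- namely the iterated shifts $t_n^c$ together with the partial shifts $t_n^c \cdot (\sigma_{n+1}\partial_j)$ for $1 \leq j \leq n$ --- most iterated compositions become non-injective on $\{0, \dots, n\}$ and hence vanish in the quotient by $\D$, while the remaining terms should cancel in pairs via the duplicial identities to leave $[t_n^{n+1}]$.

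The main obstacle will be organizing this combinatorial cancellation cleanly rather than by an error-prone brute force. A likely viable alternative route is induction on $n$: the base case $n = 0$ is immediate since $d$ vanishes on $\C(X)_0 = X_0$, so the identity reduces to $\id - d\delta = T_0$, verified directly from $\delta[\id_{\langle 0 \rangle}] = [\sigma_1]$ and $d[\sigma_1] = [\id] - [t_0]$. The inductive step would combine the naturality identity $d \circ T_n^{n+1} = T_{n-1}^n \circ d$ for the iterated shift with the intertwining relations $Ad = dB = d$ and $\delta A = B\delta = \delta$ for $A = \id - \delta d$ and $B = \id - d\delta$, which would propagate the identity downward through dimensions one step at a time.
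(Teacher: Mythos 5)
Your reduction by naturality to the universal element $[\id_{\langle n \rangle}] \in \C(\mathbb{Z}\Xi^n)_n$ is sound, the base case $n=0$ checks out, and the identity $(\id-d\delta)^{n+1}(\id-\delta d)^n=(\id-d\delta)^{n+1}+(\id-\delta d)^n-\id$ is correct and genuinely relevant: it exhibits the left-hand side in the form $p_n(d\delta)+q_n(\delta d)$ with $p_n=(1-Y)^{n+1}$ and $q_n=(1-Y)^n-1$, which is precisely the shape the paper's argument establishes for $T_n^{n+1}$. But the heart of the lemma --- that this expression, evaluated at $[\id_{\langle n \rangle}]$, equals $[t_n^{n+1}]$ --- is not proved by either of your routes. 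Route 1 stops at ``the remaining terms should cancel in pairs''; that cancellation \emph{is} the content of the lemma, and it is not routine: the injective basis of $\C(\mathbb{Z}\Xi^n)_n$ is infinite (all shifts $t_n^c$ together with their one-gap variants), and expanding $(\id-d\delta)^{n+1}$ at $[\id]$ produces compositions of length up to $2(n+1)$ whose images in the quotient by $\D$ must be tracked.

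The inductive route has a concrete flaw. The relations $dB=d$, $Ad=d$, $\delta A=\delta$, $B\delta=\delta$ (for $A=\id-\delta d$, $B=\id-d\delta$) are correct, and together with $d_n\circ T_n^{n+1}=T_{n-1}^{n}\circ d_n$ they do yield $d_n\circ(B_n^{n+1}A_n^n)=B_{n-1}^{n}\circ d_n=T_{n-1}^{n}\circ d_n=d_n\circ T_n^{n+1}$ assuming the statement in degree $n-1$. This, however, only shows that the difference of the two sides lands in $\ker d_n$, and $d_n$ is far from injective on $\C(X_\bullet)_n$; dually, precomposing with $\delta$ is not surjective. So the identity does not propagate up through degrees by these relations alone. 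What is missing is an a priori structural constraint on $T_n^{n+1}$. The paper supplies one by corepresenting the functor ``degree-$n$ element of a duchain complex'' by an explicit free duchain complex $F^n$, whose degree-$n$ part is spanned by $(d\delta)^k(f_0)$ and $(\delta d)^j(f_0)$; hence \emph{every} natural operation, in particular $T_n^{n+1}$, equals $p_n(d\delta)+q_n(\delta d)$ with $q_n(0)=0$. Commutation with the differentials then determines $q_n$ from $p_{n-1}$, and a single explicit computation of $d_{n+1}(\delta(x))$ --- showing $T_n(x)\equiv(-1)^n\bigl(x-d_{n+1}(\delta(x))\bigr)$ modulo terms involving $d_n(x)$ --- identifies $p_n=(1-Y)^{n+1}$. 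Some input of this kind (or an actual execution of the combinatorial cancellation in your first route) is required; as written, the proposal is a plausible plan rather than a proof.
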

\begin{proof}
The duplicial endomorphisms given by iterated shifts organize into a natural endotransformation of the identity functor $\id_{\mathbf{Ab}_{\Xi}}$, so, by equivalence of categories, also into a natural endotransformation of $\id_{\mathbf{DuCh}_{\geq 0}(\mathbf{Ab})}$.

For the sake of readability, denote $R=\mathbb{Z}$. For each $n \geq 1$ we define the free duchain complex $F^n$ on $1$ generator in degree $n$ by:
\begin{equation}
\nonumber
\begin{split}
F^n_{n-1}&=R^{\mathbb{Z}_{<0}} \\
F^n_{n}&=R^{\mathbb{Z}} \\
F^n_{n+1}&=R^{\mathbb{Z}_{>0}} \\
F^n_m &=0 \text{ else}
\end{split}
\end{equation}
We denote the standard basis elements of $F^n_{n-1}$ by $e_k$, those of $F^n_{n}$ by $f_k$ and those of $F^n_{n+1}$ by $g_k$ respectively. Their differentials are defined as:
\begin{equation}
\nonumber
\begin{split}
\delta(e_k)&=f_k \\
d(f_k)&=e_{k-1}  \text{ for } k \leq 0 \\
d(f_k)&=0 \text{ for } k>0 \\
\delta(f_k)&=0 \text{ for } k<0 \\
\delta(f_k)&=g_{k+1} \text{ for } k \geq 0 \\
d(g_k) &=f_k
\end{split}
\end{equation}
For $n=0$ the free duchain complex on $1$ generator $F^0$ is defined similarly.

For each element $x_n \in \C(X_{\bullet})_n$ of the duchain complex there is a unique morphism $\varphi:F^n \to \C(X_{\bullet})$ with $\varphi(f_0)=x_n$. Due to naturality, the diagram
\begin{equation}\nonumber
\begin{tikzcd}
F^n_n \arrow[d, "T_n^{n+1}"'] \arrow[r] & \C(X_{\bullet})_n \arrow[d, "T_n^{n+1}"] \\
F_n^n \arrow[r]                       & \C(X_{\bullet})_n                     
\end{tikzcd}
\end{equation}
commutes, so we can conclude that
\begin{equation}\label{polynomials}
T_n^{n+1}=p_n(d \circ \delta) +q_n(\delta \circ d)
\end{equation}
for some polynomials $p_n,q_n \in \mathbb{Z}[Y]$. From the fact that the iterated shift commutes with the differentials, one can further conclude that the $p_n$ have the same constant term $p_{-1}$ and that $q_n=p_{n-1}-p_{-1}$.

It therefore only remains to show that in our case we have
\begin{equation}
\nonumber
p_n=(1-Y)^{n+1}
\end{equation}
To prove this, note that for $(a_0, \dots,a_n) \in \{0,1\}^{n+1}$ with $a_{i-1}=0$ and $a_{i}=1$ for some $0 \leq i \leq n-1$ (with the convention that $a_{-1}=a_{n}$), the morphism
\begin{equation}
\nonumber
\sigma_{n+1} \circ f_{n+1}(a_0, \dots,a_{n}) \circ \partial_{n+1}: \langle n \rangle \to \langle n \rangle
\end{equation}
factors through $\partial_i: \langle n-1 \rangle \to \langle n \rangle$. The corresponding terms therefore vanish when computing $d_{n+1}(\delta(x))$ for $x \in \C(X_{\bullet})_n$ and we have
\begin{equation}
\nonumber
d_{n+1}(\delta(x))=x+ \sum_{i=0}^{n-1} (-1)^{i+1}T_n(s_i(d_n(x)))+(-1)^{n+1}T_n(x).
\end{equation}
It follows that $T_n(x) \equiv (-1)^{n}(x-d_{n+1}(\delta(x)))$ modulo terms involving $d_{n}(x)$, which implies the claim.
\end{proof}

This lemma gives a characterization of cyclic abelian groups in terms of their corresponding duchain complex. To further extract a characterization of paracyclic abelian groups, we require the following basic facts:

\begin{lemma}\label{abliangroupcalculation}
Given abelian groups $A,B \in \mathbf{Ab}$ and homomorphisms $f:A \to B$ and $g:B \to A$, we have the following:
\begin{enumerate}[$(1)$]
\item The map $\id_A -g \circ f:A \to A$ is injective if and only if the map $\id_B - f \circ g:B \to B$ is injective.
\item The map $\id_A -g \circ f:A \to A$ is surjective if and only if $\id_B - f \circ g:B \to B$ is surjective.
\end{enumerate}
\end{lemma}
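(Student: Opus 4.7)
The plan is to prove each biconditional by establishing a single implication and invoking the evident symmetry between the roles of $f$ and $g$. The underlying idea, a version of Jacobson's trick, is that $f$ and $g$ themselves provide the translation between data witnessing properties of $\id_A - g \circ f$ on $A$ and those of $\id_B - f \circ g$ on $B$.

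For (1), I would show that $f$ restricts to a bijection $\ker(\id_A - g \circ f) \to \ker(\id_B - f \circ g)$, with inverse given by the analogous restriction of $g$. Indeed, if $a = g(f(a))$, then applying $f$ yields $f(a) = f(g(f(a))) = (f \circ g)(f(a))$, so $f(a) \in \ker(\id_B - f \circ g)$, and the relation $a = g(f(a))$ recovers $a$ from $f(a)$. The symmetric argument shows that $g$ maps $\ker(\id_B - f \circ g)$ into $\ker(\id_A - g \circ f)$ and is a two-sided inverse of the above restriction of $f$. In particular one kernel is trivial if and only if the other is.

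For (2), assume $\id_A - g \circ f$ is surjective and fix $b \in B$. Choose $a \in A$ with $a - g(f(a)) = g(b)$ and set $b' = b + f(a)$. A direct computation gives
\begin{equation}
\nonumber
(\id_B - f \circ g)(b') = b + f(a) - f(g(b)) - f(g(f(a))) = b + f\bigl(a - g(f(a)) - g(b)\bigr) = b,
\end{equation}
so $\id_B - f \circ g$ is surjective; the converse follows by swapping the roles of $(A,f)$ and $(B,g)$.

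No serious obstacle is expected: the argument uses nothing beyond the abelian group axioms and a short algebraic manipulation. In fact the same proof goes through verbatim for any pair of morphisms in an additive category, which is the natural level of generality should the lemma later be invoked beyond $\mathbf{Ab}$.
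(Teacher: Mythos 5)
Your proof is correct and follows essentially the same argument as the paper: part (2) is the identical computation (solve $a - gf(a) = g(b)$ and check $b + f(a)$ is a preimage of $b$), and part (1) uses the same manipulation, only packaged as the slightly stronger statement that $f$ and $g$ restrict to mutually inverse bijections $\ker(\id_A - g\circ f) \leftrightarrow \ker(\id_B - f\circ g)$, whereas the paper argues injectivity directly. The only caveat is the closing aside: ``surjective'' is not meaningful verbatim in a general additive category, so that remark would need to be rephrased in terms of kernels/cokernels or restricted to abelian categories, but this does not affect the proof of the lemma as stated.
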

\begin{proof}
By symmetry, it suffices to prove one of the implications in each case. For the proof of (1) we assume that $\id_A-g \circ f$ is injective. Let $b \in B$ be such that $b-fg(b)=0$. Applying $g$ to this equation, we obtain $g(b)-gfg(b)=0$, so that $g(b)=0$ by our assumption on the injectivity of $\id_A-g \circ f$. It follows that $b=0$, so $\id_B- f \circ g$ is also injective.

For the proof of (2) we assume that $\id_A-g \circ f$ is surjective. For an arbitrary $b \in B$ we must therefore have $g(b)=a-gf(a)$ for some $a \in A$. We calculate
\begin{equation}
\nonumber
(b+f(a)) - (fg(b) +fgf(a))=b+fg(b)+fgf(a)-fg(b)-fgf(a)=b,
\end{equation}
so that $\id_B-f \circ g$ is surjective as well.
\end{proof}
\begin{corollary}
For a duplicial abelian group $A_{\bullet}$ the following are equivalent:
\begin{enumerate}[$(1)$]
\item $A_{\bullet}$ descends to a paracyclic object, i.e. the shift $T_n:A_n \to A_n$ is an isomorphism for each $n \geq 0$.
\item For the duchain complex $\C(A_{\bullet})$ the morphism $\id_{\C(A_{\bullet})_n}-d \circ \delta$ is an isomorphism for each $n \geq 0$.
\item For the duchain complex $\C(A_{\bullet})$ the morphism $\id_{\C(A_{\bullet})_n}-\delta \circ d$ is an isomorphism for each $n \geq 1$.
\end{enumerate}
In particular, the normalized chains functor $\C:\mathbf{Ab}_{\Xi} \to \mathbf{Ch}_{\geq 0}(\mathbf{Ab})$ restricts to an equivalence between the category of paracyclic abelian groups and the category of duchain complexes satisfying the conditions in $(2)$ and $(3)$.
\end{corollary}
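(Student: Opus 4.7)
The plan is to reduce the corollary to the combination of Lemma~\ref{cyclicequation} and Lemma~\ref{abliangroupcalculation}, exploiting the elementary algebraic observation that on each $\C(A_\bullet)_n$ the endomorphisms $d \circ \delta$ and $\delta \circ d$ commute. Indeed, both mixed iterated compositions $(d\delta)(\delta d)$ and $(\delta d)(d\delta)$ vanish since $d^2 = 0 = \delta^2$, so the endomorphisms $\id - d \circ \delta$ and $\id - \delta \circ d$ commute as well.

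First I would establish the equivalence $(2) \Leftrightarrow (3)$. Applying Lemma~\ref{abliangroupcalculation} to the homomorphisms $\delta \colon \C(A_\bullet)_n \to \C(A_\bullet)_{n+1}$ and $d \colon \C(A_\bullet)_{n+1} \to \C(A_\bullet)_n$ shows that $\id - d \circ \delta$ on $\C(A_\bullet)_n$ is both injective and surjective, hence an isomorphism, if and only if $\id - \delta \circ d$ on $\C(A_\bullet)_{n+1}$ is. Letting $n$ range over all nonnegative integers yields the claimed equivalence.

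Next I would tackle $(1) \Leftrightarrow (2)\,\&\,(3)$ by means of Lemma~\ref{cyclicequation}. Since $T_n$ commutes with all its powers, invertibility of $T_n$ is equivalent to invertibility of $T_n^{n+1}$; and by the Dwyer-Kan equivalence of categories from Section~\ref{classicaldwyersection}, the natural transformation formed by the iterated shifts on $A_\bullet$ transfers to one on $\C(A_\bullet)_\bullet$, so $(1)$ is equivalent to invertibility of the induced endomorphism $T_n^{n+1}$ of $\C(A_\bullet)_n$ for every $n \geq 0$. By Lemma~\ref{cyclicequation} this endomorphism is the commuting product
\begin{equation}
\nonumber
T_n^{n+1} = (\id - d \circ \delta)^{n+1} \circ (\id - \delta \circ d)^n.
\end{equation}
If $(2)$ and $(3)$ hold, the product is manifestly invertible. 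Conversely, if the product is invertible, then since the two factors commute each is individually invertible (if $AB = BA$ has two-sided inverse $C$, then $A$ has two-sided inverse $BC = CB$), and invertibility of a power of a commuting endomorphism implies invertibility of the endomorphism itself; this recovers $(2)$ for every $n \geq 0$ and $(3)$ for every $n \geq 1$.

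The final \emph{in particular} statement is then an immediate consequence of the Dwyer-Kan correspondence: paracyclic abelian groups form precisely the full subcategory of $\mathbf{Ab}_\Xi$ on which every shift $T_n$ is invertible, and under the equivalence $\C$ this transports to the full subcategory of $\mathbf{DuCh}_{\geq 0}(\mathbf{Ab})$ cut out by conditions $(2)$ and $(3)$. No serious obstacle is anticipated; the only non-routine point is the algebraic passage from invertibility of a commuting product, and of a power, down to invertibility of the individual factors, which is what the commutativity of $\id - d \circ \delta$ with $\id - \delta \circ d$ is there to secure.
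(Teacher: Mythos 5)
Your proof is correct, and it coincides with the paper's for the equivalence $(2) \Leftrightarrow (3)$ (one application of Lemma~\ref{abliangroupcalculation}) and for the implication $(2)\,\&\,(3) \Rightarrow (1)$ (invertibility of the product in the formula of Lemma~\ref{cyclicequation}, then passing from $T_n^{n+1}$ to $T_n$). Where you genuinely diverge is the implication $(1) \Rightarrow (2)\,\&\,(3)$: you observe that $d\circ\delta$ and $\delta\circ d$ commute on $\C(A_\bullet)_n$ --- both mixed products vanish because $d^2=\delta^2=0$ --- so that invertibility of the commuting product $(\id-d\circ\delta)^{n+1}\circ(\id-\delta\circ d)^n$ forces invertibility of each factor, hence of $\id-d\circ\delta$ for all $n\geq 0$ and of $\id-\delta\circ d$ for all $n\geq 1$, directly and without any further transfer between degrees. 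The paper instead splits invertibility into injectivity and surjectivity: from the composite it extracts only injectivity of the first-applied factor $(\id-\delta\circ d)^n$ and surjectivity of the last-applied factor $(\id-d\circ\delta)^{n+1}$, and then invokes Lemma~\ref{abliangroupcalculation} a second time to shuttle these half-properties between adjacent degrees and assemble full isomorphisms. Your route buys a shorter and arguably cleaner argument at the cost of two extra (but elementary and correctly justified) algebraic facts --- that each factor of an invertible product of commuting maps is invertible, and that invertibility of a positive power implies invertibility of the base --- together with the commutativity observation itself, which is specific to the duchain setting where both squares of the differentials vanish. Both arguments are valid; the remaining steps (reduction of $(1)$ to invertibility of the induced $T_n^{n+1}$ on $\C(A_\bullet)$ via the equivalence of categories, and the concluding \emph{in particular} statement) match the paper.
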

\begin{proof}
It follows from Lemma \ref{abliangroupcalculation} that conditions $(2)$ and $(3)$ are equivalent. By Lemma \ref{cyclicequation} we have, for each $n \geq 0$, the equality
\begin{equation}
\nonumber
T_n^{n+1}=(\id_{\C(A_{\bullet})_n}-d \circ \delta)^{n+1} \circ (\id_{\C(A_{\bullet})_n}- \delta \circ d)^n.
\end{equation}
Therefore, $(2)$ and $(3)$ together imply that $T_n^{n+1}$, and consequently $T_n$, is invertible for each $n \geq 0$. Conversely, $(1)$ implies that $\id_{\C(A_{\bullet})_n}- \delta \circ d$ is injective for each $n \geq 1$ and that $\id_{\C(A_{\bullet})_n} - d \circ \delta$ is surjective for each $n \geq 0$. By Lemma \ref{abliangroupcalculation} again, this implies conditions $(2)$ and $(3)$.
\end{proof}
\section{$\infty$-categorical preliminaries}

\subsection{Stable $\infty$-categories}

We now replace the category of abelian groups in our prior discussions with the $(\infty,2)$-category of stable $\infty$-categories. In order to do so, we recall some definitions and refer to \cite{Lu17} for a development of the theory. 

An $\infty$-category is a simplicial set $\mathcal{C} \in \mathbf{Set}_{\Delta}$ in which admit fillings for all inner horns. The category $\mathbf{Cat}_{\infty}$ is the full subcategory of $\mathbf{Set}_{\Delta}$ spanned by $\infty$-categories.

An $(\infty,2)$-category is a category enriched over $\mathbf{Cat}_{\infty}$. In particular, for any $2$-category $\mathbb{C}$ we obtain an $(\infty,2)$-category by applying the simplicial nerve functor to its morphism categories. We shall abuse notation by also referring to this $(\infty,2)$-category as $\mathbb{C}$.

For any two $\infty$-categories $\mathcal{C},\mathcal{D}$ the simplicial set of functors $\Fun(\mathcal{C},\mathcal{D})$ is an $\infty$-category, so that we may regard $\mathbf{Cat}_{\infty}$ as an $(\infty,2)$-category.

A stable $\infty$-category is an $\infty$-category $\mathcal{S}$ satisfying the following conditions:
\begin{itemize}
\item $\mathcal{S}$ admits a zero object $0$.
\item Every morphism $A \to B$ admits a cofiber, i.e. admits an extension to a coCartesian square
\begin{equation}\label{cofiber}
\begin{tikzcd}
A \arrow[r] \arrow[d] & B \arrow[d] \\
0 \arrow[r]           & C          
\end{tikzcd}
\end{equation}
and dually every morphism admits a fiber.
\item A square as in (\ref{cofiber}) is coCartesian if and only if it is Cartesian.
\end{itemize}

A functor $f:\mathcal{S} \to \mathcal{T}$ of stable $\infty$-categories is called exact if it preserves finite limits and colimits. We denote by $\mathbf{St} \subset \mathbf{Cat}_{\infty}$ the $(\infty,2)$-category with stable $\infty$-categories as objects and morphism $\infty$-categories spanned by exact functors.

\subsection{Grothendieck constructions}

\begin{definition}
The covariant Grothendieck construction $\chi(F)$ associated to a functor $F:C \to \mathbf{Cat}_{\infty}$ of ordinary categories is the simplicial set whose $n$-simplices are given by:
\begin{itemize}
\item An $n$-simplex $\sigma:[n] \to \N(C)$ in the nerve of $C$.
\item For every $\emptyset \neq I \subset [n]$ a functor $\Delta^{I} \to F(\sigma(\max(I))$
such that for $I \subset J$ the diagram
\begin{equation}\nonumber
\begin{tikzcd}
\Delta^{I} \arrow[r] \arrow[d] & F(\sigma(\max(I)) \arrow[d] \\
\Delta^{J} \arrow[r]           & F(\sigma(\max(J))          
\end{tikzcd}
\end{equation}
commutes.
\end{itemize}
This simplicial set comes equipped with a forgetful functor $\pi:\chi(F) \to \N(C)$, which is a coCartesian fibration. We regard $\chi (F)$ as a marked simplicial set by marking the coCartesian edges of this fibration. The Grothendieck construction thus extends to a functor
\begin{equation}
\nonumber
\Fun(C,\mathbf{Cat}_{\infty}) \to \mathbf{Set}_{\Delta/\N(C)}^{+}
\end{equation}
into the category of maps from marked simplicial sets to $\N(C)$.
\end{definition}
For a category $C$ the undercategories organize into a functor
\begin{equation}
\nonumber
C^{\mathrm{op}} \to \mathbf{Set}_{\Delta/\N(C)}^{+}, c \mapsto \N(C_{c/})^{\#}
\end{equation}
by marking all edges of the nerve of the overcategories. Hence, we obtain a functor
\begin{equation}
\nonumber
\Gamma:\mathbf{Set}_{\Delta/\N(C)}^{+} \to \Fun(C,\mathbf{Cat}_{\infty})
\end{equation}
mapping an object $X \in \mathbf{Set}_{\Delta/\N(C)}^{+}$ to the functor
\begin{equation}
\nonumber
\Gamma(X):C \to \mathbf{Cat}_{\infty}, c \mapsto \Fun_{\N(C)}(\N(C_{c/})^{\#},X).
\end{equation}

\begin{lemma}\label{grothendieckiso}
There is a natural transformation $\eta:\id \to \Gamma \circ \chi$ which is a levelwise categorical equivalence.
\end{lemma}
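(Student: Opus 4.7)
The plan is to construct $\eta$ explicitly as a ``constant coCartesian section'' map, and then to verify the equivalence claim by exploiting the initial object of $C_{c/}$.

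For each $c \in C$ and $x \in F(c)$, define $\eta_F(c)(x): \N(C_{c/})^{\#} \to \chi(F)$ over $\N(C)$ as follows. An $n$-simplex of $\N(C_{c/})$ is represented by a chain $c \to c_0 \to \cdots \to c_n$ in $C$; send it to the $n$-simplex of $\chi(F)$ lying over $c_0 \to \cdots \to c_n$ whose compatible collection of maps $\Delta^I \to F(c_{\max I})$ is constant at the object $F(c \to c_{\max I})(x)$. All edges produced in this way are coCartesian edges of $\chi(F)$, so markings are preserved. These assignments assemble into a functor $\eta_F(c): F(c) \to \Fun_{\N(C)}(\N(C_{c/})^{\#}, \chi(F))$, and naturality in $F$ and $c$ (the latter through the precomposition functor induced by $C_{c'/} \to C_{c/}$) is immediate from the functoriality of $F$ and $\chi$.

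Marked maps $\N(C_{c/})^{\#} \to \chi(F)$ over $\N(C)$ are precisely the coCartesian sections of the restricted coCartesian fibration $\chi(F)|_{\N(C_{c/})} \to \N(C_{c/})$. Since $C_{c/}$ has $\id_c$ as initial object, and the fiber of $\chi(F)|_{\N(C_{c/})}$ over $\id_c$ is canonically $F(c)$, a standard result from higher category theory on coCartesian fibrations over $\infty$-categories with an initial object (see \cite{Lu17}) implies that evaluation at $\id_c$ is a trivial Kan fibration from the $\infty$-category of coCartesian sections down to $F(c)$. By the explicit construction of $\eta_F(c)$, one has $\ev_{\id_c} \circ \eta_F(c) = \id_{F(c)}$, so $\eta_F(c)$ is a section of this trivial fibration and is therefore itself a categorical equivalence.

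The main obstacle lies in the verification that evaluation at an initial vertex really does produce a trivial Kan fibration for the $\infty$-category of coCartesian sections. Once this is granted, the remaining work is routine: checking that the constant-section prescription defines an honest map of marked simplicial sets compatible with all faces and degeneracies, and that naturality in $F$ and $c$ holds strictly rather than merely up to coherent homotopy. Both of these reduce to the functoriality of $F$ and the standard coherence properties of the Grothendieck construction.
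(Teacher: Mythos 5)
Your proposal is correct and follows essentially the same route as the paper: an explicit section $\eta_F(c)$ of evaluation at the initial vertex $\id_c$ of $C_{c/}$, together with the identification of marked maps $\N(C_{c/})^{\#} \to \chi(F)$ over $\N(C)$ with coCartesian sections (equivalently, left Kan extensions along $\Delta^0 \to \N(C_{c/})$), so that evaluation is a trivial Kan fibration by \cite[4.3.2.15]{Lu09}. The only cosmetic difference is that the paper writes the formula for $\eta_F(c)$ on all $n$-simplices of $F(c)$ at once (as $\Delta^I \hookrightarrow \Delta^n \xrightarrow{x} F(c) \to F(c_{\max I})$) rather than objectwise, which is what you would need anyway to make the ``constant section'' prescription a genuine map of simplicial sets.
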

\begin{proof}
For $F:C \to \mathbf{Cat}_{\infty}$ the component
\begin{equation}
\nonumber
\eta_F(c):F(c) \to \Gamma(\chi(F))(c)
\end{equation}
evaluated at $c \in C$ is induced by the map
\begin{equation}
\nonumber
F(c) \times \N(C_{c /}) \to \chi(F)
\end{equation}
which maps an $n$-simplex $(x,c \to c_0 \to c_1 \to \dots \to c_n)$ to the $n$-simplex of $\chi(F)$ given by the $n$-simplex $c_0 \to \dots \to c_n$ in $N(C)$ and for each $I \subset [n]$ the simplex
\begin{equation}
\nonumber
\begin{tikzcd}
\Delta^I \arrow[r, hook] & \Delta^n \arrow[r, "x"] & F(c) \arrow[r] & F(c_{\max I}).
\end{tikzcd}
\end{equation}
The functor $\eta_F(c)$ is a right inverse to the evaluation functor
\begin{equation}
\nonumber
\ev_c:\Fun_{\N(C)}^{\flat}(\N(C_{c/})^{\#},\chi(F)) \to F(c)
\end{equation}
which is given by evaluation at the initial vertex $\id:c \to c$ of the category $\N(C_{c/})$. The requirement that all the edges of $\N(C_{c/})^{\#}$ be mapped to coCartesian edges in $\chi(F)$ means that the functors in $\Fun_{\N(C)}^{\flat}(\N(C_{c/})^{\#},\chi(F))$ are precisely the left Kan extensions along the inclusion $\Delta^{0} \to \N(C_{c/})$. Therefore, $\ev_c$ is a trivial Kan fibration by \cite[4.3.2.15]{Lu09} and $\eta_F(c)$ is a categorical equivalence by two out of three.
\end{proof}

For any finite non-empty linearly ordered set $I$ we define a $2$-category $\mathbb{O}^I$ as follows:
\begin{itemize}
\item The set of objects of $\mathbb{O}^I$ is the set $I$.
\item For $i,j \in I$, the morphism category $\mathbb{O}^I(i,j)$ is the poset category of subsets $J \subset I$ with $\min J=i$ and $\max J=j$.
\item The composition law law is defined by taking the union of subsets:
\begin{equation}
\nonumber
\mathbb{O}^{I}(i,j) \times \mathbb{O}^I(j,k) \to \mathbb{O}^n(i,k), (J,J') \mapsto J \cup J'
\end{equation}
\end{itemize}
The various $2$-categories $\mathbb{O}^n=\mathbb{O}^{[n]}$ organize into a cosimplicial object in the category $\mathbf{Cat}_2$ of $2$-categories.
\begin{definition}
For a 2-category $\mathbb{C}$ the nerve $\N^{\mathrm{sc}}(\mathbb{C})$ of $\mathbb{C}$ is the simplicial set
\begin{equation}
\nonumber
\N^{\mathrm{sc}}(\mathbb{C})_n=\Fun(\mathbb{O}^n,\mathbb{C})
\end{equation}
obtained from the above cosimplicial object by composing $\Fun(-,\mathbb{C})$. It is naturally a scaled simplicial set with the set of thin $2$-simplices given by those natural transformation which are invertible.
\end{definition}
This lax version of the categorical nerve construction allows us to define a lax version of the $(\infty,1)$-categorical Grothendieck construction. For a nonempty linearly ordered finite set $I$ let $G(I)$ denote the $1$-category underlying the lax overcategory $((\mathbb{O}^{I})^{(-,\mathrm{op})})_{/ \max I}$,

i.e. $G(I)$ is the category with 
\begin{itemize}
\item objects given by morphisms $i \to \max I$ in $\mathbb{O}^{I}$,
\item morphisms from $\varphi:i \to \max I$ to $\psi:j \to \max I$ given by a $2$-commutative triangle
\begin{equation}
\nonumber
\begin{tikzcd}
i \arrow[dd, "f"'] \arrow[rr, "\varphi"] & {} \arrow[d, Rightarrow] & \max I \\
                                         & {}                       &        \\
j \arrow[rruu, "\psi"']                  &                          &       
\end{tikzcd}
\end{equation}
in $\mathbb{O}^I$.
\end{itemize}
\begin{definition}
Let $\mathbb{C}$ be a $2$-category, regarded as a $\mathbf{Cat}_{\infty}$-enriched category. For a $\mathbf{Cat}_{\infty}$-enriched functor
\begin{equation}
\nonumber
F:\mathbb{C} \to \mathbf{Cat}_{\infty}
\end{equation}
we introduce the lax Grothendieck construction $\bbchi(F)$ of $F$, which is a simplicial set equipped with a natural functor $\pi: \bbchi(F) \to \N^{\mathrm{sc}}(\mathbb{C})$: An $n$-simplex of $\bbchi(F)$ is given by
\begin{itemize}
\item  a functor of $2$-categories $\sigma:\mathbb{O}^n \to \mathbb{C}$ ,
\item for $\emptyset \neq I \subset [n]$ a functor
\begin{equation}
\nonumber
\N(G(I)) \to F(\sigma(\max I))
\end{equation}
such that the following condition is satisfied: For $I \subset J \subset [n]$ the diagram
\begin{equation}
\nonumber
\begin{tikzcd}
\N(\mathbb{O}^J(\max I,\max J)) \times \N(G(I)) \arrow[r] \arrow[d] & \N(G(J)) \arrow[d] \\
{\N(\mathbb{C}(\sigma(\max I), \sigma(\max J))) \times F(\sigma( \max I))} \arrow[r]           & F(\sigma( \max J) )               
\end{tikzcd}
\end{equation}
commutes.
\end{itemize}
\end{definition}
We again introduce an inverse construction. Given a $2$-category $\mathbb{C}$ and an object $c \in \mathbb{C}$ the nerve  $\N^{\mathrm{sc}} (\mathbb{C}_{c/})$ of the lax undercategory comes equipped with a marking by marking all edges which have an invertible $2$-morphism as part of their data. This construction yields a $\mathrm{Cat}_{\infty}$-enriched functor 
\begin{equation}
\nonumber
\mathbb{C}^{(\mathrm{op},-)} \to \mathrm{Set}^+_{\Delta/\N^{\mathrm{sc}}(\mathbb{C})},c \mapsto \N^{\mathrm{sc}}(\mathbb{C}_{c/})
\end{equation}
For $X \in \mathrm{Set}^+_{\Delta/\N^{\mathrm{sc}}(\mathbb{C})}$ and $c \in \mathbb{C}$ there is therefore a functorial marked simplicial set
\begin{equation}
\nonumber
\mathbb{\Gamma}(X)(c)=\mathrm{Map}^{\#}_{\N^{\mathrm{sc}}(\mathbb{C})}(\N^{\mathrm{sc}}(\mathbb{C}_{c/}),X)
\end{equation}

yielding finally a functor
\begin{equation}
\nonumber
\mathbb{\Gamma}:\mathrm{Set}^+_{\Delta/\N^{\mathrm{sc}}(\mathbb{C})} \to \Fun_{\mathrm{Set}^+_{\Delta}}(\mathbb{C},\mathrm{Set}^+_{\Delta}), X \mapsto \mathbb{\Gamma}(X)
\end{equation}
We record for future use the important properties of the lax Grothendieck construction, which are dual to the properties described in \cite[Section 3.3.3]{Dy21}.
\begin{lemma}\label{laxnonlaxlemma}
Let $\mathbb{C}$ be a $2$-category and $F:\mathbb{C} \to \mathbf{Cat}_{\infty}$ be a $\mathbf{Cat}_{\infty}$-enriched functor. Then:
\begin{enumerate}[$(1)$]
\item The lax Grothendieck construction $\pi:\bbchi(F) \to \N^{\mathrm{sc}}(\mathbb{C})$ is locally coCartesian and coCartesian over every thin $2$-simplex.
\item For a $\mathbf{Cat}$-enriched functor $\mathbb{D} \to \mathbb{C}$ we have
\begin{equation}
\nonumber
\bbchi(F) \times_{\N^{\mathrm{sc}}(\mathbb{C})}\N^{\mathrm{sc}}(\mathbb{D}) \cong \bbchi(F|\mathbb{D}).
\end{equation}
\item For a $2$-category $\mathbb{C}$ with discrete morphism categories restriction along the functor
\begin{equation}
\nonumber
G(I) \to I, (j \to \max I) \mapsto j
\end{equation}
induces a map $\chi(F) \to \bbchi(F)$ between the ordinary and the lax Grothendieck construction which is a fiberwise equivalence of coCartesian fibrations over $\N(\mathbb{C}) \cong \N^{\mathrm{sc}}(\mathbb{C})$.
\end{enumerate}

\end{lemma}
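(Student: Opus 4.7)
The plan is to establish the three parts in sequence, invoking the duality with the corresponding non-lax statements in \cite[Section 3.3.3]{Dy21}. Part (2) is a direct unwinding of definitions, part (3) reduces to a short cofinality argument, and part (1) carries the bulk of the work and will be the main obstacle.

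For (2), I would compare the two sides simplex by simplex. An $n$-simplex of $\bbchi(F) \times_{\N^{\mathrm{sc}}(\mathbb{C})} \N^{\mathrm{sc}}(\mathbb{D})$ is an $n$-simplex of $\bbchi(F)$ whose underlying $\mathbf{Cat}$-enriched functor $\sigma:\mathbb{O}^n \to \mathbb{C}$ factors through $\mathbb{D}$; together with the auxiliary functors $\N(G(I)) \to F(\sigma(\max I))$ satisfying the compatibility condition, this is tautologically the datum of an $n$-simplex of $\bbchi(F|\mathbb{D})$. Naturality in $n$ is immediate and the bijection assembles into the claimed isomorphism.

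For (3), I would first observe that when $\mathbb{C}$ has discrete morphism categories the scaled nerve $\N^{\mathrm{sc}}(\mathbb{C})$ coincides with the ordinary nerve $\N(\mathbb{C})$, since every $2$-simplex is then automatically thin. Next, I would verify that for every non-empty $I \subset [n]$ the fiber of $G(I) \to I$ over $j$ admits an initial object, realized by the two-element subset $\{j,\max I\}$; a Theorem A-type argument then shows that the induced map $\N(G(I)) \to \Delta^I$ is a categorical equivalence. Precomposition with these equivalences assembles functorially into the comparison map $\chi(F) \to \bbchi(F)$, and the same cofinality argument shows that the induced map on the fiber over each object of $\N(\mathbb{C})$ is a categorical equivalence of $\infty$-categories, giving the asserted fiberwise equivalence of coCartesian fibrations.

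For (1), the main obstacle, I would construct locally coCartesian lifts explicitly. Given an edge $\sigma: \mathbb{O}^1 \to \mathbb{C}$ encoding a $1$-morphism $f: c_0 \to c_1$ and a lift $x \in F(c_0)$ of its source, the desired locally coCartesian lift should be assembled from $x$, $F(f)(x)$, and the canonical morphism between them, repackaged as a functor on $\N(G([1]))$. Its universal property would be checked against a mapping space criterion dual to the one employed in \cite[Section 3.3.3]{Dy21}, once again leveraging the initial-object structure in the fibers of $G(I)$. To upgrade from local coCartesianness to coCartesianness over a thin $2$-simplex corresponding to an invertible $2$-morphism $\alpha: h \Rightarrow g \circ f$, the key input is that $F$ sends $\alpha$ to a natural isomorphism $F(h) \cong F(g) \circ F(f)$, which forces the composite of two consecutive local lifts to itself be locally coCartesian. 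The main difficulty is the bookkeeping of scaled edges and the combinatorics of the categories $G(I)$; packaging these verifications carefully, and appealing to (the dual of) \cite[4.3.2.15]{Lu09} to identify the relevant trivial Kan fibrations, should yield the full statement.
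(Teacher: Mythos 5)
The paper offers no proof of this lemma at all: it records the three statements as formal duals of the properties established in \cite[Section 3.3.3]{Dy21} and moves on. Your sketch is therefore necessarily more explicit than the source. Parts (1) and (2) of your outline are sound. Part (2) really is a tautological matching of simplices (with the small caveat that a simplex of the fiber product carries a chosen functor $\mathbb{O}^n\to\mathbb{D}$, not merely the property of factoring through $\mathbb{D}$), and for (1) the locally coCartesian lift over an edge $f:c_0\to c_1$ with source $x$ is exactly the simplex assembled from $x$, $F(f)(x)$ and the identity edge, with coCartesianness over thin $2$-simplices coming from the invertibility of the associated $2$-morphism, as you say.

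Part (3), however, contains a step that fails as stated. The functor $G(I)\to I$ is not an equivalence of categories once $|I|\geq 3$, so $\N(G(I))\to\Delta^I$ is \emph{not} a categorical equivalence. Concretely, in $G([2])$ the fiber over $0$ consists of the two objects $\{0,2\}$ and $\{0,1,2\}$; the required $2$-morphism lives in the poset $\mathbb{O}^{[2]}(0,2)$ and exists in only one direction, so these two objects are connected by a non-invertible morphism and are not isomorphic, whence $G([2])\to[2]$ is not full. Your Theorem-A/cofinality argument via the extremal object $\{j,\max I\}$ yields at best a weak homotopy equivalence of nerves, which does not make precomposition $\Fun(\Delta^I,F(c))\to\Fun(\N(G(I)),F(c))$ an equivalence. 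The fiberwise equivalence $F(c)\simeq \bbchi(F)_c$ is nevertheless true, but for a different reason: in a simplex of the fiber over $c$, the compatibility squares in the definition of $\bbchi(F)$ applied to the inclusions $\{j\}\subset J$ force each functor $\N(G(J))\to F(c)$ to be \emph{constant} along the fibers of $G(J)\to J$, because every object of the fiber over $j$ lies in the image of $\mathbb{O}^J(j,\max J)\times G(\{j\})\to G(J)$ and the constant $\sigma$ collapses the $\mathbb{O}^J(j,\max J)$-direction to $\id_{F(c)}$. It is this forced constancy, not a cofinality property of $G(I)\to I$ in isolation, that identifies the fiber with $F(c)$; your argument for (3) needs to be rerouted through these compatibility conditions.
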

We also have the following lax analogue to Lemma \ref{grothendieckiso}.
\begin{lemma}\label{laxgrothendieckiso}
For a $2$-category $\mathbb{C}$ there is a natural weak equivalence
\begin{equation}
\nonumber
\eta:\id \to \mathbb{\Gamma} \circ \bbchi
\end{equation}
of endofunctors of $\Fun(\mathbb{C},\mathrm{Cat}_{\infty})$.
\end{lemma}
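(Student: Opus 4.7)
The plan is to imitate the proof of Lemma \ref{grothendieckiso}, systematically replacing each construction by its lax counterpart. For a $\mathbf{Cat}_{\infty}$-enriched functor $F:\mathbb{C} \to \mathbf{Cat}_{\infty}$ and an object $c \in \mathbb{C}$, I would define the component $\eta_F(c): F(c) \to \mathbb{\Gamma}(\bbchi(F))(c)$ as the adjoint of a natural map
\begin{equation}
\nonumber
F(c) \times \N^{\mathrm{sc}}(\mathbb{C}_{c/}) \to \bbchi(F)
\end{equation}
sitting over $\N^{\mathrm{sc}}(\mathbb{C})$. Unwinding definitions, an $n$-simplex of $\N^{\mathrm{sc}}(\mathbb{C}_{c/})$ packages a $2$-functor $\sigma: \mathbb{O}^n \to \mathbb{C}$ together with a lax cone under $c$; combined with $x \in F(c)$, it produces for each $\emptyset \neq I \subset [n]$ the required functor $\N(G(I)) \to F(\sigma(\max I))$ by first applying $F$ to the chosen $1$-morphism $c \to \sigma(\max I)$ in order to transport $x$, and then post-composing with the cell structure encoded by $G(I)$.

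Next I would verify that $\eta_F(c)$ is a section of the evaluation functor $\ev_c: \mathbb{\Gamma}(\bbchi(F))(c) \to F(c)$ at the initial vertex $\id_c$ of $\N^{\mathrm{sc}}(\mathbb{C}_{c/})$. This is a direct computation, ultimately reducing to the identification of the fiber of $\pi: \bbchi(F) \to \N^{\mathrm{sc}}(\mathbb{C})$ over $c$ with $F(c)$, supplied by Lemma \ref{laxnonlaxlemma}(2) applied to the inclusion of a point.

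The heart of the argument is to show that $\ev_c$ is a trivial Kan fibration. The marking conditions defining $\mathrm{Map}^{\#}_{\N^{\mathrm{sc}}(\mathbb{C})}(\N^{\mathrm{sc}}(\mathbb{C}_{c/}), \bbchi(F))$, together with part $(1)$ of Lemma \ref{laxnonlaxlemma}, force the functors in this mapping space to send each marked edge of $\N^{\mathrm{sc}}(\mathbb{C}_{c/})$ to a locally coCartesian edge of $\pi$. Parallel to the non-lax case, this pins them down as left Kan extensions along the inclusion $\{\id_c\} \hookrightarrow \N^{\mathrm{sc}}(\mathbb{C}_{c/})$ of the initial vertex, so that $\ev_c$ is a trivial fibration by a scaled/marked analogue of \cite[4.3.2.15]{Lu09}. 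Two out of three then yields that $\eta_F(c)$ is a categorical equivalence.

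The main obstacle lies in this last step: one must either locate in the literature or establish directly the correct scaled/marked analogue of HTT 4.3.2.15, and confirm that the marking on $\N^{\mathrm{sc}}(\mathbb{C}_{c/})$ is precisely the one required in order to characterize the relevant functors as left Kan extensions from the initial vertex. The gap between locally coCartesian and globally coCartesian behaviour, already visible in Lemma \ref{laxnonlaxlemma}(1), is where the bulk of the technical work should be expected to concentrate.
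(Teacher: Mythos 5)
The paper offers no proof of this lemma at all: it is stated as ``the lax analogue'' of Lemma \ref{grothendieckiso}, with the burden implicitly deferred to the dual of the arguments in \cite[Section 3.3.3]{Dy21}. Your plan of mimicking the proof of Lemma \ref{grothendieckiso} with lax replacements is therefore exactly the approach the paper intends, and the construction of the component $\eta_F(c)$ via a map $F(c)\times \N^{\mathrm{sc}}(\mathbb{C}_{c/})\to\bbchi(F)$ over $\N^{\mathrm{sc}}(\mathbb{C})$, together with the verification that it is a section of $\ev_c$, is fine as far as it goes.

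However, the step you yourself flag as the main obstacle is a genuine gap, and moreover the way you phrase it is not quite right. In the non-lax case $\id_c$ is an initial object of $C_{c/}$, so ``left Kan extension along $\{\id_c\}\hookrightarrow \N(C_{c/})$'' is the correct characterization and \cite[4.3.2.15]{Lu09} applies directly. In the lax undercategory $\mathbb{C}_{c/}$ the object $\id_c$ is \emph{not} initial: a morphism from $\id_c$ to $\psi:c\to y$ is a pair consisting of a $1$-morphism $f:c\to y$ and a $2$-morphism $f\Rightarrow\psi$, so the comma category $\mathbb{C}_{c/}(\id_c,\psi)$ is the slice $\mathbb{C}(c,y)_{/\psi}$ rather than a point. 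One must instead observe that this slice has a terminal object $(\psi,\id_\psi)$, identify the relevant $\pi$-colimits over these comma categories with the values of locally coCartesian transport along the corresponding edges, and reconcile this with the fact that $\bbchi(F)\to\N^{\mathrm{sc}}(\mathbb{C})$ is only \emph{locally} coCartesian (coCartesian only over thin simplices, per Lemma \ref{laxnonlaxlemma}). This is precisely the technical content of the dual of \cite[Section 3.3.3]{Dy21}, and without carrying it out (or citing it precisely) the claim that $\ev_c$ is a trivial Kan fibration remains unproven. Your proposal correctly locates the difficulty but does not resolve it, so it is a plan rather than a proof.
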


\subsection{Adjunctions of $\infty$-categories}
Adjunctions of $\infty$-categories will play an important part, particularly in the analysis of $2$-duplicial stable $\infty$-categories, so we collect here the facts about them which we will use throughout.
%\begin{definition} Let $p:X \to S$ be an inner fibration of simplicial sets. An edge $f:x \to y$ in $X$ is called $p$-Cartesian if the induced map
%\begin{equation}
%\nonumber
%X_{/f} \to X_{_y} \times_{S_{/p(y)}} S_{/p(f)}
%\end{equation}
%of overcategories is a trivial Kan fibration.
%
%The inner fibration $p:X \to S$ is called a Cartesian fibration if, for every vertex $y \in X$ and every edge $g:\tilde{x} \to p(y)$ in $S$ there exists a $p$-Cartesian edge $f:x \to y$ in $X$ with $p(f)=g$.
%\end{definition}
%
%Dually one may define coCartesian edges and fibrations. The following is a useful weakening of the notion of Cartesian edges:
%\begin{definition}
%Let $p:X \to S$ be an inner fibration. An edge $e:x \to y$ in $X$ is called locally $p$-Cartesian if it is a $p'$-Cartesian edge of the fiber product $p':X \times_S \Delta^1 \to \Delta^1$.
%\end{definition}
%We can now define the notion of an adjunction of $\infty$-categories.
\begin{definition}
An adjunction between $\infty$-categories $\mathcal{C}$ and $\mathcal{D}$ consists of a map of simplicial sets $\pi:\mathcal{M} \to \Delta^1$ which is both a Cartesian and a coCartesian fibration and equivalences $h_0:\mathcal{C} \simeq \mathcal{M}_{\{0\}}$ and $h_1:\mathcal{D} \simeq \mathcal{M}_{\{1\}}$.
\end{definition}
From the datum of such an adjunction we may produce functors associated to the adjunction as follows: A functor $g:\mathcal{D} \to \mathcal{C}$ is associated to the adjunction $\pi:\mathcal{M} \to \Delta^1$ if there exists a functor $s:\mathcal{D} \times \Delta^1 \to \mathcal{M}$ lying over $\Delta^1$ such that $s|\mathcal{D} \times \{1\}=h_1$, $s|\mathcal{D} \times \{0\}=h_0 \circ g$ and $s|\{x\} \times \Delta^1$ is a $\pi$-Cartesian edge of $\mathcal{M}$ for every $x \in \mathcal{D}$.

Similarly, one can define when a functor $f:\mathcal{C} \to \mathcal{D}$ is associated to the adjunction $\mathcal{M} \to \Delta^1$. By \cite[5.2.1.4]{Lu09}, associated functors always exist. For any pair $f:\mathcal{C} \leftrightarrow \mathcal{D}:g$ of functors associated to an adjunction as above we write $f \dashv g$ and call $g$ a right adjoint of $f$ and $f$ a left adjoint of $g$.

\begin{definition}
A functor $f:\mathcal{C} \to \mathcal{D}$ of $\infty$-categories is said to admit a right adjoint if there exists an adjunction $\mathcal{M} \to \Delta^1$ with $\mathcal{M}_{\{0\}} \simeq \mathcal{C}$ and $\mathcal{M}_{\{1\}} \simeq \mathcal{D}$ such that $f$ is associated to that adjunction.
\end{definition}

For any functor $F:C \to \mathbf{Cat}_{\infty}$ of ordinary categories the covariant Grothendieck construction $\pi:\chi(F) \to C$ is a coCartesian fibration. We now give a characterization of the Cartesian edges of this construction:
\begin{lemma}
Let $F:C \to \mathbf{Cat}_{\infty}$ be a functor of ordinary categories. An edge $e:x \to y$ of the covariant Grothendieck construction $\pi:\chi(F) \to C$ lying over an edge $f:c \to d$ in $C$ consists of the element $x \in F(c)$ and an edge $e':F(f)(x) \to y$ in $F(d)$. Such an edge is $\pi$-Cartesian if and only if for every object $z \in F(c)$ the edge $e$ induces an isomorphism
\begin{equation}
\nonumber
\begin{tikzcd}
{\Map_{F(c)}(z,x)} \arrow[r, "F(f)"] & {\Map_{F(d)}(F(f)(z),F(f)(x))} \arrow[r, "e'"] & {\Map_{F(d)}(F(f)(z),y)}
\end{tikzcd}
\end{equation}
in the homotopy category of spaces $\mathcal{H}$.
\end{lemma}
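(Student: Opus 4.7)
The plan is to reduce the $\pi$-Cartesian condition on $e$ to a statement about mapping spaces in the fibers of $\pi$, and then identify those mapping spaces explicitly using the structure of the Grothendieck construction. First, I invoke the general criterion: for any inner fibration $\pi:\mathcal{M} \to S$, an edge $e:x \to y$ over $f:c \to d$ is $\pi$-Cartesian if and only if, for every object $z \in \mathcal{M}$ with $b=\pi(z)$, the square
\[
\begin{tikzcd}
\Map_{\chi(F)}(z,x) \arrow[r,"e_*"] \arrow[d,"\pi_*"'] & \Map_{\chi(F)}(z,y) \arrow[d,"\pi_*"] \\
\Map_{\N(C)}(b,c) \arrow[r,"f_*"] & \Map_{\N(C)}(b,d)
\end{tikzcd}
\]
is a homotopy pullback of spaces, cf.\ \cite[2.4.4.3]{Lu09}. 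Equivalently, for each $g:b \to c$ the map induced by $e$ between the fibers of the two vertical projections over $g$ and $f \circ g$ should be a weak equivalence in $\mathcal{H}$.

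Next, I compute these fibers. Since $\pi:\chi(F) \to \N(C)$ is the coCartesian fibration classifying $F$, the coCartesian lift of $g:b \to c$ starting at $z \in F(b)$ has target $F(g)(z) \in F(c)$. Mapping out of $z$ over $g$ therefore factors through this pushforward, giving a natural equivalence between the fiber of $\pi_*:\Map_{\chi(F)}(z,x) \to \Map_{\N(C)}(b,c)$ over $g$ and $\Map_{F(c)}(F(g)(z),x)$, and analogously the fiber of $\pi_*:\Map_{\chi(F)}(z,y) \to \Map_{\N(C)}(b,d)$ over $f \circ g$ is $\Map_{F(d)}(F(f)(F(g)(z)),y)$. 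Under these identifications, the map of fibers induced by $e$ is precisely the composite
\[
\Map_{F(c)}(F(g)(z),x) \xrightarrow{F(f)} \Map_{F(d)}(F(f)(F(g)(z)),F(f)(x)) \xrightarrow{e'_*} \Map_{F(d)}(F(f)(F(g)(z)),y).
\]

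Combining the two steps, $e$ is $\pi$-Cartesian if and only if this composite is a weak equivalence for every $z \in F(b)$ and every $g:b \to c$. As $(z,g)$ varies, the object $w := F(g)(z)$ ranges over all of $F(c)$ — already $b=c$ and $g=\id_c$ realizes every object $w \in F(c)$ — so this condition is equivalent to the one stated in the lemma: for every $w \in F(c)$, the composite $\Map_{F(c)}(w,x) \to \Map_{F(d)}(F(f)(w),F(f)(x)) \to \Map_{F(d)}(F(f)(w),y)$ is an isomorphism in $\mathcal{H}$.

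The principal obstacle is the mapping-space identification of the second paragraph; this is standard for coCartesian fibrations and I would either cite it from \cite[\S 2.4]{Lu09} or verify it by hand by unwinding the simplicial definition of $\chi(F)$ given earlier in the paper. Everything else is a straightforward translation between Cartesian-fiber pullback squares and fiberwise mapping-space equivalences.
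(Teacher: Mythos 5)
Your argument is correct, and it shares its engine with the paper's proof --- both ultimately rest on the criterion of \cite[2.4.4.3]{Lu09} together with the identification, via coCartesian pushforward, of the space of edges $z \to x$ lying over a fixed morphism of the base with a mapping space in the target fiber. The difference lies in how the homotopy-pullback condition of \cite[2.4.4.3]{Lu09} is tamed. The paper first reduces to the case $C=\Delta^1$: since $\pi$ is a coCartesian fibration, an edge is $\pi$-Cartesian if and only if it is locally $\pi$-Cartesian (the dual of \cite[5.2.2.4]{Lu09}), and over $\Delta^1$ the base mapping spaces are points, so the pullback square collapses immediately to the single equivalence $\Map_{F(0)}(z,x) \to \Map_{F(1)}(F(f)(z),y)$ for $z \in F(0)$ (the condition for $z$ over $1$ being vacuous). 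You instead stay over the arbitrary base, use that $\Map_{\N(C)}(b,c)$ is a discrete set to check the pullback condition fiberwise over each $g:b\to c$, compute each fiber as $\Map_{F(c)}(F(g)(z),x)$, and then observe that the family of conditions indexed by pairs $(z,g)$ collapses (by $F(f\circ g)=F(f)\circ F(g)$, and by taking $g=\id_c$) to the single family indexed by objects of $F(c)$. What the paper's route buys is that the mapping-space identification is needed only for the one edge $f$ over $\Delta^1$, at the cost of invoking the locally-Cartesian-implies-Cartesian lemma; what your route buys is avoiding that lemma, at the cost of the more general fiber computation, which --- as you rightly flag --- is the one step you must still pin down by citation or by unwinding the simplicial definition of $\chi(F)$. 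Neither gap is substantive; both identifications are standard for the Grothendieck construction.
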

\begin{proof}
Since $\pi:\chi(F) \to C$ is coCartesian, an edge is $\pi$-Cartesian if and only if it is locally $\pi$-Cartesian by the dual of \cite[5.2.2.4]{Lu09}, so that we may as well assume $C=\Delta^1$ and that $e:x \to y$ lies over $0 \to 1$. By \cite[2.4.4.3]{Lu09} the edge $e$ is $\pi$-Cartesian if and only if for every object $z \in F(0)$ there is a homotopy Cartesian square
\begin{equation}
\nonumber
\begin{tikzcd}
{\Map_{F(0)}(z,x)} \arrow[r] \arrow[d] & {\Map_{F(1)}(F(f)(z),y)} \arrow[d] \\
\Delta^0 \arrow[r]                     & \Delta^0                    
\end{tikzcd}
\end{equation}
where the top arrow is the composite from the assertion. This is clearly equivalent to the top arrow being an isomorphism in the homotopy category of spaces.
\end{proof}
The following lemma shows that the existence of an adjoint can be checked via the Grothendieck construction.
\begin{lemma}
A functor $f:\mathcal{C} \to \mathcal{D}$ of $\infty$-categories admits a right adjoint if and only if the Grothendieck construction $\pi:\chi(f) \to \Delta^1$ is (locally) Cartesian.
\end{lemma}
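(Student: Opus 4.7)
The plan is to prove both implications by leveraging the characterization of Cartesian edges of $\chi(f) \to \Delta^1$ established in the preceding lemma, together with the definition of adjunction in terms of bifibrations.

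For the direction $(\Leftarrow)$, I would first note that $\pi:\chi(f) \to \Delta^1$ is automatically a coCartesian fibration by construction of the Grothendieck construction, with fibers $\chi(f)_{\{0\}} \simeq \mathcal{C}$ and $\chi(f)_{\{1\}} \simeq \mathcal{D}$. If in addition $\pi$ is (locally) Cartesian, then it is both a Cartesian and a coCartesian fibration over $\Delta^1$ with these fibers, and so satisfies the definition of an adjunction. It remains to identify the coCartesian pushforward $\mathcal{C} \to \mathcal{D}$ with $f$ itself, but this is immediate from the definition of $\chi(f)$: the pushforward of $x \in \mathcal{C}$ along $0 \to 1$ is precisely $f(x)$. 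Hence $f$ admits a right adjoint, namely the Cartesian pullback $\mathcal{D} \to \mathcal{C}$.

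For the direction $(\Rightarrow)$, suppose $f \dashv g$. For each object $y \in \mathcal{D}$ the adjunction data provides a counit $\epsilon_y : f(g(y)) \to y$ with the property that, for every $z \in \mathcal{C}$, precomposition with $f$ and postcomposition with $\epsilon_y$ induces an equivalence
\[
\Map_{\mathcal{C}}(z,g(y)) \longrightarrow \Map_{\mathcal{D}}(f(z),y)
\]
in the homotopy category of spaces. I would then consider the edge $e$ of $\chi(f)$ lying over $0 \to 1$ whose data is the object $g(y) \in \mathcal{C}$ together with the edge $\epsilon_y : f(g(y)) \to y$ in $\mathcal{D}$. The preceding lemma then certifies $e$ as $\pi$-Cartesian. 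Since $y$ was arbitrary, a $\pi$-Cartesian edge targeting every object of $\mathcal{D}$ exists over the unique non-degenerate edge of $\Delta^1$, and $\pi$ is an inner fibration (being coCartesian), so $\pi$ is (locally) Cartesian.

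I do not expect a serious obstacle: the argument is essentially a formal translation through the definitions. The only points requiring care are (i) matching the coCartesian pushforward of $\chi(f)$ with the original functor $f$, which is forced by the construction of $\chi$, and (ii) being precise about how the unit/counit data of an $\infty$-categorical adjunction produces the required equivalence of mapping spaces that feeds into the criterion of the preceding lemma; this last point is the standard characterization of adjoints via representability of $\Map_{\mathcal{D}}(f(-),y)$.
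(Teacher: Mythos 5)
Your argument is correct, but it takes a different route from the paper: the paper disposes of this lemma with a one-line citation to the dual of \cite[5.2.2.5]{Lu09}, whereas you reprove that statement directly from the two neighbouring results. Concretely, for $(\Leftarrow)$ you observe that $\chi(f)\to\Delta^1$ is already a coCartesian fibration with fibres $\mathcal{C}$ and $\mathcal{D}$ and with $f$ as associated functor, so adding the Cartesian property makes it an adjunction in the sense of the paper's definition; for $(\Rightarrow)$ you feed the counit criterion (the paper's Proposition \ref{adjunctionviacounit}, i.e.\ the dual of \cite[5.2.2.8]{Lu09}) into the preceding lemma's characterization of $\pi$-Cartesian edges of $\chi(f)$ to produce a Cartesian lift over $0\to1$ ending at each $y\in\mathcal{D}$, which over $\Delta^1$ is all that is needed (and is why ``locally Cartesian'' and ``Cartesian'' coincide here). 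This is essentially the content of Lurie's proof, reassembled from ingredients the paper has already set up, so what your version buys is self-containedness at the cost of a small amount of bookkeeping. Two minor points to watch: Proposition \ref{adjunctionviacounit} is stated after this lemma in the paper, but since its proof is an independent citation there is no circularity in using it; and the identification of the coCartesian pushforward of $\chi(f)$ with $f$ itself, which you declare immediate, does deserve the one extra sentence exhibiting the map $\mathcal{C}\times\Delta^1\to\chi(f)$ that sends $(x,0\to1)$ to the edge $(x,\id_{f(x)})$, since the definition of ``associated functor'' asks for this map and not merely for the object-level formula.
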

\begin{proof}
This is an instance of the dual of \cite[5.2.2.5]{Lu09}.
\end{proof}
In addition to the definition of adjunctions via biCartesian fibrations, there is also a useful alternative characterization via the counit of an adjunction:
\begin{proposition}\label{adjunctionviacounit}
Let $f:\mathcal{C} \leftrightarrow \mathcal{D}:g$ be functors between $\infty$-categories. The following are equivalent:
\begin{enumerate}
\item There is an adjunction $f \dashv g$.
\item There exists a counit transformation $\epsilon:f \circ g \to \id_{\mathcal{D}}$ in $\Fun(\mathcal{D},\mathcal{D})$ such that for every $c \in \mathcal{C}$ and $d \in \mathcal{D}$ the induced map
\begin{equation}
\nonumber
\begin{tikzcd}
{\Map_{\mathcal{C}}(c,g(d))} \arrow[r] & {\Map_{\mathcal{D}}(f(c),f(g(d)))} \arrow[r, "\epsilon(d)"] & {\Map_{\mathcal{D}}(f(c),d)}
\end{tikzcd}
\end{equation}
is an isomorphism in the homotopy category of spaces $\mathcal{H}$.
\end{enumerate}
\end{proposition}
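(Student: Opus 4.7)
The plan is to leverage the preceding characterization of Cartesian edges of the Grothendieck construction $\chi(f)\to\Delta^1$, together with the fact that any adjunction $\pi:\mathcal{M}\to\Delta^1$ associated to $f$ is equivalent, as a coCartesian fibration over $\Delta^1$, to $\chi(f)\to\Delta^1$. This reduces the proposition to a question about when the (coCartesian) fibration $\chi(f)\to\Delta^1$ is additionally Cartesian and how to read off a counit from its Cartesian edges. Under this reduction, the objects of $\chi(f)$ lying over $1$ are identified with $\mathcal{D}$ and an edge of $\chi(f)$ over $0\to 1$ ending at $d\in\mathcal{D}$ corresponds to a pair $(x,e')$ with $x\in\mathcal{C}$ and $e':f(x)\to d$ in $\mathcal{D}$.

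For the implication (1)$\Rightarrow$(2), I would use that a biCartesian fibration admits a $\pi$-Cartesian lift of $0\to 1$ ending at each object $d\in\mathcal{D}$. Under the identification with $\chi(f)$, such a lift is a pair $(x_d,e'_d:f(x_d)\to d)$ whose Cartesianness, by the preceding lemma, means exactly that the induced map
\begin{equation}
\nonumber
\Map_{\mathcal{C}}(z,x_d)\longrightarrow\Map_{\mathcal{D}}(f(z),f(x_d))\longrightarrow\Map_{\mathcal{D}}(f(z),d)
\end{equation}
is an equivalence for all $z\in\mathcal{C}$. Since this mapping-space description characterizes $g(d)$ up to canonical equivalence, one obtains $x_d\simeq g(d)$, and the functoriality of Cartesian lifts (applied to the straightening of $\chi(f)$ into a functor of $\infty$-categories) assembles the edges $e'_d$ into the sought natural transformation $\epsilon:f\circ g\to\id_{\mathcal{D}}$.

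For the converse (2)$\Rightarrow$(1), I would build $\mathcal{M}:=\chi(f)$, which is coCartesian by the general Grothendieck construction. It remains only to verify that $\chi(f)\to\Delta^1$ is also Cartesian, for which it suffices, by the dual of the criterion used in the lemma characterizing adjoints via Grothendieck constructions, to exhibit a Cartesian lift over $0\to 1$ of each $d\in\mathcal{D}$. I take the edge corresponding to $(g(d),\epsilon(d):f(g(d))\to d)$; by the preceding lemma it is Cartesian precisely when the composite $\Map_{\mathcal{C}}(z,g(d))\to\Map_{\mathcal{D}}(f(z),d)$ factoring through $f$ and post-composition with $\epsilon(d)$ is an equivalence for every $z\in\mathcal{C}$, which is exactly the hypothesis in (2).

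The main obstacle will be the bookkeeping identifying the mapping-space map appearing in the Cartesian criterion of the previous lemma with the specific composite written in condition (2) of the proposition. Concretely, one has to trace how the coCartesian lift of $0\to 1$ associated to $f$ identifies $\Map_{\chi(f)}((z,0),(d,1))$ with $\Map_{\mathcal{D}}(f(z),d)$, so that post-composition with $\epsilon(d)$ in $\mathcal{D}$ matches the map induced by the candidate Cartesian edge; once this compatibility is established, both directions reduce to the Cartesian characterization lemma and the identification $\mathcal{M}\simeq\chi(f)$.
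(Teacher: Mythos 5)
Your proposal is correct in outline, but it takes a different route from the paper: the paper's entire proof is a citation to the dual of \cite[5.2.2.8]{Lu09}, whereas you assemble a direct argument from the two results proved immediately beforehand, namely the mapping-space criterion for $\pi$-Cartesian edges of $\chi(f)\to\Delta^1$ and the recognition of right adjoints via the (local) Cartesianness of that fibration. What your approach buys is self-containedness and a clean reduction of both implications to a single verification about Cartesian edges; the direction $(2)\Rightarrow(1)$ in particular becomes almost tautological once the edge $(g(d),\epsilon(d))$ is tested against the Cartesian criterion. What it costs is that the two ``assembly'' steps you gloss over are exactly where Lurie's proof does its real work: in $(1)\Rightarrow(2)$, promoting the objectwise Cartesian lifts $e'_d$ to a genuine edge $\epsilon$ of $\Fun(\mathcal{D},\mathcal{D})$ requires producing a map $\mathcal{D}\times\Delta^1\to\chi(f)$ witnessing $g$ as associated to the adjunction and then solving a lifting problem against the coCartesian witness of $f$ (the dual of \cite[5.2.2.7]{Lu09}); and in $(2)\Rightarrow(1)$ you should note that exhibiting Cartesian lifts at each $d$ only shows \emph{some} right adjoint exists --- to conclude that $g$ itself is associated to the adjunction you must use that $\epsilon$ is already a coherent natural transformation to build the required section $\mathcal{D}\times\Delta^1\to\chi(f)$ through the edges $(g(d),\epsilon(d))$. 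Neither point is a gap in the strategy, but spelling them out is necessary before the citation to Lurie can genuinely be dispensed with.
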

\begin{proof}
Dual to \cite[5.2.2.8]{Lu09}.
\end{proof}
As an example of the application of this criterion we prove the composition law for adjoints:
\begin{proposition}
Suppose that the functors $f:\mathcal{C} \to \mathcal{D}$ and $f':\mathcal{D} \to \mathcal{E}$ of $\infty$-categories have right adjoints $g:\mathcal{D} \to \mathcal{C}$ and $g':\mathcal{E} \to \mathcal{D}$ respectively. Then $g \circ g'$ is right adjoint to $f' \circ f$. If $\epsilon:f \circ g \to \id_{\mathcal{D}}$ and $\epsilon':f' \circ g' \to \id_{\mathcal{E}}$ are counits demonstrating these adjunctions, then any composite
\begin{equation}
\nonumber
\begin{tikzcd}
f' \circ f \circ g \circ g' \arrow[r, "f' \circ \epsilon \circ g'"] \arrow[rd] & f' \circ g' \arrow[d, "\epsilon'"] \\
                                                                               & \id_{\mathcal{E}}                 
\end{tikzcd}
\end{equation}
in $\Fun(\mathcal{E},\mathcal{E})$ is a counit for this adjunction.
\end{proposition}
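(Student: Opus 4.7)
The plan is to apply the counit criterion provided by Proposition \ref{adjunctionviacounit} directly. Fix objects $c \in \mathcal{C}$ and $e \in \mathcal{E}$, and consider the map in the homotopy category $\mathcal{H}$ induced by the proposed counit $\epsilon' \circ (f' \epsilon g')$:
\begin{equation}
\nonumber
\Phi : \Map_{\mathcal{C}}(c, g(g'(e))) \longrightarrow \Map_{\mathcal{E}}(f'(f(c)), e).
\end{equation}
Unwinding the definition, $\Phi$ sends a morphism $\alpha:c \to g(g'(e))$ to the composite $\epsilon'(e) \circ f'(\epsilon(g'(e))) \circ f'(f(\alpha))$. I would show that $\Phi$ factors as
\begin{equation}
\nonumber
\Map_{\mathcal{C}}(c,g(g'(e))) \xrightarrow{\Phi_1} \Map_{\mathcal{D}}(f(c),g'(e)) \xrightarrow{\Phi_2} \Map_{\mathcal{E}}(f'(f(c)),e),
\end{equation}
where $\Phi_1$ is the map induced by $f$ followed by postcomposition with $\epsilon(g'(e))$, and $\Phi_2$ is the map induced by $f'$ followed by postcomposition with $\epsilon'(e)$. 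This factorization amounts to the fact that in $\Fun(\mathcal{E},\mathcal{E})$ the composite of natural transformations can be computed in either order, a purely formal $2$-categorical identity that I would verify by inspecting the homotopy commutative diagram in $\mathcal{H}$ obtained from applying $f'$ to the horizontal composite with $\epsilon$.

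Once the factorization is established, the proof is complete: by the hypothesis that $\epsilon$ is a counit for $f \dashv g$, applying Proposition \ref{adjunctionviacounit} with the pair $(f(c),g'(e))$ shows that $\Phi_1$ is an isomorphism in $\mathcal{H}$. Similarly, the counit property of $\epsilon'$ for $f' \dashv g'$, applied to $(f(c),e)$, shows that $\Phi_2$ is an isomorphism. Hence $\Phi = \Phi_2 \circ \Phi_1$ is an isomorphism in $\mathcal{H}$, and Proposition \ref{adjunctionviacounit} then yields the adjunction $f' \circ f \dashv g \circ g'$ with the indicated counit.

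The only step requiring genuine care is verifying the factorization of $\Phi$ through $\Map_{\mathcal{D}}(f(c), g'(e))$. The essential input is naturality of $\epsilon$: for any $\beta: f(c) \to g'(e)$, the square expressing $f'(\beta) \circ f'(\epsilon(g'(e))) \circ f'(f(g(\beta)))$ against $f'(\epsilon(g'(e))) \circ f'(f(g(g'(e))))$ composed with $f'(f(\alpha))$ commutes up to homotopy. I expect this to be straightforward once spelled out, but it is the only nontrivial verification: everything else is a direct application of the counit criterion to each of the two given adjunctions.
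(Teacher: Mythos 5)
Your proposal is correct and is essentially the paper's own argument: the paper's commutative diagram in $\mathcal{H}$ is exactly your factorization $\Phi=\Phi_2\circ\Phi_1$ through $\Map_{\mathcal{D}}(f(c),g'(e))$, with the two diagonal arrows being your $\Phi_1$ and $\Phi_2$, each an isomorphism by the counit criterion of Proposition \ref{adjunctionviacounit}. The only minor point is that the commutativity you need is just functoriality of $f'$ on mapping spaces (post-composition commutes with applying $f'$), rather than naturality of $\epsilon$ as you suggest at the end.
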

\begin{proof}
Let $\epsilon:f \circ g \to \id_{\mathcal{D}}$ and $\epsilon': f' \circ g' \to \id_{\mathcal{E}}$ be the counits demonstrating the given adjunctions. For $c \in \mathcal{C}$ and $e \in \mathcal{E}$ we have the following commutative diagram in the homotopy category of spaces $\mathcal{H}$:
\begin{equation}
\nonumber
\begin{tikzcd}
{\Map_{\mathcal{C}}(c,gg'(e))} \arrow[rd, "\simeq"'] \arrow[r, "f"] & {\Map_{\mathcal{D}}(f(c),fgg'(e))} \arrow[r, "f'"] \arrow[d, "\epsilon(g'(e))"]  & {\Map_{\mathcal{E}}(f'f(c),f'fgg'(e))} \arrow[d, "f'(\epsilon(g'(e))"] \\
                                                                    & {\Map_{\mathcal{D}}(f(c),g'(e))} \arrow[rd, "\simeq"'] \arrow[r] \arrow[r, "f'"] & {\Map_{\mathcal{E}}(f'f(c),f'g'(e))} \arrow[d, "\epsilon'(e)"]        \\
                                                                    &                                                                                  & {\Map_{\mathcal{E}}(f'f(c),e)}                                       
\end{tikzcd}
\end{equation}
The two diagonal morphisms are isomorphisms, since $\epsilon$ and $\epsilon'$ are counits. By tracing the diagram along the other side we see that the claimed counit fulfills the condition of Proposition \ref{adjunctionviacounit}.
\end{proof}
We conclude with a proposition which characterizes equivalences of cocartesian fibrations and shows that adjunctions behave well under these.
\begin{proposition}\label{fiberwiseeqlemma}
Given a commutative diagram of $\infty$-categories
\begin{equation}\nonumber
\begin{tikzcd}
\mathcal{C} \arrow[d, "U"'] \arrow[r, "F"] & \mathcal{C}' \arrow[d, "U'"] \\
\mathcal{D} \arrow[r, "\overline{F}"]           & \mathcal{D}'                
\end{tikzcd}
\end{equation}
such that $U$ and $U'$ are coCartesian fibrations and $\overline{F}$ is a weak equivalence, Then $F$ is an equivalence of $\infty$-categories if and only if it is a fiberwise equivalence of $\infty$-categories and preserves coCartesian edges. If these conditions are satisfied and $U$ is a also a Cartesian fibration, then $U'$ is also a Cartesian fibration and $F$ preserves Cartesian edges.
\end{proposition}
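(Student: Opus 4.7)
The plan is to split the proof into three parts: the \emph{only if} direction, the \emph{if} direction, and the final Cartesian statement.

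For the \emph{only if} direction, suppose $F$ is an equivalence. Fiberwise equivalence is obtained by taking fibers over $\overline{F}$-equivalent objects and using that equivalences are preserved by homotopy pullback along the inclusion $\{d\} \hookrightarrow \mathcal{D}$ (with $\overline{F}$ a weak equivalence). Preservation of coCartesian edges follows because coCartesian edges can be characterized by a universal mapping-space property (cf.\ \cite[2.4.1.10]{Lu09}), and this property is preserved by any equivalence of $\infty$-categories.

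For the \emph{if} direction, the plan is to reduce to the case $\mathcal{D} = \mathcal{D}'$ and $\overline{F} = \id_\mathcal{D}$. Form the pullback $\mathcal{C}'' := \mathcal{C}' \times_{\mathcal{D}'} \mathcal{D}$ along $\overline{F}$. Since $U'$ is a coCartesian (hence categorical) fibration and $\overline{F}$ is a weak equivalence, the projection $\mathcal{C}'' \to \mathcal{C}'$ is a categorical equivalence. The induced factorization $\mathcal{C} \to \mathcal{C}''$ over $\mathcal{D}$ is then a fiberwise equivalence of coCartesian fibrations over a common base which preserves coCartesian edges, and by the two-out-of-three property it suffices to show this induced map is an equivalence. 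This reduced case is precisely \cite[3.3.1.5]{Lu09}.

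For the Cartesian statement, assume in addition that $U$ is Cartesian, so by the already-proved \emph{if} direction $F$ is an equivalence. Given any edge $e': d'_0 \to d'_1$ in $\mathcal{D}'$ and any $x' \in (\mathcal{C}')_{d'_1}$, I would use that $\overline{F}$ is an equivalence to lift $e'$ to an edge $e: d_0 \to d_1$ in $\mathcal{D}$ with $\overline{F}(e) \simeq e'$, and use the fiberwise equivalence $F_{d_1}$ to choose $x \in \mathcal{C}_{d_1}$ with $F(x) \simeq x'$. Take a $U$-Cartesian lift $\tilde{e}: \tilde{x} \to x$ of $e$ in $\mathcal{C}$. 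Since $F$ is an equivalence, it preserves Cartesian edges by the dual of the \emph{only if} argument, so $F(\tilde{e})$ is a $U'$-Cartesian edge over an edge equivalent to $e'$ with target equivalent to $x'$; transporting along these equivalences yields a genuine $U'$-Cartesian lift. The same construction shows that every $U$-Cartesian edge is sent to a $U'$-Cartesian edge.

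The principal obstacle is the fiberwise-equivalence criterion used after the reduction, which I propose to import from \cite[3.3.1.5]{Lu09}; a self-contained proof would require either the straightening equivalence or a direct marked-anodyne lifting argument, both of which go beyond the scope of this preliminary section.
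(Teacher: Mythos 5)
Your proof is correct, but it takes a genuinely different route from the paper, whose entire proof is the single line ``This is an instance of \cite[Tag 023M]{Lu23} and its dual.'' Where the paper outsources everything to Kerodon, you actually carry out the reduction: pulling back $U'$ along $\overline{F}$ (legitimate, since $\overline{F}$ is an equivalence between fibrant objects and $U'$ is a categorical fibration, so $\mathcal{C}'\times_{\mathcal{D}'}\mathcal{D}\to\mathcal{C}'$ is a categorical equivalence), then invoking the fixed-base fiberwise criterion \cite[3.3.1.5]{Lu09}; the ``only if'' direction via the homotopy-Cartesian mapping-space characterization of coCartesian edges is also sound, though the reference you want for that characterization is \cite[2.4.4.3]{Lu09} rather than 2.4.1.10. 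What your approach buys is a proof resting only on HTT, at the cost of some unavoidable bookkeeping in the final Cartesian statement: the step you summarize as ``transporting along these equivalences'' is where the real (routine) work sits --- one must lift the equivalences $\overline{F}(d_1)\simeq d_1'$ and $F(x)\simeq x'$ through the categorical fibration $U'$, use that composites of Cartesian edges with equivalences are Cartesian, and then rectify an edge over something merely homotopic to $e'$ into an honest Cartesian lift of $e'$ with target exactly $x'$. That step deserves a sentence more than you give it, but it is standard and does not constitute a gap; given that the paper itself offers only a citation, your version is if anything more informative.
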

\begin{proof}
This is an instance of \cite[Tag 023M]{Lu23} and its dual.
\end{proof}

\section{The categorified Dold-Kan correspondence}\label{sectioncatdold}
Our approach in this section is a dualized version of the developments in \cite{Dy21}.

We denote by $\mathbf{St}_{\mathbb{\Delta}}$ the category of $\mathbf{Cat}_{\infty}$-enriched functors $\mathbb{\Delta}^{(\mathrm{op},-)} \to \mathbf{St}$.
\subsection{The categorified normalized chains functor}
The category of connective chain complexes of stable $\infty$-categories 
\begin{equation}
\nonumber
\mathbf{Ch}_{\geq 0}(\mathbf{St}) \subset \Fun(\mathbb{N}^{\mathrm{op}},\mathbf{St})
\end{equation}
is the full subcategory of functors $F:\mathbb{N}^{\mathrm{op}} \to \mathbf{St}$ of ordinary categories such that 
\begin{equation}
F(n+2 \to n) \in \Fun(F(n+2),F(n))
\nonumber
\end{equation}
is a zero object for each $n \in \mathbb{N}$.

We define the normalized chain complex functor
\begin{equation}\nonumber
\C:\mathbf{St}_{\mathbb{\Delta}} \to \mathbf{Ch}_{\geq 0}(\mathbf{St})
\end{equation}
in analogy to the case of abelian groups.
\begin{definition}
For a $2$-simplicial stable $\infty$-category $\mathcal{A}_{\bullet}$, let
\begin{equation}
\nonumber
\bar{\mathcal{A}}_n \subset \mathcal{A}_n
\end{equation}
denote the full subcategory spanned by objects $a \in \mathcal{A}_n$ such that $d_i(a)$ is a zero object in $\mathcal{A}_{n-1}$ for each $0 \leq i < n$.
It follows from the simplicial identities that the final face maps $d_n:\mathcal{A}_n \to \mathcal{A}_{n-1}$ induce maps between these subcategories.
The normalized chain complex associated to $\mathcal{A}_{\bullet}$ is the chain complex
\begin{equation}
\nonumber
\begin{tikzcd}
\bar{\mathcal{A}}_0 & \bar{\mathcal{A}}_1 \arrow[l, "d_1"'] & \bar{\mathcal{A}}_2 \arrow[l, "d_2"'] & \bar{\mathcal{A}}_3 \arrow[l, "d_3"'] & \cdots \arrow[l, "d_4"']
\end{tikzcd}
\end{equation}
This construction extends in an evident way to a functor $\C:\mathbf{St}_{\mathbb{\Delta}} \to \mathbf{Ch}_{\geq 0}(\mathbf{St})$.
\end{definition}

As it will become useful in the discussion of $2$-duplicial stable $\infty$-categories, we also give a categorification of the maps $\pi_n$ introduced in (\ref{pidef}).
\begin{definition}\label{cdef}
For $n \geq 0$ and a $2$-simplicial stable $\infty$-category $\mathcal{A}_{\bullet} \in \mathbf{St}_{\mathbb{\Delta}}$ we obtain from the cube $f_n:[1]^n \to \mathbb{\Delta}([n],[n])$
defined in (\ref{cube}) a cube
\begin{equation}
\nonumber
\begin{tikzcd}
\mathcal{A}_{\bullet} \circ f_n:(\Delta^1)^n  \arrow[r, "f_n"] & {\N(\mathbb{\Delta}([n],[n]))} \arrow[r, "\mathcal{A}_{\bullet}"] & {\Fun(\mathcal{A}_n,\mathcal{A}_n)}
\end{tikzcd}
\end{equation}
By taking first a left Kan extension and then a right Kan extension of this diagram, we may construct a cube
\begin{equation}
\nonumber
c_n:(\Delta^1)^{n+1} \to \Fun(\mathcal{A}_n,\mathcal{A}_n)
\end{equation}
satisfying the following conditions:
\begin{enumerate}
\item The restriction 
\begin{equation}
\nonumber
\begin{tikzcd}
(\Delta^1)^n \arrow[r, "\id \times \{1\}"] & (\Delta^1)^{n+1} \arrow[r, "c_n"] & {\Fun(\mathcal{A}_n,\mathcal{A}_n)}
\end{tikzcd}
\end{equation}
is given by $\mathcal{A}_{\bullet} \circ f_n$.
\item $c_n(a_0, \dots,a_{n-1},0) \simeq 0 \in \Fun(\mathcal{A}_n,\mathcal{A}_n)$ unless $a_i=0$ for all $0 \leq i < n$.
\item The cube $c_n$ is a limit diagram with limit vertex $(0, \dots,0)$.
\end{enumerate}
We denote by $\pi_n:\mathcal{A}_{n} \to \mathcal{A}_n$ the functor $c_n(0, \dots,0) \in \Fun(\mathcal{A}_n,\mathcal{A}_n)$ for such a choice of cube $c_n$. By construction, this functor is uniquely defined up to contractible choices.
\end{definition}
We proceed by giving a partial analogue of the results of Lemma \ref{piproperties}.
\begin{lemma}\label{piadj}
For $n \geq 1$ and a $2$-simplicial stable $\infty$-category $\mathcal{A}_{\bullet}$ let $\pi_n:\mathcal{A}_n \to \mathcal{A}_n$ be a functor defined as the limit vertex of a cube $c_n: (\Delta^1)^{n+1} \to \Fun(\mathcal{A}_n,\mathcal{A}_n)$ as above. Then:
\begin{enumerate}[$(1)$]
\item The functor $\pi_n$ takes values in the subcategory $\bar{\mathcal{A}}_n$.
\item Letting $\iota_n:\bar{\mathcal{A}}_n \to \mathcal{A}_n$ denote the inclusion, we have an adjunction $\iota_n \dashv \pi_n$ whose unit yields an equivalence $   id_{\bar{\mathcal{A}}_n} \simeq \pi_n \circ \iota_n$.
\end{enumerate}
\end{lemma}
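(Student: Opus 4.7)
The plan is to exploit that $c_n$ is, by construction, a limit diagram in the stable $\infty$-category $\Fun(\mathcal{A}_n, \mathcal{A}_n)$, so that its total fiber vanishes, in combination with the chain of adjunctions (\ref{simpadj}) among the simplicial structure functors of $\mathcal{A}_\bullet$. In both parts, the key tool will be an iterated fiber computation: in a stable $\infty$-category a cube of shape $(\Delta^1)^k$ is a limit diagram if and only if its total fiber vanishes, and this can be computed by taking fibers in one direction at a time.

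For (1), I would fix $0 \leq i < n$ and show $d_i \circ \pi_n \simeq 0$. Since $d_i$ is exact, post-composition with it is an exact functor $\Fun(\mathcal{A}_n,\mathcal{A}_n) \to \Fun(\mathcal{A}_n,\mathcal{A}_{n-1})$, so $d_i \circ c_n$ remains a limit cube. On the top face $\{a_n = 1\}$, for any pair of opposing vertices $v, w$ in the $i$-th direction we have $f_n(v) \circ \partial_i = f_n(w) \circ \partial_i$ in $\mathbb{\Delta}$, and because $\mathbb{\Delta}([n-1],[n])$ is a poset the $2$-morphism encoded by the corresponding cube edge collapses to the identity after whiskering with $\partial_i$. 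Hence every edge of $d_i \circ c_n$ in the $i$-th direction on the top face is an equivalence. On the bottom face $\{a_n = 0\}$, all values vanish except at the origin, where the value is $d_i \pi_n$. Taking fibers first in the $i$-th direction annihilates the entire top face (fibers of equivalences are zero) and leaves an $n$-cube whose only non-zero vertex is the origin with value $d_i \pi_n$; iterating fibers preserves this structure, so the total fiber equals $d_i \pi_n$. Since this total fiber must vanish, $d_i \pi_n \simeq 0$.

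For (2), I would take as candidate counit $\epsilon : \iota_n \pi_n \to \id_{\mathcal{A}_n}$ the natural transformation associated with the last-direction edge $c_n(0,\dots,0,0) \to c_n(0,\dots,0,1) = \mathcal{A}_\bullet(\id_{[n]})$ of the cube, using (1) to factor $\pi_n$ through $\iota_n$. By Proposition \ref{adjunctionviacounit} it suffices to show that for every $c \in \bar{\mathcal{A}}_n$ and $d \in \mathcal{A}_n$ the composite map $\Map(c, \pi_n(d)) \to \Map(c, d)$ is an equivalence in the homotopy category. Applying the limit-preserving functor $\Map(c, -)$ to the limit cube $c_n(-)(d)$ yields a limit cube of spectra. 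The crucial claim is that every vertex of this cube of spectra other than $(0,\dots,0,0)$ and $(0,\dots,0,1)$ vanishes: non-origin bottom-face vertices are zero by definition of $c_n$, and for a top-face vertex $(v',1)$ with $v' \neq 0$ the epi-mono factorization $f_n(v') = \partial_I \circ \sigma_J$ in $\Delta$ has $\sigma_J$ non-trivial (since the convention $v'_n = 0$ together with a last $1$ in $v'$ at position $k$ forces a collapsed pair $(k, k+1)$); consequently $\mathcal{A}_\bullet(f_n(v')) = s_J \circ d_I$ is, read in the order of function application, outermostly a degeneracy $s_{j_1} : \mathcal{A}_{n-1} \to \mathcal{A}_n$ with $0 \leq j_1 \leq n-1$. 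The adjunction $d_{j_1} \dashv s_{j_1}$ from (\ref{simpadj}) together with $d_{j_1}(c) \simeq 0$ then forces $\Map(c, \mathcal{A}_\bullet(f_n(v'))(d)) \simeq 0$. An iterated fiber computation analogous to (1) identifies the total fiber of this cube of spectra with the fiber of $\Map(c, \pi_n(d)) \to \Map(c, d)$, and this must vanish, yielding the claimed equivalence.

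The unit $\id_{\bar{\mathcal{A}}_n} \to \pi_n \circ \iota_n$ is then automatically an equivalence because $\iota_n$, being the inclusion of a full subcategory, is fully faithful. The most delicate step I anticipate is the combinatorial identification of the outermost degeneracy in $\mathcal{A}_\bullet(f_n(v'))$ for $v' \neq 0$ and the verification that its index lies in $\{0, \dots, n-1\}$; this is precisely the bridge between the combinatorics of $\Delta$ and the defining vanishing condition of $\bar{\mathcal{A}}_n$, and it is what makes the adjunction chain (\ref{simpadj}) interact cleanly with the cube $c_n$.
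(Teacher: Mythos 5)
Your proposal is correct and follows essentially the same route as the paper: postcomposing the limit cube with the exact functor $d_i$ and using $f_n(v)\circ\partial_i=f_n(w)\circ\partial_i$ for part (1), then taking the counit from the last edge of $c_n$, applying a limit-preserving mapping functor, and killing all but two vertices via the factorization of $f_n(v')$ through a degeneracy $\sigma_k$ with $k\leq n-1$ and the adjunction $d_k\dashv s_k$ for part (2). The only cosmetic differences are your use of iterated total fibers in place of the paper's cofinality reduction, mapping spectra in place of the Yoneda embedding into presheaves of spaces, and deducing the unit equivalence from full faithfulness of $\iota_n$ rather than checking directly that the counit restricts to an equivalence on $\bar{\mathcal{A}}_n$.
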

\begin{proof}
For $0 \leq i < n$  the cube $d_i \circ c_n:(\Delta^1)^{n+1} \to \Fun(\mathcal{A}_n,\mathcal{A}_{n-1})$ induced by postcomposition with $d_i$ is also a limit cube due to the exactness of $d_i$. Let $Q^{t}_i \subset (\Delta^1)^{n+1}$ denote the face spanned by the vertices $(a_0, \dots,a_n)$ with $a_i=0$, such the initial vertex $(0, \dots,0)$ removed. Let similarly $Q^{b}_i \subset (\Delta^1)^{n+1}$ denote the face spanned by the vertices $(a_0, \dots,a_n)$ with $a_i=1$, with the vertex whose only nonzero entry is $a_i$ removed. By a standard cofinality argument, $d_i \circ c_n$ being a Cartesian cube is equivalent to the square
\begin{equation}
\nonumber
\begin{tikzcd}
d_i \circ \pi_n \arrow[r] \arrow[d]     & \lim d_i \circ c_n|Q_i^t \arrow[d]          \\
0 \arrow[r] & \lim d_i \circ c_n|Q_i^b
\end{tikzcd}
\end{equation}
being Cartesian. The right arrow in this diagram is an equivalence, because \begin{equation}
\nonumber
f_n(a_0, \dots,a_{n-1}) \circ \partial_i =f_n(b_0, \dots,b_{n-1}) \circ \partial_i
\end{equation}
 if $a_j=b_j$ for all $j \neq i$. Since the square is a limit square, we conclude $d_i \circ \pi_n \simeq 0$.

To prove $(2)$, we give a counit $\epsilon:\pi_n \to \id_{\mathcal{A}_n}$ and show that for each $x \in \bar{\mathcal{A}}_n$ and $y \in \mathcal{A}_n$ composition with $\epsilon(y)$ induces an isomorphism
\begin{equation}
\nonumber
\Map_{\mathcal{A}_n}(x,\pi_n(y)) \to \Map_{\mathcal{A}_n}(x,y)
\end{equation}
in the homotopy category of spaces $\mathcal{H}$.
The counit $\epsilon:\iota \circ \pi_n \to \id_{\mathcal{A}_n}$ is obtained as part of the datum contained in the cube $c_n$: It is the image of the edge $(0, \dots, 0,0) \to (0, \dots, 0,1)$ in $(\Delta^1)^{n+1}$ under $c_n$.

Note that $f_n(a)$ factors through some $\partial_i$ with $0 \leq i<n$ for any $a \neq (0, \dots,0)$, so that for an element $x \in \bar{\mathcal{A}}_n$ we have $f_n(a)^*(x) \simeq 0 \in \mathcal{A}_n$. It follows that the counit restricts to an equivalence $\iota_n^* \epsilon:\pi_n \circ \iota_n \to \id_{\bar{\mathcal{A}}_n}$, whose inverse is given by the unit.

Let $j:\mathcal{A}_n \to \mathcal{P}(\mathcal{A}_n)$ denote the Yoneda embedding of $\mathcal{A}_n$. By \cite[5.1.3.2]{Lu09}, this embedding preserves limits, so that for $y \in \mathcal{A}_n$ the cube
\begin{equation}
\nonumber
\begin{tikzcd}
(\Delta^{1})^{n+1} \arrow[r, "c_n"] & {\Fun(\mathcal{A}_n,\mathcal{A}_n)} \arrow[r, "\mathrm{ev}_y"] & \mathcal{A}_n \arrow[r, "j"] & \mathcal{P}(\mathcal{A}_n)
\end{tikzcd}
\end{equation}
is a limit cube of presheaves. In particular, we obtain a limit cube for any $x \in \bar{\mathcal{A}}_n$ by evaluating this cube of presheaves at $x$. We analyze the Kan complexes which are vertices of this cube:

For a zero object $0 \in \mathcal{A}_n$ the Kan complex $\Map_{\mathcal{A}_n}(x,0)$ is contractible by definition of a zero object.

For $a \neq (0, \dots,0)$ the map $f_n(a)$ factors through some $\sigma_i$ $(0 \leq i\leq n)$, so that we may write $f_n(a)^*y=s_i(z)$ for some $z \in \mathcal{A}_{n-1}$. By the adjunction $d_i \dashv s_i$, we then have the isomorphism
\begin{equation}
\nonumber
\Map_{\mathcal{A}_n}(x,f_n(a)^*(y))=\Map_{\mathcal{A}_n}(x,s_i(z)) \simeq \Map_{\mathcal{A}_n}(d_i(x),z)
\end{equation}
in the homotopy category of spaces. The Kan complex $\Map_{\mathcal{A}_n}(d_i(x),z)$ is contractible because $d_i(x) \in \mathcal{A}_{n-1}$ is a zero object by the assumption that $x \in \bar{\mathcal{A}}_n$.
We conclude that all the spaces appearing in the limit cube are contractible except for $\Map_{\mathcal{A}_n}(x,\pi_n(y))$ and $\Map_{\mathcal{A}_n}(x,y)$, hence the edge between them is an isomorphism in the homotopy category of spaces.
\end{proof}
\begin{remark}By exhibiting for $n \geq 0$ and a $2$-simplicial stable $\infty$-category $\mathcal{A}_{\bullet}$ a right adjoints of the inclusion $\bar{\mathcal{A}}_n \subset \mathcal{A}_n$, we have shown, in the language of \cite{DKSS21}, that the full stable subcategory $\bar{\mathcal{A}}_n \subset \mathcal{A}_n$ is right-admissable. It follows from Proposition 2.3.2 therein that the pair $(\bar{\mathcal{A}}_n^{\perp},\bar{\mathcal{A}}_n)$ is a semi-orthogonal decomposition of $\mathcal{A}_{n}$ and we have seen in the preceding proof that 
\begin{equation}
\nonumber
s_i(\mathcal{A}_{n-1}) \subset \bar{\mathcal{A}}_n^{\perp}=\{c \in \mathcal{A}_n|\Map_{\mathcal{A}_n}(a,c) \text{ is contractible for all } a \in \bar{\mathcal{A}}_n \}
\end{equation}
for $0 \leq i \leq n-1$.  This is a further categorification the classical decomposition 
\begin{equation}
\nonumber
A_{\bullet} \cong \C(A_{\bullet}) \oplus \D(A_{\bullet}).
\end{equation}
\end{remark}
\subsection{The categorified Dold-Kan nerve}
We now give a categorification of the Dold-Kan nerve construction, for which we introduce for each $[n] \in \mathbb{\Delta}$ the lax comma category $\mathbb{N}^{\mathrm{op}}_{[n]/}$:
\begin{itemize}
\item Objects of $\mathbb{N}^{\mathrm{op}}_{[n]/}$ are morphisms $\sigma:[m] \to [n]$ in $\mathbb{\Delta}$.
\item A morphism from $\sigma:[m] \to [n]$ to $\sigma':[m'] \to [n]$ is a 2-commutative triangle
\begin{equation}\nonumber
\begin{tikzcd}
 {[m]} \arrow[rrdd,"\sigma"'] &                          & {[m']} \arrow[dd,"\sigma'"] \arrow[ll, "f"'] \\
                               & {} \arrow[r, Rightarrow] & {}                                             \\
                               &                          & {[n]}
\end{tikzcd}
\end{equation}
where $f$ is a composite of final face maps $\partial_k:[k-1] \to [k]$.
\end{itemize}
The category $\mathbb{N}^{\mathrm{op}}_{[n]/}$ comes equipped with a forgetful functor $\mathbb{N}^{\mathrm{op}}_{[n]/} \to \mathbb{N}^{\mathrm{op}}$ and the categories assemble to form a functor of 2-categories
\begin{equation}\label{2functoriality}
\mathbb{\Delta} \to \mathbf{Cat}_{/\mathbb{N}^{\mathrm{op}}}, [n] \to \mathbb{N}^{\mathrm{op}}_{[n]/}
\end{equation}

Recall the definition of the cube $f_n:[1]^n \to \mathbb{\Delta}([n],[n])$ from (\ref{standardcube}). From this cube, we obtain a another cube $b_n:[1]^n \to \mathbb{\Delta}([n-1],[n])$ by precomposing $\partial_n$:
\begin{equation}
\nonumber
b_n(a_0, \dots,a_{n-1}):[n-1] \to [n], i \mapsto i+a_i
\end{equation}
The cubes $f_n$ and $b_n$ from the faces of the cube $q_n:[1]^{n+1} \to \mathbb{N}^{\mathrm{op}}_{[n]/}$:
\begin{equation}\label{cube}
q_n(a_0, \dots,a_n)=
\begin{cases}
f_n(a_0, \dots, a_{n-1}) \text{ for } a_{n}=0 \\
b_n(a_0, \dots,a_{n-1}) \text{ for } a_n=1
\end{cases}
\end{equation}
For instance, for $n=1$ the cube $q_1$ is given by
\begin{equation}
\nonumber
\begin{tikzcd}
01 \arrow[r] \arrow[d] & 0 \arrow[d] \\
11 \arrow[r]            & 1           
\end{tikzcd}
\end{equation}

\begin{definition}\label{diagramconditions} \label{catifieddoldkannerve}
The categorified Dold-Kan nerve $\N(\mathcal{B}_{\bullet})$ of a chain complex $\mathcal{B}_{\bullet} \in \mathbf{Ch}_{\geq 0}(\mathbf{St})$ is given in degree $n$ by the full subcategory
\begin{equation}
\nonumber
\N(\mathcal{B}_{\bullet})_n \subset \Fun_{\N(\mathbb{N}^{\mathrm{op}})}(\N(\mathbb{N}^{\mathrm{op}}_{[n]/}),\chi(\mathcal{B_{\bullet}}))
\end{equation}
spanned by diagrams $A:\N(\mathbb{N}^{\mathrm{op}}_{[n]/}) \to \chi(\mathcal{B}_{\bullet})$ satisfying the following conditions:
\begin{itemize}
\item For every degenerate simplex $\sigma:[m] \to [n]$ the object $A(\sigma)$ is a zero object in $\mathcal{B}_m$.
\item For every $m \geq 1$ and every nondegenerate $m$-simplex $\sigma:[m] \to [n]$ the cube
\begin{equation}
\nonumber
\begin{tikzcd}
(\Delta^1)^{m+1} \arrow[r, "q_m"] & {\mathbb{N}^{\mathrm{op}}_{[m]/}} \arrow[r, "\sigma \circ -"] & {\mathbb{N}^{\mathrm{op}}_{[n]/}} \arrow[r, "A"] & \chi(\mathcal{B}_{\bullet})
\end{tikzcd}
\end{equation}
is a $\pi$-colimit cube with colimit vertex $(1, \dots,1)$.
\end{itemize}
This construction extends to a functor of $(\infty,2)$-categories 
\begin{equation}\nonumber
\N(\mathcal{B}_{\bullet})_{\bullet}:\mathbb{\Delta}^{(\mathrm{op},-)} \to \mathbf{Cat}_{\infty}
\end{equation}
via the functoriality from (\ref{2functoriality}). By \cite[5.1.2.2]{Lu09}, the $\infty$-categories appearing in this construction are actually stable $\infty$-categories. The categorified Dold-Kan nerve therefore yields a functor
\begin{equation}
\nonumber
\N:\mathbf{Ch}_{\geq 0} (\mathbf{St}) \to \mathbf{St}_{\mathbb{\Delta}}.
\end{equation}
\end{definition}

We describe the cells of the categorified Dold-Kan nerve in low degrees:
\begin{itemize}
\item The datum of $0$-cell $A \in \N(\mathcal{B}_{\bullet})_0$ is given by an object $B_0 \in \mathcal{B}_0$.
\item The datum of a $1$-cell $A \in \N(\mathcal{B}_{\bullet})_1$ is given by a square
\begin{equation}
\nonumber
\begin{tikzcd}
A_{01} \arrow[d] \arrow[r] & A_0 \arrow[d] \\
A_{11} \arrow[r]           & A_1          
\end{tikzcd}
\end{equation}
in $\chi(\mathcal{B}_{\bullet})$ such that $A_{11}$ is a zero object in $\mathcal{B}_1$ and the induced square
\begin{equation}
\nonumber
\begin{tikzcd}
d(A_{01}) \arrow[d] \arrow[r] & A_0 \arrow[d] \\
d(A_{11}) \arrow[r]           & A_1          
\end{tikzcd}
\end{equation}
in $\mathcal{B}_0$ is biCartesian.
\end{itemize}

There are natural classes $\loc$ of weak equivalences in $\mathbf{St}_{\mathbb{\Delta}}$ and in $\mathbf{Ch}_{\geq 0} (\mathbf{St})$: In both cases these are given by those morphisms which are levelwise categorical equivalences. The rest of this section is occupied with the proof of the following theorem:
\begin{theorem}\label{catdoldkan}
The categorified normalized chain complex functor $\C$ and the categorified Dold-Kan nerve $\N$ induce an equivalence of $\infty$-categories
\begin{equation}
\nonumber
C:\loc \mathbf{St}_{\Delta} \leftrightarrow \loc \mathbf{Ch}_{\geq 0} (\mathbf{St}) :N
\end{equation}
between the $\infty$-categorical localizations at the classes of weak equivalences.
\end{theorem}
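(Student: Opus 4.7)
The plan is to construct an adjunction $\C \dashv \N$ of $\infty$-functors at the level of localizations and show that its unit and counit are levelwise weak equivalences. As a preliminary, both $\C$ and $\N$ preserve levelwise categorical equivalences and therefore descend to the stated localizations: for $\C$ this is immediate from the construction, since $\bar{\mathcal{A}}_n \subset \mathcal{A}_n$ is cut out by the homotopy-invariant conditions $d_i(a) \simeq 0$ and the differentials are induced by exact functors; for $\N$ it follows from Proposition \ref{fiberwiseeqlemma}, which controls fiberwise equivalences of coCartesian fibrations, together with the invariance of the $\pi$-colimit condition.

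For the counit $\varepsilon \colon \C(\N(\mathcal{B}_\bullet)) \to \mathcal{B}_\bullet$, the analysis mirrors the abelian case. An object of $\C(\N(\mathcal{B}_\bullet))_n$ is a diagram $A$ as in Definition \ref{catifieddoldkannerve} subject to the additional vanishings $d_i(A) \simeq 0$ for $0 \leq i < n$. Combined with the zero-object condition on degenerate simplices, these force $A(\sigma) \simeq 0$ for every nondegenerate $\sigma$ other than $\id_{[n]}$ and $\partial_n$, and the cube condition at $\sigma = \id_{[n]}$ collapses to the relation $d\,A(\id_{[n]}) \simeq A(\partial_n)$. Evaluation at $\id_{[n]}$ then yields $\C(\N(\mathcal{B}_\bullet))_n \simeq \mathcal{B}_n$ compatibly with the differentials.

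For the unit $\eta \colon \mathcal{A}_\bullet \to \N(\C(\mathcal{A}_\bullet))$, I would use the normalization functor $\pi_m$ from Lemma \ref{piadj} to assign to $a \in \mathcal{A}_n$ the diagram with value $\pi_m(\sigma^* a) \in \bar{\mathcal{A}}_m$ on each nondegenerate $\sigma \colon [m] \to [n]$. The required $\pi$-colimit property on each cube $q_m$ follows from the limit cube defining $\pi_m$, dualized using the semi-orthogonal decomposition $(\bar{\mathcal{A}}_m^\perp, \bar{\mathcal{A}}_m)$ from the remark after Lemma \ref{piadj}, which pins down the contributions of degenerate simplices. To conclude that $\eta$ is a levelwise equivalence, I would prove a categorified analogue of Lemma \ref{Conservativeabelian} stating that $\C$ is conservative on the localization, and then deduce the claim from the counit case via the triangle identity $\C(\eta) \cdot \varepsilon_{\C(-)} \simeq \id$.

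The main obstacle is to assemble the adjunction and unit in a way that interacts correctly with the lax Grothendieck construction and the cube-based definition of $\N$. Here Lemmas \ref{laxgrothendieckiso} and \ref{laxnonlaxlemma} are the key technical tools: the former transfers between $\mathbb{C}$-indexed diagrams and their lax Grothendieck constructions, while the latter locates the resulting fibrations and records their behavior on thin simplices. With these in place, the global levelwise equivalence of $\eta$ reduces, via Proposition \ref{fiberwiseeqlemma}, to a degreewise comparison governed by the cubes $f_n$ and $q_n$ and by the reflection $\iota_n \dashv \pi_n$ established in Lemma \ref{piadj}.
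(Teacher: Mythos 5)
Your analysis of the counit side agrees with the paper's: there, too, one shows that a diagram $A \in \C(\N(\mathcal{B}_{\bullet}))_n$ must have $A(\sigma)\simeq 0$ for every $\sigma$ other than $\id_{[n]}$ and $\partial_n$, so that restriction along $\mathbb{N}^{\mathrm{op}}_{n/} \to \mathbb{N}^{\mathrm{op}}_{[n]/}$ identifies $\C(\N(\mathcal{B}_{\bullet}))_n$ with $\Gamma(\chi(\mathcal{B}_{\bullet}))_n$, which is then compared with $\mathcal{B}_n$ via Lemma \ref{grothendieckiso}. The gap lies on the other side. You posit an adjunction $\C \dashv \N$ of $\infty$-functors and a unit given objectwise by $a \mapsto (\pi_m(\sigma^*a))_{\sigma}$, but you construct neither. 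In the abelian case the adjunction is formal because $\N$ is literally $\Hom(\C(\mathbb{Z}\Delta^n),-)$; in the categorified setting $\N$ is defined by colimit conditions on diagrams in a Grothendieck construction and carries no such corepresentability, so the adjunction is not free. Moreover each $\pi_m$ is defined only up to contractible choice as the limit vertex of a cube, and promoting the pointwise prescription $(\pi_m(\sigma^*a))_{\sigma}$ to an actual morphism of $2$-simplicial stable $\infty$-categories, coherently natural in $\mathcal{A}_{\bullet}$ and compatible with all the $2$-simplicial structure maps, is exactly the hard part. Citing Lemmas \ref{laxgrothendieckiso} and \ref{laxnonlaxlemma} does not discharge this: those results compare a functor with sections of its (lax) Grothendieck construction, but they do not produce the unit. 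You would also need to supply a categorified version of Lemma \ref{Conservativeabelian}, which the paper only states for abelian groups.

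The paper sidesteps all of this by never constructing an adjunction. For $\N\circ\C\simeq\id$ it builds the zigzag (\ref{zigzag}) of pointwise weak equivalences $\id \to \mathbb{\Gamma}\circ\bbchi \leftarrow \mathcal{F} \to \tilde{\N}\circ\C \leftarrow \N\circ\C$, whose middle term $\mathcal{F}$ (Definition \ref{fdef}) is the ingredient your proposal has no counterpart for: $\mathcal{F}(\mathcal{A}_{\bullet})_n$ consists of diagrams on a mapping cylinder $\mathcal{M}_n$ that record simultaneously a section of $\bbchi(\mathcal{A}_{\bullet})$ and its normalization, so that the two restriction maps $r^*$ and $s^*$ become trivial Kan fibrations and no functorial choice of the $\pi_m$ is ever required. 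To make your route work you would either have to build such a correspondence anyway, or produce the unit by a Kan-extension argument of comparable difficulty; at that point the conservativity-plus-triangle-identity step, though formally sound once an adjunction exists, buys you nothing over the direct zigzag.
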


\subsection{The equivalence $\C \circ \N \simeq \id$}
\begin{lemma}
There is a natural weak equivalence
\begin{equation}
\nonumber
\C \circ \N \to \Gamma \circ \chi.
\end{equation}
\end{lemma}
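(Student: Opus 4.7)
The plan is to construct the natural transformation by restriction along canonical face inclusions, and then show it is a levelwise weak equivalence by comparing both sides with $\mathcal{B}_n$ via evaluation functors. For each $n \geq 0$ there is a canonical faithful functor $\iota_n\colon \mathbb{N}^{\mathrm{op}}_{n/} \hookrightarrow \mathbb{N}^{\mathrm{op}}_{[n]/}$ sending an object $m \leq n$ to the final-face inclusion $\partial_n \partial_{n-1} \cdots \partial_{m+1}\colon [m] \to [n]$. Restriction of a normalized diagram $A \colon \N(\mathbb{N}^{\mathrm{op}}_{[n]/}) \to \chi(\mathcal{B}_\bullet)$ along $\N(\iota_n)$ gives a map $\N(\mathbb{N}^{\mathrm{op}}_{n/}) \to \chi(\mathcal{B}_\bullet)$ over $\N(\mathbb{N}^{\mathrm{op}})$. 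This construction is functorial both in $n$ and in $\mathcal{B}_\bullet$, and assembles into the candidate natural transformation $\eta\colon \C \circ \N \to \Gamma \circ \chi$.

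I would first check that $A \circ \N(\iota_n)$ actually lies in $\Gamma(\chi(\mathcal{B}_\bullet))(n)$, i.e.\ sends every edge of $\N(\mathbb{N}^{\mathrm{op}}_{n/})^{\#}$ to a $\pi$-coCartesian edge. Such an edge corresponds to a step $[m-1] \hookrightarrow [m]$, whose image in $\chi(\mathcal{B}_\bullet)$ lies over the differential in $\mathbb{N}^{\mathrm{op}}$. The cube-colimit condition from Definition~\ref{catifieddoldkannerve} applied to the non-degenerate simplex $\partial_n \cdots \partial_{m+1}$, combined with the normalization $d_i A \simeq 0$ for $0 \leq i < n$ built into the definition of $\C$, forces most vertices of the relevant cube to vanish and collapses the colimit condition to the requirement that this edge projects to an equivalence in $\mathcal{B}_{m-1}$, which is the coCartesian condition.

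The second and main step is to prove that $\eta$ is a levelwise categorical equivalence. I would show that both source and target are equivalent to $\mathcal{B}_n$ via evaluation. Evaluation $\mathrm{ev}_n\colon \Gamma(\chi(\mathcal{B}_\bullet))(n) \to \mathcal{B}_n$ at the initial vertex is a trivial Kan fibration by the argument of Lemma~\ref{grothendieckiso}, since coCartesian sections are left Kan extensions from the initial vertex. For the source, I would show that the analogous evaluation $\mathrm{ev}_{[n]}\colon \C(\N(\mathcal{B}_\bullet))_n \to \mathcal{B}_n$, $A \mapsto A(\mathrm{id}_{[n]})$, is also a trivial Kan fibration: normalization forces $A(\sigma) \simeq 0$ for every $\sigma$ that is degenerate or whose image misses some $0 \leq i < n$, leaving only $\mathrm{id}_{[n]}$ and $\partial_n$ as possibly non-vanishing non-degenerate simplices; the cube condition for $\sigma = \mathrm{id}_{[n]}$ pins down $A(\partial_n) \simeq d(A(\mathrm{id}_{[n]}))$ up to contractible choice; and all remaining cube conditions become trivial because their would-be non-zero value is pushed through two successive differentials and so vanishes by $d^2 \simeq 0$. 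The triangle formed by the two evaluations and $\eta_n$ commutes by construction, and 2-out-of-3 completes the argument.

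The principal technical obstacle is this reduction of the cube-colimit conditions. Most critically, for $\sigma = \mathrm{id}_{[n]}$ one must verify that the $(n+1)$-dimensional $\pi$-colimit condition, after collapsing the many zero vertices, reduces to the single equivalence $d(A(\mathrm{id}_{[n]})) \simeq A(\partial_n)$ in $\mathcal{B}_{n-1}$; this is a direct but somewhat delicate computation of iterated cofibres in the fibres of $\pi$ which needs to be carried out carefully for arbitrary $n$.
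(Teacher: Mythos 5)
Your proposal matches the paper's proof: the natural transformation is constructed by restriction along the same inclusion $i_n:\mathbb{N}^{\mathrm{op}}_{n/}\to\mathbb{N}^{\mathrm{op}}_{[n]/}$ sending $m$ to the composite of final face maps, and the equivalence is established by the same key observation that the normalization conditions force $A(\sigma)\simeq 0$ for every simplex other than $\id_{[n]}$ and $\partial_n$. The extra details you supply (the coCartesian check, the two evaluation functors as trivial Kan fibrations, and the collapse of the cube conditions to the single equivalence $d(A(\id_{[n]}))\simeq A(\partial_n)$ via $d^2\simeq 0$) correctly flesh out what the paper's ``it follows'' leaves implicit.
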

\begin{proof}
The inclusion $\mathbb{N}^{\mathrm{op}} \to \mathbb{\Delta}^{(\mathrm{op},-)}$, mapping the arrow $m+1 \to m$ to the morphism $d_{m+1}:[m] \to [m+1]$, induces for each $n \geq 0$ a functor $i_n:\mathbb{N}^{\mathrm{op}}_{n/} \to \mathbb{N}^{\mathrm{op}}_{[n]/}$. Restriction then induces the functor
\begin{equation}
\nonumber
i_n^*:\Fun_{\N(\mathbb{N}^{\mathrm{op}})}(\N(\mathbb{N}^{\mathrm{op}}_{[n]/}),\chi(F)) \to \Fun_{\N(\mathbb{N}^{\mathrm{op}})}(\N(\mathbb{N}^{\mathrm{op}}_{n/}),\chi(F))
\end{equation}
which further restricts to a functor
\begin{equation}
\nonumber
i_n^*:\C(\N(\mathcal{B}_{\bullet}))_n \to \Gamma(\chi(\mathcal{B}_{\bullet}))_n.
\end{equation}
A diagram $A \in \C(\N(\mathcal{B}_{\bullet}))_n \subset \Fun(\N(\mathbb{N}^{\mathrm{op}}_{[n]/}),\chi(F))$ must, by definition of the normalized chain complex, satisfy the requirement $d_i(A) \simeq 0$ for all $0 \leq i<n$. This implies that $A(\sigma) \simeq 0$ for all simplices $\sigma:[m] \to [n]$ except $\id:[n] \to [n]$ and $\partial_n:[n-1] \to [n]$. It follows that $i_n^*$ is a categorical equivalence.
\end{proof}
Combining the above with the natural weak equivalence $\id \to \Gamma \circ \chi$ from Lemma \ref{grothendieckiso}, we obtain a zigzag of natural weak equivalences
\begin{equation}
\nonumber
\begin{tikzcd}
\id \arrow[r] & \Gamma \circ \chi & \C \circ \N \arrow[l]
\end{tikzcd}
\end{equation}
which descends to an equivalence $\id \simeq \C \circ \N$ of endofunctors of $\loc \mathbf{Ch}_{\geq 0}(\mathbf{St})$.
\subsection{The equivalence $\N \circ \C \simeq \id$}
We produce a zigzag of endofunctors of $\mathbf{St}_{\mathbb{\Delta}}$ which are weak equivalences as follows:
\begin{equation}
\label{zigzag}
\begin{tikzcd}
\id \arrow[r, "\eta"] & \mathbb{\Gamma} \circ \bbchi & \mathcal{F} \arrow[l, "\alpha"'] \arrow[r, "\beta"] & \tilde{\N} \circ \C & \N \circ \C \arrow[l, "\theta"']
\end{tikzcd}
\end{equation}
The weak equivalence $\eta$ was already introduced in Lemma \ref{laxgrothendieckiso}.
We further introduce the lax categorified Dold-Kan nerve:
\begin{definition}
The lax categorified Dold-Kan nerve $\tilde{\N}(\mathcal{B}_{\bullet})$ associated to a chain complex $\mathcal{B}_{\bullet} \in \mathbf{Ch}_{\geq 0}(\mathbf{St})$ is given by the $2$-simplicial stable $\infty$-category obtained by replacing the $(\infty,1)$-categorical Grothendieck construction $\chi(\mathcal{B}_{\bullet}) \to \N(\mathbb{N}^{\mathrm{op}})$ with the $(\infty,2)$-categorical Grothendieck construction
$\bbchi(\mathcal{B}_{\bullet}) \to \N^{\mathrm{sc}}(\mathbb{N}^{\mathrm{op}}) \cong \N(\mathbb{N}^{\mathrm{op}})$ in Definition \ref{catduplicialnerve} of the categorified Dold-Kan nerve. Here we regard $\mathbb{N}^{\mathrm{op}}$ as a $2$-category with discrete morphism categories.
\end{definition}
From Lemma \ref{laxnonlaxlemma} we immediately deduce the following:
\begin{lemma}
There is a weak equivalence
\begin{equation}
\nonumber
\theta: \tilde{\N} \to \N
\end{equation}
of functors $\mathbf{Ch}_{\geq 0}(\mathbf{St}) \to \mathbf{St}_{\mathbb{\Delta}}$.
\end{lemma}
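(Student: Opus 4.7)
The plan is to deduce this directly from Lemma~\ref{laxnonlaxlemma}(3), whose hypothesis is satisfied since $\mathbb{N}^{\mathrm{op}}$ is a $1$-category and therefore has discrete morphism categories when regarded as a $2$-category. That lemma produces a natural comparison map $\chi(\mathcal{B}_\bullet) \to \bbchi(\mathcal{B}_\bullet)$ which is a fiberwise equivalence of coCartesian fibrations over the identification $\N(\mathbb{N}^{\mathrm{op}}) \cong \N^{\mathrm{sc}}(\mathbb{N}^{\mathrm{op}})$, and which preserves coCartesian edges by construction. By Proposition~\ref{fiberwiseeqlemma}, this promotes to an equivalence of $\infty$-categories.

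Next, I would pass to functor $\infty$-categories: postcomposition with the above equivalence induces, for each $n \geq 0$, an equivalence
\begin{equation}
\nonumber
\Fun_{\N(\mathbb{N}^{\mathrm{op}})}(\N(\mathbb{N}^{\mathrm{op}}_{[n]/}), \chi(\mathcal{B}_\bullet)) \xrightarrow{\simeq} \Fun_{\N(\mathbb{N}^{\mathrm{op}})}(\N(\mathbb{N}^{\mathrm{op}}_{[n]/}), \bbchi(\mathcal{B}_\bullet))
\end{equation}
natural in $[n] \in \mathbb{\Delta}^{(\mathrm{op},-)}$ and in $\mathcal{B}_\bullet \in \mathbf{Ch}_{\geq 0}(\mathbf{St})$. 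One must then verify that this equivalence restricts to an equivalence between the full subcategories $\N(\mathcal{B}_\bullet)_n$ and $\tilde{\N}(\mathcal{B}_\bullet)_n$ cut out by the conditions in Definition~\ref{catifieddoldkannerve}. The zero-object condition at degenerate simplices is a fiberwise property and so is preserved automatically by a fiberwise equivalence.

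The step I expect to require the most care is the compatibility with the $\pi$-colimit cube condition on the cubes $q_m$. A cube valued in the total space of a coCartesian fibration is a $\pi$-colimit cube precisely when its coCartesian pushforward to the fiber over the colimit vertex is a colimit cube; because the comparison map preserves both coCartesian edges and the fiberwise $\infty$-categorical structure, and both fibrations live over the same $1$-categorical base with the same distinguished edges, this characterization transports faithfully. Once this is unpacked, the degreewise restricted equivalence assembles into a natural weak equivalence between $\N$ and $\tilde{\N}$, from which $\theta$ is obtained in the asserted direction by passing to the homotopy inverse; naturality in $\mathcal{B}_\bullet$ is automatic from the construction.
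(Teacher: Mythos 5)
Your proposal is correct and takes essentially the same route as the paper, which likewise deduces the lemma directly from Lemma~\ref{laxnonlaxlemma}(3) applied to $\mathbb{N}^{\mathrm{op}}$ viewed as a $2$-category with discrete morphism categories; the details you supply (promotion to an equivalence via Proposition~\ref{fiberwiseeqlemma}, postcomposition on functor categories over the base, and transport of the zero-object and $\pi$-colimit cube conditions) are exactly the right ones to fill in. The only cosmetic remark is that the naturally induced map goes $\N \to \tilde{\N}$, which is in fact the direction the paper uses in the zigzag~(\ref{zigzag}), so no passage to a homotopy inverse is actually needed.
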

Next, we define the functor $\mathcal{F}$:

\begin{definition}\label{fdef}
For $n \geq 0$ we introduce the simplicial set
\begin{equation}
\nonumber
\mathcal{M}_n=\N^{\mathrm{sc}}(\mathbb{\Delta}^{(\mathrm{op},-)}_{[n]/}) \coprod_{\N(\mathbb{N}^{\mathrm{op}}_{[n]/})} \Delta^1 \times \N(\mathbb{N}^{\mathrm{op}}_{[n]/})
\end{equation}
where the pushout is taken along the inclusion $\{1\} \times \id:\N(\mathbb{N}^{\mathrm{op}}_{[n]/}) \subset \Delta^1 \times \N(\mathbb{N}^{\mathrm{op}}_{[n]/})$.
We have the inclusions
\begin{equation}
\nonumber
\begin{tikzcd}
{\N^{\mathrm{sc}}(\mathbb{\Delta}^{(\mathrm{op},-)}_{[n]/}}) \arrow[r, "r"] & \mathcal{M}_n & {\N(\mathbb{N}^{\mathrm{op}}_{[n]/})} \arrow[l, "s"']
\end{tikzcd}
\end{equation}
where $s=\{0\} \times \id$. The functor $\mathcal{F}: \mathbf{St}_{\Delta} \to \mathbf{St}_{\Delta}$ is then defined as follows: For a $2$-simplicial stable $\infty$-category $\mathcal{A}_{\bullet} \in \mathbf{St}_{\Delta}$ the stable $\infty$-category
\begin{equation}
\nonumber
\mathcal{F}(\mathcal{A}_{\bullet})_n \subset \Map_{\N^{\mathrm{sc}}(\mathbb{\Delta}^{(\mathrm{op},-)})}(\mathcal{M}_n,\bbchi(\mathcal{A}_{\bullet}))
\end{equation}
is the full subcategory spanned by diagrams $A:\mathcal{M}_n \to \bbchi(\mathcal{A}_{\bullet})$ satisfying the following conditions:
\begin{itemize}
\item The functor $r^*A:\N^{\mathrm{sc}}(\mathbb{\Delta}^{(\mathrm{op},-)}_{[n]/}) \to \bbchi(\mathcal{A}_{\bullet})$ maps edges corresponding to strictly commuting triangles to $\pi$-coCartesian edges in $\bbchi(\mathcal{A}_{\bullet})$.
\item The functor $s^{*}A$ maps each vertex of $\N(\mathbb{N}^{\mathrm{op}}_{[n]/})$ corresponding to a degenerate simplex $\sigma:[k] \to [n]$ to a zero object in the fiber $\pi^{-1}([k])$.
\item For a nondegenerate simplex $\sigma:[k] \to [n]$ the cube
\begin{equation}
\nonumber
\begin{tikzcd}
\Delta^1 \times (\Delta^1)^k \arrow[r, "\id \times f_k^* \sigma"] & {\Delta^1 \times \N(\mathbb{N}^{\mathrm{op}}_{[n]/})} \arrow[r,"A"] & \bbchi(\mathcal{A}_{\bullet})
\end{tikzcd}
\end{equation}
is biCartesian in the fiber $\pi^{-1}([k])$.
\end{itemize}
\end{definition}
The natural transformations of the zigzag diagram (\ref{zigzag}) are obtained by pullback along the functors $r$ and $s$ introduced in the above definition:
\begin{lemma}
We obtain natural weak equivalences as follows:
\begin{enumerate}
\item Restriction along $r:\N^{\mathrm{sc}}(\mathbb{\Delta}^{(\mathrm{op},-)}_{[n]/}) \to \mathcal{M}_n$ induces a natural transformation
\begin{equation}
\nonumber
r^*:\mathcal{F} \to \mathbb{\Gamma} \circ \bbchi
\end{equation}
of endofunctors of $\mathbf{St}_{\mathbb{\Delta}}$ which is a pointwise weak equivalence.
\item Restriction along $s:\N(\mathbb{N}^{\mathrm{op}}_{[n]/}) \to \mathcal{M}_n$ induces a natural transformation
\begin{equation}
\nonumber
s^*:\mathcal{F} \to \tilde{\N} \circ \C
\end{equation}
of endofunctors of $\mathbf{St}_{\mathbb{\Delta}}$ which is a pointwise weak equivalence.
\end{enumerate}
\end{lemma}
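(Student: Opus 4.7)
The overall plan is to identify both $r^*$ and $s^*$ as pointwise categorical equivalences by exhibiting in each case a suitable extension argument. The simplicial set $\mathcal{M}_n$ is built as a pushout that glues $\N^{\mathrm{sc}}(\mathbb{\Delta}^{(\mathrm{op},-)}_{[n]/})$ to the prism $\Delta^1\times\N(\mathbb{N}^{\mathrm{op}}_{[n]/})$ along $\{1\}\times\id$, so the defining conditions of $\mathcal{F}(\mathcal{A}_\bullet)_n$ split cleanly across this gluing: condition~(1) concerns only $r^*A$ and cuts out $\mathbb{\Gamma}(\bbchi(\mathcal{A}_\bullet))(n)$, whereas conditions~(2) and~(3) link $r^*A$ to $s^*A$ through the interpolating prism.

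For part~(1), the strategy is to show that, given a diagram $A_r\in\mathbb{\Gamma}(\bbchi(\mathcal{A}_\bullet))(n)$, the space of extensions to a diagram $A\in\mathcal{F}(\mathcal{A}_\bullet)_n$ with $r^*A\simeq A_r$ is contractible. I would proceed by induction on the dimension of the nondegenerate simplex $\sigma\colon[k]\to[n]$ in $\mathbb{N}^{\mathrm{op}}_{[n]/}$: condition~(3) presents for each such $\sigma$ a cube in the fiber $\pi^{-1}([k])\simeq\mathcal{A}_k$ whose $\{1\}$-end is the cube $\mathcal{A}_\bullet\circ f_k$ evaluated at $A_r(\sigma)$ (well-defined by the coCartesian edge condition on $r^*A$), whose other $\{0\}$-vertices are zero (by condition~(2), since they are indexed by proper faces, which are either degenerate or have already been constrained), and whose value at the initial $\{0\}$-vertex is therefore determined, up to contractible choice, as the iterated fiber. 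This is precisely the $\pi_k$-type limit of Definition~\ref{cdef}, and the biCartesian-cube hypothesis makes it a right Kan extension datum.

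For part~(2), I would first verify that $s^*A$ lands in $\tilde{\N}(\C(\mathcal{A}_\bullet))_n$ and then construct an inverse. The landing amounts to three checks: degenerate simplices are sent to zero (condition~(2) verbatim); the cube obtained by composing $q_k$ with $s^*A$ is a $\pi$-colimit cube (this follows by the stable duality between limit and colimit cubes combined with the biCartesian condition of~(3), the coCartesian edges of~(1) translating the $\{1\}$-end into the shape required by Definition~\ref{catifieddoldkannerve}); and the value $s^*A(\sigma)$ lies in $\bar{\mathcal{A}}_k$, which is the categorified analogue of Lemma~\ref{piproperties}(1) and is proved by the same cube-collapsing argument as in Lemma~\ref{piadj}, reusing that opposing vertices of $f_k$ agree after postcomposition with any $\partial_i$ for $0\le i<k$. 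For the equivalence itself, I would construct a quasi-inverse by extending a diagram $B\in\tilde{\N}(\C(\mathcal{A}_\bullet))_n$ to $\mathcal{F}(\mathcal{A}_\bullet)_n$: the $\N^{\mathrm{sc}}(\mathbb{\Delta}^{(\mathrm{op},-)}_{[n]/})$-data is specified by choosing coCartesian lifts along $\bbchi(\mathcal{A}_\bullet)\to\N^{\mathrm{sc}}(\mathbb{N}^{\mathrm{op}})$, and then the prism is filled in via the biCartesian cubes of Definition~\ref{cdef}, which again involves only contractible choices.

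The main obstacle I anticipate is the careful bookkeeping of the inductive extension, specifically verifying that the biCartesian cube produced at stage $k$ is compatible with those already built for smaller dimensions and that the coCartesian-edge condition on $r^*A$ correctly identifies the $\{1\}$-end of each cube with the correct functorial value. This is essentially the technical heart of translating the classical argument for the projectors $\pi_n$ (Lemma~\ref{piproperties}) and their categorified versions (Lemma~\ref{piadj}) into the language of the lax Grothendieck construction, and it is where the use of marked simplicial sets and the properties of $\bbchi$ recorded in Lemma~\ref{laxnonlaxlemma} become indispensable.
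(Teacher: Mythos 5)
The paper itself does not prove this lemma: it cites the dual statements \cite[Propositions 4.5, 4.6]{Dy21} and only writes out the analogous arguments later, for the duplicial transformations $\alpha'$ and $\beta'$. Measured against those, your part (1) and the well-definedness portion of part (2) follow the correct strategy. Identifying $\mathbb{\Gamma}(\bbchi(\mathcal{A}_{\bullet}))_n$ as the locus cut out by the first condition of Definition \ref{fdef}, and recovering the remaining data of a diagram in $\mathcal{F}(\mathcal{A}_{\bullet})_n$ by first a $\pi$-left Kan extension (forcing zero objects at the degenerate simplices) and then a $\pi$-right Kan extension (the biCartesian cubes), with two applications of \cite[4.3.2.15]{Lu09}, is exactly how the paper later proves that $\alpha'$ is a pointwise weak equivalence; and your three checks that $s^*A$ lands in $\tilde{\N}(\C(\mathcal{A}_{\bullet}))_n$ mirror the proof that $\beta'$ is well defined.

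The gap is in the last step of part (2), the quasi-inverse to $s^*$. You assert that a diagram $B \in \tilde{\N}(\C(\mathcal{A}_{\bullet}))_n$ extends to $\mathcal{F}(\mathcal{A}_{\bullet})_n$ by choosing coCartesian lifts for the $\N^{\mathrm{sc}}(\mathbb{\Delta}^{(\mathrm{op},-)}_{[n]/})$-part and then filling the prism via biCartesian cubes, with only contractible choices. But the coCartesian lifts must emanate from an object $a = X(1,\id_{[n]}) \in \mathcal{A}_n$, and this object is not part of the data of $B$: the nerve of the normalized chain complex only records objects of the subcategories $\bar{\mathcal{A}}_k$. Producing $a$ from $B$ is precisely the essential surjectivity of the unit, i.e.\ the technical heart of the categorified Dold--Kan correspondence, and the obvious candidate fails already for $n=1$. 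There the conditions of Definition \ref{fdef} force $d_1(a) \simeq B(\partial_1)$, $d_0(a) \simeq B(\partial_0)$ and $\pi_1(a) \simeq B(\id_{[1]})$; taking $a = B(\id_{[1]}) \in \bar{\mathcal{A}}_1 = \ker d_0$ gives $d_0(a) \simeq 0$, whereas the nerve conditions make $B(\partial_0)$ the cofiber of $d_1(B(\id_{[1]})) \to B(\partial_1)$, which is nonzero in general. The correct $a$ has to be assembled as an extension of $B(\id_{[1]})$ by $s_0(B(\partial_0))$ using the semiorthogonal decomposition of Lemma \ref{piadj}, and in general one needs the inductive reconstruction argument of \cite[Proposition 4.6]{Dy21} (or, as the paper does for $\beta'$, a reduction to that result by two-out-of-three along the forgetful functor). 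Your sketch does not contain this construction, so as written the claim that $s^*$ is a pointwise weak equivalence is not established.
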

\begin{proof}
Dual to \cite[Proposition 4.5, 4.6]{Dy21}. We will give mutatis mutandis versions of large parts of these proofs in the duplicial case.
\end{proof}
\section{The categorified Dwyer-Kan correspondence}\label{sectioncatdwyer}
We now turn our attention to $2$-duplicial objects stable $\infty$-categories and refine the categorifications from the simplicial case.
\subsection{The categorified normalized duchain complex functor}
For any $2$-duplicial stable $\infty$-category $\mathcal{A}_{\bullet} \in \mathbf{St}_{\mathbb{\Xi}}$ we may consider the normalized chain complex $\C(\mathcal{A}_{\bullet})$ of the underlying $2$-simplicial object. The additional data inherent in a duplicial object allows us to make the following refinement:
\begin{lemma}\label{hasadjointslemma}
For any $2$-duplicial object $\mathcal{A}_{\bullet} \in \mathbf{St}_{\mathbb{\Xi}}$ the differential $d_{n+1}:\C(\mathcal{A})_{n+1} \to \C(\mathcal{A})_{n}$ admits a right adjoint for each $n \geq 0$. Further, for any morphism $g_{\bullet}:\mathcal{A}_{\bullet} \to \mathcal{A}'_{\bullet}$ in $\mathbf{St}_{\mathbb{\Xi}}$ the induced map
\begin{equation}
\nonumber
\begin{tikzcd}
\chi(\C(\mathcal{A}_{\bullet})) \arrow[rd] \arrow[rr, "\chi(\C(g_{\bullet}))"] &        & \chi(\C(\mathcal{A}'_{\bullet})) \arrow[ld] \\
                                                   & \N(\mathbb{N}^{\mathrm{op}}) &                             
\end{tikzcd}
\end{equation}
of Grothendieck constructions of the normalized chain complexes preserves Cartesian edges.
\end{lemma}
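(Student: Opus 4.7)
The plan is to produce the right adjoint by composing the adjunctions already available from the duplicial structure and Lemma~\ref{piadj}, and then to reduce preservation of Cartesian edges to the statement that $g_\bullet$ intertwines the categorified projectors $\pi_{n+1}$. Concretely, from the chain of adjunctions (\ref{dupadj}) viewed in $\mathbb{\Xi}^{(\mathrm{op},-)}$, the $\mathbf{Cat}_\infty$-enriched functor $\mathcal{A}_\bullet$ produces an adjunction $d_{n+1}\colon \mathcal{A}_{n+1}\leftrightarrow \mathcal{A}_n \colon s_{n+1}$ of $\infty$-categories, and combining it with the adjunction $\iota_{n+1}\dashv \pi_{n+1}$ from Lemma~\ref{piadj} (reading $\iota_0$ as the identity on $\mathcal{A}_0=\bar{\mathcal{A}}_0$ when $n=0$), I set
\begin{equation}
\nonumber
R_n := \pi_{n+1}\circ s_{n+1}\circ \iota_n\colon \bar{\mathcal{A}}_n\longrightarrow \bar{\mathcal{A}}_{n+1}.
\end{equation}
Using full faithfulness of $\iota_n$ together with the two adjunctions above, the chain of natural equivalences
\begin{equation}
\nonumber
\Map_{\bar{\mathcal{A}}_n}(d_{n+1}(x),y)\simeq \Map_{\mathcal{A}_n}(d_{n+1}\iota_{n+1}(x),\iota_n(y))\simeq \Map_{\mathcal{A}_{n+1}}(\iota_{n+1}(x),s_{n+1}\iota_n(y))\simeq \Map_{\bar{\mathcal{A}}_{n+1}}(x,R_n(y))
\end{equation}
then exhibits $R_n$ as a right adjoint to $d_{n+1}$ via Proposition~\ref{adjunctionviacounit}.

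For the second assertion I would use the characterization of Cartesian edges of the Grothendieck construction: an edge over $n+1\to n$ given by $(x,y,e\colon d_{n+1}(x)\to y)$ is Cartesian precisely when its mate $x\to R_n(y)$ is an equivalence. A morphism $g_\bullet$ restricts to functors $\bar g_k\colon \bar{\mathcal{A}}_k\to \bar{\mathcal{A}}'_k$, and $\chi(\C(g_\bullet))$ sends the above edge to one whose mate under $d_{n+1}\dashv R'_n$ is $\bar g_{n+1}$ applied to the original mate. It therefore suffices to produce a natural equivalence $\bar g_{n+1}\circ R_n\simeq R'_n\circ \bar g_n$. The enriched naturality of $g_\bullet$ automatically yields $g_{n+1}\circ s_{n+1}\simeq s'_{n+1}\circ g_n$ and $g_n\circ \iota_n\simeq \iota'_n\circ \bar g_n$, reducing the problem to the commutation
\begin{equation}
\nonumber
\bar g_{n+1}\circ \pi_{n+1}\simeq \pi'_{n+1}\circ g_{n+1}.
\end{equation}

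The main obstacle is this last intertwining, which I would establish by comparing two cubes in $\Fun(\mathcal{A}_{n+1},\mathcal{A}'_{n+1})$: the cube $(g_{n+1})_{\ast}c_{n+1}$ obtained by postcomposing the defining cube of $\pi_{n+1}$ from Definition~\ref{cdef} with $g_{n+1}$, and the cube $(g_{n+1})^{\ast}c'_{n+1}$ obtained from the analogous cube for $\mathcal{A}'_\bullet$ by precomposing with $g_{n+1}$. Postcomposition preserves finite limits because $g_{n+1}$ is exact, and precomposition preserves all limits because limits in functor categories are pointwise, so both are limit cubes. Their restrictions to the face $a_{n+1}=1$ agree up to natural equivalence via $g_{n+1}\circ f_{n+1}(a)^{\ast}\simeq f_{n+1}(a)^{\ast}\circ g_{n+1}$ (the enriched naturality of $g_\bullet$ applied to each $f_{n+1}(a_0,\dots,a_n)$), and the vanishing conditions on the opposite face are preserved since $g_{n+1}$ is zero-preserving. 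Essential uniqueness of limit cubes with the prescribed boundary data then yields the equivalence of limit vertices $g_{n+1}\circ \pi_{n+1}\simeq \pi'_{n+1}\circ g_{n+1}$, which closes the argument.
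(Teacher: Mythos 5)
Your proof is correct and follows essentially the same route as the paper: the right adjoint is obtained by composing $d_{n+1}\dashv s_{n+1}$ with $\iota_{n+1}\dashv\pi_{n+1}$, and preservation of Cartesian edges is reduced to the fact that $g_{\bullet}$ intertwines $\pi_{n+1}\circ s_{n+1}$ with $\pi'_{n+1}\circ s'_{n+1}$, which the paper also extracts from the defining cube $c_{n+1}$ together with exactness and duplicial naturality of $g_{\bullet}$. The only point to tighten is that an abstract equivalence $\bar g_{n+1}\circ R_n\simeq R'_n\circ\bar g_n$ is not formally enough --- you need the canonical mate of the square $d'_{n+1}\circ\bar g_{n+1}\simeq \bar g_n\circ d_{n+1}$ to be invertible --- but your cube comparison in fact identifies the counit edges $(0,\dots,0,0)\to(0,\dots,0,1)$ of the two cubes, so the equivalence you produce is compatible with the counits and hence is the mate; stating this explicitly closes the argument.
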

\begin{proof}
Let $\mathcal{A}_{\bullet} \in \mathbf{St}_{\mathbb{\Xi}}$ be a $2$-duplicial stable $\infty$-category and $n \geq 0$. Recall from Lemma \ref{piadj} the adjunction
\begin{equation}
\nonumber
\iota_{n+1}:\bar{\mathcal{A}}_{n+1} \leftrightarrow \mathcal{A}_{n+1}:\pi_{n+1}
\end{equation}
The datum of a $2$-duplicial $\infty$-category also contains adjoint functors
\begin{equation}
\nonumber
d_{n+1}:\mathcal{A}_{n+1} \leftrightarrow \mathcal{A}_n:s_{n+1}
\end{equation}
By the composition law for adjoints, $d_{n+1}:\bar{\mathcal{A}}_{n+1} \to \mathcal{A}_n$ has a right adjoint given by $\pi_{n+1} \circ s_{n+1}$. After restricting the domain, this is also the right adjoint of $d_{n+1}:\bar{\mathcal{A}}_{n+1} \to \bar{\mathcal{A}}_n$.

In order to prove the second part we give a more explicit description of the adjoint: Recall from Definition \ref{cdef} the cube 
\begin{equation}
\nonumber
c_{n+1}:(\Delta^1)^{n+2} \to \Fun(\mathcal{A}_{n+1},\mathcal{A}_{n+1}).
\end{equation}
The cube $s_{n+1}^* c_{n+1}:(\Delta^1)^{n+2} \to \Fun(\mathcal{A}_{n},\mathcal{A}_{n+1})$ has a choice of adjoint $\pi_{n+1} \circ s_{n+1}$ as the value at the vertex $(0, \dots,0)$. The counit $\epsilon:d_{n+1} \circ \pi_{n+1} \circ s_{n+1} \to \id$ of this adjunction is provided by evaluating the cube 
\begin{equation}
\nonumber
\begin{tikzcd}
(\Delta^1)^{n+2} \arrow[r, "c_{n+1}"] & {\Fun(\mathcal{A}_{n+1},\mathcal{A}_{n+1})} \arrow[r, "s_{n+1}^*"] & {\Fun(\mathcal{A}_n,\mathcal{A}_{n+1})} \arrow[r, "d_{n+1}   \circ -"] & {\Fun(\mathcal{A}_n,\mathcal{A}_{n})}
\end{tikzcd}
\end{equation}
at the edge $(0, \dots,0,0) \to (0, \dots,0,1)$.

A morphism $f_{\bullet}:\mathcal{A}_{\bullet} \to \mathcal{A}_{\bullet}'$ in $\mathbf{St}_{\mathbb{\Xi}}$ commutes with the duplicial maps and is levelwise exact, so that for any $a \in \mathcal{A}_n$ the edge $\pi_{n+1}(s_{n+1}(a)) \to a$ in $\chi(\C(\mathcal{A}_{\bullet}))$, which corresponds to the component of the counit, is mapped under $\chi(\C(f_{\bullet}))$ to a Cartesian edge in $\chi(\C(\mathcal{A}'_{\bullet}))$. Since Cartesian edges are unique in the sense of relative limits, this implies that all Cartesian edges are preserved.
\end{proof}
The category $\mathbf{Ch}^{\mathrm{R}}_{\geq 0}(\mathbf{St}) \subset \mathbf{Ch}_{\geq 0}(\mathbf{St})$ of connective chain complexes with right adjoints is the subcategory with objects given by those chain complexes $\mathcal{B}_{\bullet}$ such that the differential $d$ admits a right adjoint in each degree, or equivalently that $\chi(\mathcal{B}_{\bullet})$ is a biCartesian fibration, and morphisms given by those chain maps $f_{\bullet}:\mathcal{B}_{\bullet} \to \mathcal{C}_{\bullet}$ such that the induced map
\begin{equation}
\nonumber
\begin{tikzcd}
\chi(\mathcal{B}_{\bullet}) \arrow[rd] \arrow[rr, "\chi(f_{\bullet})"] &        & \chi(\mathcal{C}_{\bullet}) \arrow[ld] \\
                                                   & \N(\mathbb{N}^{\mathrm{op}}) &                             
\end{tikzcd}
\end{equation}
preserves Cartesian edges.
\begin{definition}
The categorified normalized duchain complex functor
\begin{equation}
\nonumber
\C:\mathbf{St}_{\mathbb{\Xi}} \to \mathbf{Ch}^{\mathrm{R}}_{\geq 0}(\mathbf{St})
\end{equation}
is the functor making the following diagram commute:
\begin{equation}
\nonumber
\begin{tikzcd}
\mathbf{St}_{\mathbb{\Xi}} \arrow[r, "\C"] \arrow[d] & \mathbf{Ch}^{\mathrm{R}}_{\geq 0}(\mathbf{St}) \arrow[d] \\
\mathbf{St}_{\mathbb{\Delta}} \arrow[r, "\C"]       & \mathbf{Ch}_{\geq 0}(\mathbf{St})           
\end{tikzcd}
\end{equation}
Here the left vertical arrow is the forgetful functor, the right vertical arrow is the inclusion and the bottom arrow is the categorified normalized chains functor.
\end{definition}

\subsection{The categorified Dwyer-Kan nerve}
We next provide a categorification of the nerve construction in the duplicial case.
For each $\langle n \rangle \in \mathbb{\Xi}$ we again have the lax comma category $\mathbb{N}^{\mathrm{op}}_{\langle n \rangle /}$, given as follows: 
\begin{itemize}
\item The objects are given by morphisms $\langle m \rangle \to \langle n \rangle$ in $\mathbb{\Xi}$.
\item A morphism from $\sigma:\langle m \rangle \to \langle n \rangle$ to $\sigma':\langle m' \rangle \to \langle n \rangle$ is given by a 2-commutative diagram

\begin{equation}\nonumber
\begin{tikzcd}
\langle m \rangle \arrow[rrdd,"\sigma"'] &                          & \langle m' \rangle \arrow[dd,"\sigma'"] \arrow[ll, "f"'] \\
                               & {} \arrow[r, Rightarrow] & {}                                             \\
                               &                          & \langle n \rangle                             
\end{tikzcd}
\end{equation}
where $f$ is a composite of final face maps $\partial_k:\langle k-1 \rangle \to \langle k \rangle$.
\end{itemize}

There is an evident forgetful functor $\mathbb{N}^{\mathrm{op}}_{\langle n \rangle/} \to \mathbb{N}^{\mathrm{op}}$. This construction extends to a $\mathbf{Cat}$-enriched functor $\mathbb{\Xi} \to \mathbf{Cat}_{/\mathbb{N}^{\mathrm{op}}}$ via postcomposition.

Recall from (\ref{cube}) the definition of the cube $q_m:[1]^{m+1} \to \mathbb{N}^{\mathrm{op}}_{[m]/}$. By composing with the inclusion $\mathbb{N}^{\mathrm{op}}_{[m]/} \to \mathbb{N}^{\mathrm{op}}_{\langle m \rangle/}$ we may also regard this as a cube $q_m:[1]^{m+1} \to \mathbb{N}^{\mathrm{op}}_{\langle m \rangle/}$.

\begin{definition}\label{catduplicialnerve}
For a connective chain complex with right adjoints $\mathcal{B}_{\bullet} \in \mathbf{Ch}^{\mathrm{R}}_{\geq 0}(\mathbf{St})$ with Grothendieck construction $\pi:\chi(\mathcal{B}_{\bullet}) \to \N(\mathbb{N}^{\mathrm{op}})$ we define the categorified Dwyer-Kan nerve $\N'(\mathcal{B_{\bullet}})_{\bullet}$ as follows:
\begin{equation}\nonumber
\N'(\mathcal{B_{\bullet}})_n \subset \Fun_{\N(\mathbb{N}^{\mathrm{op}})}(\N(\mathbb{N}^{\mathrm{op}}_{\langle n \rangle/}),\chi(\mathcal{B}_{\bullet}))
\end{equation}
is the full subcategory spanned by diagrams $A:\N(\mathbb{N}^{\mathrm{Op}}_{\langle n \rangle/}) \to \chi(\mathcal{B}_{\bullet})$ satisfying the following conditions:
\begin{itemize}
\item For every $\sigma: \langle m \rangle \to \langle n \rangle$, which is not injective on $\{0, \dots, m\}$, the object $A(\sigma)$ is a zero object in $\mathcal{B}_m$.
\item For $m \geq 1$ and every $\alpha: \langle m \rangle \to \langle n \rangle$, which is injective on $\{0, \dots, m\}$, the induced cube
\begin{equation}\nonumber
\begin{tikzcd}
(\Delta^1)^{m+1} \arrow[r, "q_m"] & {\N(\mathbb{N}^{\mathrm{op}}_{\langle m \rangle/})} \arrow[r, "\alpha \circ -"] & {\N(\mathbb{N}^{\mathrm{op}}_{\langle n \rangle/})} \arrow[r, "A"] & \chi(\mathcal{B}_{\bullet})
\end{tikzcd}
\end{equation}
is a $\pi$-colimit diagram with colimit vertex $(1, \dots, 1)$.
\item For each $m \geq 0$ and every $\alpha: \langle m \rangle \to \langle n \rangle$, which is injective as a map $\mathbb{Z} \to \mathbb{Z}$, the morphism induced by the strictly commutative triangle
\begin{equation}\nonumber
\begin{tikzcd}
{\langle m \rangle} \arrow[r, "\partial_{m+1}"] \arrow[rd, "\alpha"'] & {\langle m+1 \rangle} \arrow[d, "\tilde{\alpha}"] \\
                                                         & {\langle n \rangle}                              
\end{tikzcd}
\end{equation}
is $\pi$-Cartesian, where $\tilde{\alpha}$ is defined by the commutativity of the above diagram and $\tilde{\alpha}(m+1)=\alpha(0)+n+1$.
\end{itemize}
The $2$-functoriality of the categorified duplicial Dold-Kan nerve is induced by that of $\mathbb{\Xi} \to \mathbf{Cat}_{/\mathbb{N}^{\mathrm{op}}}$. This construction extends to a functor
\begin{equation}
\nonumber
\N':\mathbf{Ch}^{\mathrm{R}}_{\geq 0}(\mathbf{St}) \to \mathbf{St}_{\mathbb{\Xi}}.
\end{equation}
\end{definition}

In low degrees the cells are as follows:
\begin{itemize}
\item A $0$-cell $A \in \N'(\mathcal{B_{\bullet}})_0$ amounts to a diagram
\begin{equation}
\nonumber
\begin{tikzcd}
A_{00} \arrow[r]&A_{01} \arrow[d] \arrow[r, "*"] & A_0 \arrow[rd]                  &                                 &                &     \\
&A_{11} \arrow[r]                & A_{12} \arrow[r, "*"] \arrow[d] & A_1 \arrow[rd]                  &                &     \\
&                                & A_{22} \arrow[r]                & A_{23} \arrow[r, "*"] \arrow[d] & A_2 \arrow[rd] &     \\
&                                &                                 & ...                             & ...            & ...
\end{tikzcd}
\end{equation}
in $\chi(\mathcal{B}_{\bullet})$ such that:
\begin{itemize}
\item $A_i \in \mathcal{B}_0$ for $0 \leq i$, $A_{ij} \in \mathcal{B}_1$ for $0 \leq i \leq j \leq i+1$.
\item $A_{ii} \simeq 0 \in \mathcal{B}_1$ for $i \geq 0$.
\item The edge $A_{ii+2} \to A_i$ is $\pi$-Cartesian for $i \geq 0$.
\item The induced square
\begin{equation}
\nonumber
\begin{tikzcd}
d(A_{ii+1}) \arrow[d] \arrow[r] & A_i \arrow[d] \\
d(A_{i+1i+1}) \arrow[r]         & A_{i+1}      
\end{tikzcd}
\end{equation}
is biCartesian in $\mathcal{B}_0$ for $0 \leq i$.
\end{itemize}
\item A $1$-cell $A \in \N'(\mathcal{B}_{\bullet})_1$ amounts to a diagram
\begin{equation}
\nonumber
\begin{tikzcd}[column sep=.5cm]
A_{00} \arrow[rr]             &                                                            & A_{01} \arrow[rr] \arrow[dd] &                                                 & A_{02} \arrow[dd] \arrow[rr, "*"] &       & A_0 \arrow[rrdd]       &       &       &       &       \\
                              & A_{012} \arrow[ru, "*"] \arrow[rr] \arrow[lddd] \arrow[dd] &                              & A_{022} \arrow[ru, "*"] \arrow[dd] \arrow[lddd] &                                   &       &                        &       &       &       &       \\
                              &                                                            & A_{11} \arrow[rr]            &                                                 & A_{12} \arrow[rr]                 &       & A_{13} \arrow[rr, "*"] &       & A_1   &       &       \\
                              & A_{113} \arrow[ru, "*"] \arrow[rr]                         &                              & A_{123} \arrow[ru, "*"]                         & \dots                             & \dots & \dots                  & \dots & \dots  \\
A_{112} \arrow[ru] \arrow[rr] &                                                            & A_{122} \arrow[ru]           &                                                 &                                   &       &                        &       &       &       &      
\end{tikzcd}
\end{equation}
in $\chi(\mathcal{B}_{\bullet})$ satisfying the following conditions:
\begin{itemize}
\item $A_i \in \mathcal{B}_0$ for $0 \leq i$, $A_{ij} \in \mathcal{B}_1$ for $0 \leq i \leq j \leq i+2$, $A_{ijk} \in \mathcal{B}_{2}$ for $i \leq j \leq k \leq i+2$.
\item $A_{ii} \simeq 0 \in \mathcal{B}_1$ and $A_{iii+1} \simeq A_{ii+1i+1} \simeq A_{ii+2i+2} \simeq 0 \in \mathcal{B}_2$ for $0 \leq i$.
\item The edge $A_{ii+2} \to A_i$ is $\pi$-Cartesian for $0 \leq i$.
\item The edge $A_{ii+1i+2} \to A_{ii+1}$ is $\pi$-Cartesian for $0 \leq i$.
\item The induced squares
\begin{equation}
\nonumber
\begin{tikzcd}
d(A_{ii+1}) \arrow[r] \arrow[d] & A_i \arrow[d] &  & d(A_{ii+2}) \arrow[r] \arrow[d] & A_i \arrow[d] \\
d(A_{i+1i+1}) \arrow[r]         & A_{i+1}       &  & A_{i+2i+2} \arrow[r]            & A_{i+2}      
\end{tikzcd}
\end{equation}
are biCartesian in $\mathcal{B}_0$ for $i \in \mathbb{N}$.
\item The induced cube
\begin{equation}
\nonumber
\begin{tikzcd}[column sep=-.5cm]
                                                & A_{ii+1} \arrow[rr] \arrow[dd] &                                      & A_{ii+2} \arrow[dd] \\
d(A_{ii+1i+2}) \arrow[ru] \arrow[dd] \arrow[rr] &                                & d(A_{ii+2i+2}) \arrow[ru] \arrow[dd] &                     \\
                                                & A_{i+1i+1} \arrow[rr]          &                                      & A_{i+1i+2}          \\
d(A_{i+1i+1i+2}) \arrow[ru] \arrow[rr]          &                                & d(A_{i+1i+2i+2}) \arrow[ru]          &                    
\end{tikzcd}
\end{equation}
is biCartesian in $\mathcal{B}_1$ for $i \in \mathbb{N}$.
\end{itemize}
\end{itemize}
\begin{example}
Let
\begin{equation}
\nonumber
\begin{tikzcd}
\mathcal{B}_{\bullet}=\mathcal{B}_0 & \mathcal{B}_1 \arrow[l] & 0 \arrow[l] & \dots \arrow[l]
\end{tikzcd}
\end{equation}
be a chain complex concentrated in degrees $0$ and $1$ such that $d$ admits a right adjoint. The $\infty$-category $\N'(\mathcal{B}_{\bullet})_n$ consists of diagrams in $\chi(\mathcal{B}_{\bullet})$ of the form
\begin{equation}
\nonumber
\begin{tikzcd}
A_{00} \arrow[r] & A_{01} \arrow[d] \arrow[r] & \dots \arrow[r] & A_{0n} \arrow[r] \arrow[d] & A_{0n+1} \arrow[r, "*"] \arrow[d] & A_0 \arrow[rd]                    &                                   &       \\
                 & A_{11} \arrow[r]           & \dots \arrow[r] & A_{1n} \arrow[r] \arrow[d] & A_{1n+1} \arrow[d] \arrow[r]      & A_{1n+2} \arrow[r, "*"] \arrow[d] & A_1 \arrow[rd]                    &       \\
                 &                            &                 & \dots \arrow[r]            & A_{2n+1} \arrow[r] \arrow[d]      & A_{2n+2} \arrow[r] \arrow[d]      & A_{2n+3} \arrow[r, "*"] \arrow[d] & A_2   \\
                 &                            &                 &                            & \dots                             & {}                                & \dots                             & \dots
\end{tikzcd}
\end{equation}
satisfying the following conditions:
\begin{itemize}
\item $A_i \in \mathcal{B}_0$ and $A_{ij} \in \mathcal{B}_1$ for $i \in \mathbb{N}$ and $0 \leq j \leq n+1+i$.
\item The object $A_{ii} \in \mathcal{B}_1$ is a zero object for $i \in \mathbb{N}$.
\item The starred edge $A_{in+1+i} \to A_i$ is Cartesian for $i \in \mathbb{N}$.
\item For $0 \leq i < j  \leq n+1+i$ the induced square
\begin{equation}
\nonumber
\begin{tikzcd}
d(A_{ij}) \arrow[r] \arrow[d] & A_i \arrow[d] \\
d(A_{jj}) \arrow[r]           & A_j          
\end{tikzcd}
\end{equation}
in $\mathcal{B}_0$ is biCartesian.
\item For $0 \leq i <j<k \leq n+1+i$ the induced square
\begin{equation}
\nonumber
\begin{tikzcd}
A_{ij} \arrow[r] \arrow[d] & A_{ik} \arrow[d] \\
A_{jj} \arrow[r]           & A_{jk}          
\end{tikzcd}
\end{equation}
in $\mathcal{B}_1$ is biCartesian.
\end{itemize}
\end{example}

\begin{lemma}\label{nervecomparison}
Let $\mathcal{B}_{\bullet} \in \mathbf{Ch}^{\mathrm{R}}_{\geq 0}(\mathbf{St})$ be a chain complex admitting right adjoints. For each $n \geq 0$ the natural functor
\begin{equation}
\nonumber
\N'(\mathcal{B}_{\bullet})_n \to \N(\mathcal{B}_{\bullet})_n
\end{equation}
induced by the inclusion $\mathbb{N}^{\mathrm{op}}_{[n]/} \to \mathbb{N}^{\mathrm{op}}_{\langle n \rangle/}
$ is a trivial Kan fibration.
\end{lemma}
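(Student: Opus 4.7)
The approach I would take is to categorify the inductive extension argument from the proof of Lemma \ref{nervecomparisonclassical}. I would filter $\mathbb{N}^{\mathrm{op}}_{\langle n \rangle/}$ by letting $\mathcal{D}_k$ (for $k \geq n$) be the full subcategory spanned by those $\tau:\langle m \rangle \to \langle n \rangle$ with $\tau(m) \leq k$; this yields $\mathcal{D}_n = \mathbb{N}^{\mathrm{op}}_{[n]/}$ and $\mathbb{N}^{\mathrm{op}}_{\langle n \rangle/} = \bigcup_{k \geq n} \mathcal{D}_k$. Let $\N'_k(\mathcal{B}_\bullet)_n \subset \Fun_{\N(\mathbb{N}^{\mathrm{op}})}(\N(\mathcal{D}_k),\chi(\mathcal{B}_\bullet))$ denote the full subcategory of diagrams satisfying those conditions of Definition \ref{catduplicialnerve} that make sense on $\mathcal{D}_k$. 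Then $\N'_n(\mathcal{B}_\bullet)_n = \N(\mathcal{B}_\bullet)_n$ and $\N'(\mathcal{B}_\bullet)_n = \lim_k \N'_k(\mathcal{B}_\bullet)_n$, so it suffices to prove each restriction $\N'_{k+1}(\mathcal{B}_\bullet)_n \to \N'_k(\mathcal{B}_\bullet)_n$ is a trivial Kan fibration.

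To handle the step $k \to k+1$, I would factor it through an intermediate $\N'_k^{+}(\mathcal{B}_\bullet)_n$, mirroring the two cases in the classical argument. Let $\mathcal{D}_k^{+} \subset \mathcal{D}_{k+1}$ be $\mathcal{D}_k$ together with those $\tau$ with $\tau(m) = k+1$ and $\tau(0) = k-n$; each such $\tau$ has the form $\tilde{\alpha}$ for $\alpha = \tau \circ \partial_m \in \mathcal{D}_k$ (which is injective as a map $\mathbb{Z} \to \mathbb{Z}$ since $\tau$ is injective on $\{0,\dots,m\}$), so condition (3) of Definition \ref{catduplicialnerve} applies. The factor $\N'_k^{+}(\mathcal{B}_\bullet)_n \to \N'_k(\mathcal{B}_\bullet)_n$ amounts, at each new $\tau$, to choosing a $\pi$-Cartesian lift of $A(\alpha)$ along the edge $\langle m+1 \rangle \to \langle m \rangle$ in $\N(\mathbb{N}^{\mathrm{op}})$. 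Because $\chi(\mathcal{B}_\bullet) \to \N(\mathbb{N}^{\mathrm{op}})$ is a Cartesian fibration (the defining feature of $\mathbf{Ch}^{\mathrm{R}}_{\geq 0}(\mathbf{St})$), the space of such lifts is contractible and this factor is a trivial Kan fibration.

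For the remaining factor $\N'_{k+1}(\mathcal{B}_\bullet)_n \to \N'_k^{+}(\mathcal{B}_\bullet)_n$ the newly-added objects are $\tau$ with $\tau(m) = k+1$ and $\tau(0) > k-n$. For each such $\tau$ I form $\varphi:\langle m+1 \rangle \to \langle n \rangle$ by $\varphi(0) = k-n$ and $\varphi \circ \partial_0 = \tau$; then $\varphi \in \mathcal{D}_k^{+}$ and each $\varphi \circ \partial_i$ for $0 < i \leq m+1$ also lies in $\mathcal{D}_k^{+}$. The $\pi$-colimit cube condition of Definition \ref{catduplicialnerve} applied to $\varphi$ then forces $A(\tau) = A(\varphi \circ \partial_0)$ to be the colimit vertex of a cube whose other vertices are already determined, so this factor is also a trivial Kan fibration.

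The main obstacle I anticipate is the bookkeeping: one must carefully verify that morphisms internal to the newly-added objects at each step are correctly captured by the Cartesian and colimit characterizations so that the extension is truly determined up to contractible choice, and check that the extended diagrams automatically inherit the remaining conditions of Definition \ref{catduplicialnerve} at every newly-added $\tau$. This is the $\infty$-categorical counterpart of the final combinatorial portion of the proof of Lemma \ref{nervecomparisonclassical}, where the simplicial identities forced consistency of the differential equations across the extension; here the analogous propagation should follow from the uniqueness of (co)Cartesian lifts and (co)limit diagrams up to contractible choice.
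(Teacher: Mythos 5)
Your proposal is correct and follows essentially the same route as the paper: the same filtration by the value of $\tau(m)$, the same intermediate stage adjoining the $\tau$ with $\tau(m)=k+1$ and $\tau(0)=k-n$ via $\pi$-Cartesian lifts (i.e.\ $\pi$-right Kan extension, using that $\chi(\mathcal{B}_{\bullet})$ is Cartesian), and the same final stage realized as a $\pi$-left Kan extension forced by the colimit-cube condition. The only cosmetic difference is that the paper puts all morphisms that are non-injective on $\{0,\dots,m\}$ into every stage $D_k$ from the outset, which slightly cleans up the bookkeeping you flag at the end, since then the vertices of the relevant cubes other than the colimit vertex are already determined at each step.
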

\begin{proof}
We define a nested family of full subcategories $D_k \subset \mathbb{N}^{\mathrm{op}}_{\langle n \rangle/}$ for $k \geq n$ as follows: $D_k$ is spanned by the morphisms $\langle m \rangle \to \langle n \rangle$ in $\mathbb{\Xi}$ that are not injective on $\{0, \dots, m\}$ as well as those morphisms $\tau:\langle m \rangle \to \langle n \rangle$ such that $\tau(m) \leq k$. We then introduce the full subcategory
\begin{equation}
\nonumber
\mathcal{S}_k \subset \Fun_{\N(\mathbb{N}^{\mathrm{op}})}(\N(D_k),\chi(\mathcal{B_{\bullet}}))
\end{equation}
spanned by those diagrams $A:\N(D_k) \to \chi(\mathcal{B_{\bullet}})$ which satisfy all the conditions from Definition \ref{catduplicialnerve} that make sense for such a diagram.

By construction we have $\N'(\mathcal{B}_{\bullet})_n \cong \lim_k \mathcal{S}_k$, so to show that the map $\N'(\mathcal{B}_{\bullet})_n \to \N(\mathcal{B}_{\bullet})_n$ is a trivial Kan fibration, it suffices to show that for each $k \geq n$ the map $\mathcal{S}_{k+1} \to \mathcal{S}_k$ is a trivial Kan fibration as well as the map $\mathcal{S}_n \to \N(\mathcal{B}_{\bullet})_n$.

The latter of these claims is immediate from \cite[4.3.2.15]{Lu09} since a diagram $A \in \mathcal{S}_n$ is exactly a $\pi$-left Kan extension of its restriction to $\N(\mathbb{N}^{\mathrm{op}}_{[n]/})$.

To prove the first claim we define a further auxiliary category $D_k'$ nested between $D_k$ and $D_{k+1}$: It is the full subcategory $D_k' \subset D_{k+1}$ spanned by $D_k$ as well as those $\tau:\langle m \rangle \to \langle n \rangle$ such that $\tau(m)=k+1$ and $\tau(0)= k-n$. Let again
\begin{equation}
\nonumber
\mathcal{S}'_{k} \subset \Fun_{\N(\mathbb{N}^{\mathrm{op}})}(\N(D'_{k}),\chi(\mathcal{B}_{\bullet}))
\end{equation} 
denote the full subcategory spanned by diagrams $A$ satisfying all relevant conditions from the definition of the Dwyer-Kan nerve.

The new requirement on a diagram $A \in \mathcal{S}'_{k}$ is that all the edges $A(\tau) \to A(\tau \circ \partial_{m+1})$ for the newly added $\tau$ are Cartesian. Therefore, the objects of $\mathcal{S}'_{k}$ are precisely the $\pi$-right Kan extensions of diagrams in $\mathcal{S}_k$. It follows that $\mathcal{S}'_{k} \to \mathcal{S}_k$ is a trivial Kan fibration.

Finally, the map $\mathcal{S}_{k+1} \to \mathcal{S}'_{k}$ is a trivial Kan fibrations since the diagrams contained in the former are precisely the $\pi$-left Kan extensions of the diagrams in the latter $\infty$-category.
\end{proof}
A $2$-duplicial stable $\infty$-category has adjunctions $d_{n+1} \dashv s_{n+1}$ as part of its coherent data. We now show that the mere existence of such right adjoints in each degree is in some sense a sufficient criterion for a $2$-simplicial object to have an extension to a $2$-duplicial object.
\begin{proposition}
Let $\mathcal{A}_{\bullet} \in \mathbf{St}_{\mathbb{\Delta}}$ be a $2$-simplicial stable $\infty$-category such that $d_{n+1}:\mathcal{A}_{n+1} \to \mathcal{A}_{n}$ admits a right adjoint for all $n \geq 0$. Then there exists a $2$-duplicial stable $\infty$-category $\mathcal{D}_{\bullet} \in \mathbf{St}_{\mathbb{\Xi}}$ such that $\mathcal{A}_{\bullet} \simeq \mathcal{D}_{\bullet}$ in the localization $\loc \mathbf{St}_{\mathbb{\Delta}}$.
\end{proposition}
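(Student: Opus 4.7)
My plan is to construct $\mathcal{D}_\bullet$ as the composite $\mathcal{D}_\bullet := \N'(\C(\mathcal{A}_\bullet))$. The key point is that under our hypothesis $\C(\mathcal{A}_\bullet)$ lies in the refined subcategory $\mathbf{Ch}^{\mathrm{R}}_{\geq 0}(\mathbf{St})$, so that the categorified Dwyer--Kan nerve $\N'$ is applicable, and then the equivalence $\mathcal{A}_\bullet \simeq \mathcal{D}_\bullet$ in the localization is forced by combining the nerve comparison with the categorified Dold--Kan correspondence.

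The first step is to verify that the differential $d_{n+1}:\bar{\mathcal{A}}_{n+1} \to \bar{\mathcal{A}}_n$ of the normalized chain complex admits a right adjoint for each $n \geq 0$. This is essentially the first half of the proof of Lemma \ref{hasadjointslemma}: Lemma \ref{piadj} supplies the adjunction $\iota_{n+1} \dashv \pi_{n+1}$ between $\bar{\mathcal{A}}_{n+1}$ and $\mathcal{A}_{n+1}$, and by hypothesis $d_{n+1}:\mathcal{A}_{n+1} \to \mathcal{A}_n$ admits a right adjoint, call it $r_{n+1}$. Composing adjunctions, the functor $\iota_n \circ d_{n+1}|_{\bar{\mathcal{A}}_{n+1}} = d_{n+1} \circ \iota_{n+1}$ has right adjoint $\pi_{n+1} \circ r_{n+1}$, from which a right adjoint to the restriction $d_{n+1}|_{\bar{\mathcal{A}}_{n+1}}: \bar{\mathcal{A}}_{n+1} \to \bar{\mathcal{A}}_n$ is obtained by precomposing with $\iota_n$. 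This step only uses the bare adjunction $d_{n+1} \dashv r_{n+1}$ in each degree, not the full duplicial coherence, so the hypothesis of the proposition suffices.

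Given this, I set $\mathcal{D}_\bullet := \N'(\C(\mathcal{A}_\bullet))$, which by construction is a $2$-duplicial stable $\infty$-category. By Lemma \ref{nervecomparison}, the natural functor $\N'(\C(\mathcal{A}_\bullet))_n \to \N(\C(\mathcal{A}_\bullet))_n$ induced by the inclusion $\mathbb{N}^{\mathrm{op}}_{[n]/} \hookrightarrow \mathbb{N}^{\mathrm{op}}_{\langle n \rangle/}$ is a trivial Kan fibration for every $n$. Since this inclusion is natural in $[n] \in \mathbb{\Delta}$, these trivial fibrations assemble into a weak equivalence of $2$-simplicial stable $\infty$-categories, exhibiting the underlying $2$-simplicial object of $\mathcal{D}_\bullet$ as equivalent to $\N(\C(\mathcal{A}_\bullet))$ in $\loc \mathbf{St}_{\mathbb{\Delta}}$. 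The categorified Dold--Kan correspondence (Theorem \ref{catdoldkan}) supplies an equivalence $\N(\C(\mathcal{A}_\bullet)) \simeq \mathcal{A}_\bullet$ in the same localization, and concatenating the two yields $\mathcal{A}_\bullet \simeq \mathcal{D}_\bullet$, as desired.

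The main obstacle is really the verification in the first paragraph; once $\C(\mathcal{A}_\bullet) \in \mathbf{Ch}^{\mathrm{R}}_{\geq 0}(\mathbf{St})$ is established, the remainder is a bookkeeping application of results already in the paper. A mild subtlety worth flagging is that membership in $\mathbf{Ch}^{\mathrm{R}}_{\geq 0}(\mathbf{St})$ only constrains objects (existence of right adjoints for the differentials) and not morphisms, so I do not need to exhibit naturality of the adjoint construction in $\mathcal{A}_\bullet$; such naturality would require the full $(\infty,2)$-categorical duplicial data, which is exactly what we do not have in the hypothesis.
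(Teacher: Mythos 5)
Your proposal is correct and follows essentially the same route as the paper: establish $\C(\mathcal{A}_{\bullet}) \in \mathbf{Ch}^{\mathrm{R}}_{\geq 0}(\mathbf{St})$ by running the argument of Lemma \ref{hasadjointslemma} with the hypothesized right adjoint in place of $s_{n+1}$, set $\mathcal{D}_{\bullet}=\N'(\C(\mathcal{A}_{\bullet}))$, and conclude via Lemma \ref{nervecomparison} and Theorem \ref{catdoldkan}. Your added remarks --- that only the bare degreewise adjoints are needed and that no naturality of the adjoint construction is required since $\mathbf{Ch}^{\mathrm{R}}_{\geq 0}(\mathbf{St})$ constrains morphisms only for maps of complexes --- make explicit what the paper leaves implicit, but do not change the argument.
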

\begin{proof}
By the same argument as in the proof of Lemma \ref{hasadjointslemma}, the normalized chain complex $\C(\mathcal{A}_{\bullet})$ admits right adjoints, i.e. $\C(\mathcal{A}_{\bullet}) \in \mathbf{Ch}^{\mathrm{R}}_{\geq 0}(\mathbf{St})$. We may therefore apply the categorified Dwyer-Kan nerve to this chain complex and obtain the $2$-duplicial object $\N'(\C(\mathcal{A}_{\bullet}))$. From Theorem \ref{catdoldkan} and Lemma \ref{nervecomparison} we obtain a chain of equivalences
\begin{equation}
\nonumber
\mathcal{A}_{\bullet} \simeq \N(\C(\mathcal{A}_{\bullet})) \simeq \N'(\C(\mathcal{A}_{\bullet}))
\end{equation}
in $\loc \mathbf{St}_{\mathbb{\Delta}}$, as desired.
\end{proof}
There are again natural classes $\loc$ of weak equivalences in $\mathbf{St}_{\mathbb{\Xi}}$ and $\mathbf{Ch}^{\mathrm{R}}_{\geq 0}(\mathbf{St})$ given by those morphisms which are levelwise categorical equivalences.
\begin{theorem}\label{catdwyerkantheorem}
The categorified normalized duchain complex functor $\C$ and the categorified Dwyer-Kan nerve $\N'$ induce an equivalence of $\infty$-categories
\begin{equation}
\nonumber
\C: \loc \mathbf{St}_{\mathbb{\Xi}} \leftrightarrow \loc \mathbf{Ch}^{\mathrm{R}}_{\geq 0}(\mathbf{St}) : \N'
\end{equation}
between the $\infty$-categorical localizations at the classes of weak equivalences.
\end{theorem}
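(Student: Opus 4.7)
The plan is to deduce the theorem from the categorified Dold-Kan correspondence (Theorem \ref{catdoldkan}) via the comparison Lemma \ref{nervecomparison}, in exact analogy with the reduction of the classical Dwyer-Kan equivalence to classical Dold-Kan through Lemma \ref{nervecomparisonclassical}. The key inputs are already present: Lemma \ref{hasadjointslemma} shows that $\C$ maps into $\mathbf{Ch}^{\mathrm{R}}_{\geq 0}(\mathbf{St})$, the defining square
\begin{equation}
\nonumber
\begin{tikzcd}
\mathbf{St}_{\mathbb{\Xi}} \arrow[d] \arrow[r, "\C"] & \mathbf{Ch}^{\mathrm{R}}_{\geq 0}(\mathbf{St}) \arrow[d] \\
\mathbf{St}_{\mathbb{\Delta}} \arrow[r, "\C"] & \mathbf{Ch}_{\geq 0}(\mathbf{St})
\end{tikzcd}
\end{equation}
commutes by construction, and Lemma \ref{nervecomparison} yields a natural equivalence making the analogous square for $\N'$ and $\N$ commute up to homotopy.

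First I would observe that the forgetful functors $\mathbf{St}_{\mathbb{\Xi}} \to \mathbf{St}_{\mathbb{\Delta}}$ and $\mathbf{Ch}^{\mathrm{R}}_{\geq 0}(\mathbf{St}) \hookrightarrow \mathbf{Ch}_{\geq 0}(\mathbf{St})$ descend to conservative functors on the localizations, simply because weak equivalences on both sides are defined levelwise as categorical equivalences of stable $\infty$-categories and are therefore unchanged by the forgetful functors. I would then construct the unit and counit of the desired equivalence via a zigzag of natural weak equivalences parallel to the one appearing in the proof of Theorem \ref{catdoldkan}: an intermediate functor $\mathcal{F}'$ analogous to Definition \ref{fdef}, built out of the lax undercategory of $\mathbb{\Xi}^{(\mathrm{op},-)}$ at $\langle n \rangle$, should interpolate between $\id$ and $\N' \circ \C$ via restriction maps to the lax Grothendieck construction and to the categorified Dwyer-Kan nerve respectively. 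To check that the resulting zigzag is pointwise a weak equivalence, I would apply the forgetful functor and use that the induced zigzag on underlying $2$-simplicial objects (resp.\ on underlying chain complexes) coincides, up to the natural comparison of Lemma \ref{nervecomparison}, with the one already shown to be a pointwise weak equivalence in the proof of Theorem \ref{catdoldkan}; conservativity then yields the claim for the duplicial zigzag.

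The main obstacle will be the construction of the intermediate duplicial analogue of $\mathcal{F}$ and the verification of the additional $\pi$-Cartesian edge condition built into Definition \ref{catduplicialnerve}: one must ensure that the maps in the zigzag land in the full subcategory cut out by this Cartesian condition, and that the counterpart of $s^{\ast}$ from the simplicial case remains a pointwise weak equivalence when the target is restricted to $\mathbf{Ch}^{\mathrm{R}}_{\geq 0}(\mathbf{St})$ rather than $\mathbf{Ch}_{\geq 0}(\mathbf{St})$. Lemma \ref{hasadjointslemma} supplies the essential compatibility here, since it provides precisely the Cartesian lifts against the shift morphisms coming from the $2$-duplicial structure that are required to extend the simplicial construction over the enlarged indexing $2$-category $\mathbb{\Xi}$. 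Once these pointwise checks are in place, Proposition \ref{fiberwiseeqlemma} can be invoked to promote fiberwise equivalences over $\N(\mathbb{N}^{\mathrm{op}})$ to equivalences of Grothendieck constructions preserving the relevant Cartesian edges, closing the argument.
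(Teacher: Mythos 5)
Your proposal follows essentially the same route as the paper: both directions are reduced to the categorified Dold--Kan correspondence, with $\C \circ \N' \simeq \id$ handled by extending the simplicial zigzag via Lemma \ref{nervecomparison} and Proposition \ref{fiberwiseeqlemma}, and $\N' \circ \C \simeq \id$ handled by a duplicial zigzag through an intermediate functor $\mathcal{F}'$ whose comparison map $s^*$ into $\tilde{\N}' \circ \C$ is shown to respect the extra Cartesian-edge condition using Lemma \ref{hasadjointslemma}, the final weak-equivalence check being delegated to the underlying $2$-simplicial objects via the forgetful functor and two-out-of-three. The approach and the identified technical obstacles match the paper's proof.
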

The proof of this theorem occupies the following two subsections.
\subsection{The equivalence $\C \circ \N' \simeq \id$}
We first prove the equivalence $\C \circ \N' \simeq \id$ which is readily obtained from the corresponding equivalence in the simplicial case. 

\begin{theorem}There is a natural equivalence $\C \circ \N' \simeq \id$ of endofunctors of the $\infty$-category $\loc \mathbf{Ch}^{\mathrm{R}}_{\geq 0}(\mathbf{St})$ of connective chain complexes of stable $\infty$-categories that admit adjoints in each degree.
\end{theorem}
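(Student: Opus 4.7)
The plan is to reduce this to the simplicial statement $\C \circ \N \simeq \id$ from Theorem~\ref{catdoldkan} via the nerve comparison Lemma~\ref{nervecomparison}. For $\mathcal{B}_{\bullet} \in \mathbf{Ch}^{\mathrm{R}}_{\geq 0}(\mathbf{St})$, restriction along $\mathbb{N}^{\mathrm{op}}_{[n]/} \hookrightarrow \mathbb{N}^{\mathrm{op}}_{\langle n \rangle /}$ yields a natural levelwise trivial Kan fibration from the underlying $2$-simplicial object of $\N'(\mathcal{B}_{\bullet})$ to $\N(\mathcal{B}_{\bullet})$. Since the categorified normalized duchain functor $\C:\mathbf{St}_{\mathbb{\Xi}} \to \mathbf{Ch}^{\mathrm{R}}_{\geq 0}(\mathbf{St})$ agrees with the simplicial $\C$ on underlying $2$-simplicial objects, applying it produces a natural levelwise equivalence $\C(\N'(\mathcal{B}_{\bullet})) \simeq \C(\N(\mathcal{B}_{\bullet}))$, which composed with the simplicial equivalence $\C(\N(\mathcal{B}_{\bullet})) \simeq \mathcal{B}_{\bullet}$ gives a zigzag of levelwise weak equivalences from $\C \circ \N'$ to $\id$.

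I would then assemble this into a natural equivalence in $\loc \mathbf{Ch}^{\mathrm{R}}_{\geq 0}(\mathbf{St})$. By Lemma~\ref{hasadjointslemma} every chain complex of the form $\C(\mathcal{A}_{\bullet})$ with $\mathcal{A}_{\bullet} \in \mathbf{St}_{\mathbb{\Xi}}$ automatically lies in $\mathbf{Ch}^{\mathrm{R}}_{\geq 0}(\mathbf{St})$, so $\C(\N'(\mathcal{B}_{\bullet}))$ is an object of this subcategory. The intermediate objects appearing in the simplicial zigzag $\id \to \Gamma \circ \chi \leftarrow \mathcal{F} \to \tilde{\N} \circ \C \leftarrow \N \circ \C$ from Section~\ref{sectioncatdold} continue to have right adjoints in each degree whenever one starts from $\mathcal{B}_{\bullet} \in \mathbf{Ch}^{\mathrm{R}}_{\geq 0}(\mathbf{St})$, since the underlying Grothendieck construction $\chi(\mathcal{B}_{\bullet}) \to \N(\mathbb{N}^{\mathrm{op}})$ is biCartesian by hypothesis and the constructions $\Gamma$ and $\bbchi$ are compatible with this structure.

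The main obstacle is verifying that each map in the zigzag is actually a morphism in $\mathbf{Ch}^{\mathrm{R}}_{\geq 0}(\mathbf{St})$, that is, that the induced map of Grothendieck constructions preserves Cartesian edges and not merely coCartesian edges. For this I would invoke Proposition~\ref{fiberwiseeqlemma}: each comparison in the simplicial zigzag is a fiberwise equivalence of coCartesian fibrations over $\N(\mathbb{N}^{\mathrm{op}})$ which preserves coCartesian edges, and the proposition then automatically upgrades such a map to one preserving Cartesian edges whenever both fibrations also happen to be Cartesian. Combined with the observation that the comparison map $\C \circ \N' \to \C \circ \N$ induced by Lemma~\ref{nervecomparison} is likewise a fiberwise equivalence preserving coCartesian edges by naturality, this lifts the entire zigzag into $\mathbf{Ch}^{\mathrm{R}}_{\geq 0}(\mathbf{St})$. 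Passing to localizations then yields the desired natural equivalence $\C \circ \N' \simeq \id$ in $\loc \mathbf{Ch}^{\mathrm{R}}_{\geq 0}(\mathbf{St})$.
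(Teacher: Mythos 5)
Your proposal is correct and follows essentially the same route as the paper: extend the simplicial zigzag $\id \to \Gamma \circ \chi \leftarrow \C \circ \N$ by the comparison map $\C \circ \N' \to \C \circ \N$ coming from Lemma \ref{nervecomparison}, and then use Proposition \ref{fiberwiseeqlemma} to upgrade the whole diagram to a zigzag of weak equivalences valued in $\mathbf{Ch}^{\mathrm{R}}_{\geq 0}(\mathbf{St})$. One small slip: in your second paragraph you cite the zigzag $\id \to \mathbb{\Gamma} \circ \bbchi \leftarrow \mathcal{F} \to \tilde{\N} \circ \C \leftarrow \N \circ \C$, which is the one for $\N \circ \C \simeq \id$ and lives in $\mathbf{St}_{\mathbb{\Delta}}$; the relevant zigzag here is the one for $\C \circ \N \simeq \id$, whose only intermediate term is $\Gamma \circ \chi$, but this does not affect the substance of your argument.
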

\begin{proof}
Extending the zigzag diagram from the corresponding part of the proof of Theorem \ref{catdoldkan}, we obtain a diagram of weak equivalences
\begin{equation}
\nonumber
\begin{tikzcd}
\id \arrow[r] & \Gamma \circ \chi & \C \circ \N \arrow[l] & \C \circ \N' \arrow[l]
\end{tikzcd}
\end{equation}
of functors $\mathbf{Ch}_{\geq 0}^{\mathrm{R}}(\mathbf{St}) \to \mathbf{Ch}_{\geq 0}(\mathbf{St})$. It follows from repeated application of Proposition \ref{fiberwiseeqlemma} to the Grothendieck construction of these chain complexes that we may regard this diagram as a diagram of weak equivalences in $\Fun(\mathbf{Ch}_{\geq 0}^{\mathrm{R}}(\mathbf{St}),\mathbf{Ch}_{\geq 0}^{\mathrm{R}}(\mathbf{St}))$.
\end{proof}
\subsection{The equivalence $\N' \circ \C \simeq \id$}
We will construct a diagram of weak equivalences
\begin{equation}\label{Nprimezigzag}
\begin{tikzcd}
\id \arrow[r, "\eta"] & \mathbb{\Gamma} \circ \bbchi & \mathcal{F}' \arrow[l, "\alpha'"'] \arrow[r, "\beta'"] & \tilde{\N}' \circ \C & \N' \circ \C \arrow[l, "\theta'"']
\end{tikzcd}
\end{equation}
of endofunctors of $\mathbf{St}_{\mathbb{\Xi}}$. 

The $(\infty,2)$-categorical Grothendieck construction as well as the natural transformation $\eta:\id \to \mathbb{\Gamma} \circ \bbchi$ were already previously constructed for an arbitrary indexing $2$-category and $\eta$ is a pointwise weak equivalence by Lemma \ref{laxgrothendieckiso}.

\begin{definition}
The lax categorified Dwyer-Kan nerve $\tilde{\N}'(\mathcal{B}_{\bullet})$ of a chain complex $\mathcal{B}_{\bullet} \in \mathbf{Ch}^{\mathrm{R}}_{\geq 0}(\mathbf{St})$ is the duplicial stable $\infty$-category obtained by replacing the $(\infty,1)$-categorical Grothendieck construction in the definition of the nerve with the lax $(\infty,2)$-categorical version $\bbchi(\mathcal{B}_{\bullet}) \to \N^{\mathrm{sc}}(\mathbb{N}^{\mathrm{op}})$, where we regard $\mathbb{N}^{\mathrm{op}}$ as a $2$-category with discrete morphism categories.
\end{definition}
Just as in the simplicial case, Lemma \ref{laxnonlaxlemma} implies the existence of a pointwise weak equivalence between the lax nerve and the nerve.
\begin{lemma}
There is a weak equivalence
\begin{equation}
\nonumber
\theta': \tilde{\N}' \to \N'
\end{equation}
of functors $\mathbf{Ch}^{\mathrm{R}}_{\geq 0}(\mathbf{St}) \to \mathbf{St}_{\mathbb{\Xi}}$.
\end{lemma}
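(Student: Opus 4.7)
The plan is to proceed in direct analogy with the construction of $\theta: \tilde{\N} \to \N$ in the simplicial case, adding a verification of the extra $\pi$-Cartesian edge condition appearing in the definition of $\N'$.

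First I would apply Lemma \ref{laxnonlaxlemma}(3) with the $2$-category $\mathbb{N}^{\mathrm{op}}$ (regarded as a $2$-category with discrete morphism categories) and the $\mathbf{Cat}_{\infty}$-enriched functor $\mathcal{B}_{\bullet}: \mathbb{N}^{\mathrm{op}} \to \mathbf{Cat}_{\infty}$. This yields a natural map
\begin{equation} \nonumber
\chi(\mathcal{B}_{\bullet}) \to \bbchi(\mathcal{B}_{\bullet})
\end{equation}
over $\N(\mathbb{N}^{\mathrm{op}}) \cong \N^{\mathrm{sc}}(\mathbb{N}^{\mathrm{op}})$ that is a fiberwise equivalence of coCartesian fibrations. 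The crucial new ingredient relative to the simplicial case is that, since $\mathcal{B}_{\bullet} \in \mathbf{Ch}^{\mathrm{R}}_{\geq 0}(\mathbf{St})$, the fibration $\chi(\mathcal{B}_{\bullet})$ is additionally Cartesian. Invoking Proposition \ref{fiberwiseeqlemma}, this comparison is an equivalence of $\infty$-categories, $\bbchi(\mathcal{B}_{\bullet})$ is likewise biCartesian, and the map preserves Cartesian as well as coCartesian edges.

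Next I would precompose with $\N(\mathbb{N}^{\mathrm{op}}_{\langle n \rangle/})$ to obtain an equivalence between the ambient functor $\infty$-categories housing $\N'(\mathcal{B}_{\bullet})_n$ and $\tilde{\N}'(\mathcal{B}_{\bullet})_n$. Because the equivalence preserves both coCartesian edges (handling the colimit cube condition and the zero-object condition) and Cartesian edges (handling the third condition in Definition \ref{catduplicialnerve}), it carries the three defining conditions of $\N'$ to their $\bbchi$-counterparts, so it restricts to an equivalence between the full subcategories. Naturality of $\chi \to \bbchi$ in $\mathcal{B}_{\bullet}$ — together with the definition of morphisms in $\mathbf{Ch}^{\mathrm{R}}_{\geq 0}(\mathbf{St})$ as those chain maps whose induced map on Grothendieck constructions preserves Cartesian edges — and $2$-functoriality in $\langle n \rangle$ inherited from $\mathbb{\Xi} \to \mathbf{Cat}_{/\mathbb{N}^{\mathrm{op}}}$ then assemble these levelwise equivalences into a natural weak equivalence of functors $\mathbf{Ch}^{\mathrm{R}}_{\geq 0}(\mathbf{St}) \to \mathbf{St}_{\mathbb{\Xi}}$.

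The main point where the duplicial case requires more care than the simplicial one is verifying that the Cartesian edge condition on a diagram $A \in \N'(\mathcal{B}_{\bullet})_n$ transfers correctly under the $\chi \to \bbchi$ comparison; this is exactly the content of the second assertion of Proposition \ref{fiberwiseeqlemma} applied to the biCartesian fibration $\chi(\mathcal{B}_{\bullet})$, and is the sole reason we must restrict to $\mathbf{Ch}^{\mathrm{R}}_{\geq 0}(\mathbf{St})$ rather than all of $\mathbf{Ch}_{\geq 0}(\mathbf{St})$ for the duplicial comparison to go through.
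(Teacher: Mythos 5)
Your proposal is correct and follows essentially the same route as the paper, which offers no written proof beyond asserting that, just as in the simplicial case, Lemma \ref{laxnonlaxlemma} yields the pointwise weak equivalence; you simply make explicit the intended argument, including the one genuinely new point (using the second assertion of Proposition \ref{fiberwiseeqlemma} to transfer the $\pi$-Cartesian edge condition, which is exactly why the comparison is formulated over $\mathbf{Ch}^{\mathrm{R}}_{\geq 0}(\mathbf{St})$). The only cosmetic discrepancy is that the comparison map of Lemma \ref{laxnonlaxlemma}(3) naturally points from $\chi$ to $\bbchi$, hence induces $\N' \to \tilde{\N}'$ rather than the direction written in the lemma statement, but since it is an equivalence this is immaterial.
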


Finally, we introduce the functor $\mathcal{F}'$ in analogy to the functor $\mathcal{F}$:
\begin{definition}\label{fprimedef}
For $n \geq 0$ we introduce the simplicial set
\begin{equation}
\nonumber
\mathcal{M}'_n=\N^{\mathrm{sc}}(\mathbb{\Xi}^{(\mathrm{op},-)}_{\langle n \rangle /}) \coprod_{\N(\mathbb{N}^{\mathrm{op}}_{\langle n \rangle /})} \Delta^1 \times \N(\mathbb{N}^{\mathrm{op}}_{\langle n \rangle /})
\end{equation}
where the pushout is taken along the inclusion $\{1\} \times \id:\N(\mathbb{N}^{\mathrm{op}}_{ \langle n\rangle /}) \subset \Delta^1 \times \N(\mathbb{N}^{\mathrm{op}}_{\langle n \rangle /})$.
We have the inclusions
\begin{equation}
\nonumber
\begin{tikzcd}
{\N^{\mathrm{sc}}(\mathbb{\Xi}^{(\mathrm{op},-)}_{\langle n \rangle /}}) \arrow[r, "r"] & \mathcal{M}'_n & {\N(\mathbb{N}^{\mathrm{op}}_{ \langle n\rangle/})} \arrow[l, "s"']
\end{tikzcd}
\end{equation}
where $s=\{0\} \times \id$. The functor $\mathcal{F}': \mathbf{St}_{\mathbb{\Xi}} \to \mathbf{St}_{\mathbb{\Xi}}$ is then defined as follows: For a $2$-duplicial stable $\infty$-category $\mathcal{A}_{\bullet} \in \mathbf{St}_{\mathbb{\Xi}}$ the stable $\infty$-category
\begin{equation}
\nonumber
\mathcal{F}'(\mathcal{A}_{\bullet})_n \subset \Map_{\N^{\mathrm{sc}}(\mathbb{\Xi}^{(\mathrm{op},-)})}(\mathcal{M}'_n,\bbchi(\mathcal{A}_{\bullet}))
\end{equation}
is the full subcategory spanned by diagrams $X:\mathcal{M}'_n \to \bbchi(\mathcal{A}_{\bullet})$ satisfying the following conditions:
\begin{enumerate}[(F1)]
\item The functor $r^*X:\N^{\mathrm{sc}}(\mathbb{\Xi}^{(\mathrm{op},-)}_{\langle n \rangle/}) \to \bbchi(\mathcal{A}_{\bullet})$ maps edges corresponding to strictly commuting triangles to $\pi$-coCartesian edges in $\bbchi(\mathcal{A}_{\bullet})$.
%\item The functor $s^{*}A$ is an object of $\tilde{\N}' (\C(\mathcal{A}_{\bullet}))$.
\item The functor $s^*X$ maps each vertex of $\N(\mathbb{N}^{\mathrm{op}}_{\langle n \rangle /})$ that corresponds to a morphism $\tau: \langle k \rangle \to \langle n \rangle$, which is not injective on $\{0, \dots,k\}$, to a zero object in the fiber $\pi^{-1}(\langle k \rangle)$.
\item For a morphism $\sigma: \langle k \rangle \to \langle n \rangle$, which is injective on $\{0, \dots,k\}$, the cube
\begin{equation}
\nonumber
\begin{tikzcd}
\Delta^1 \times (\Delta^1)^k \arrow[r, "\id \times f_k^* \sigma"] & {\Delta^1 \times \N(\mathbb{N}^{\mathrm{op}}_{ \langle n \rangle /})} \arrow[r,"X"] & \bbchi(\mathcal{A}_{\bullet})
\end{tikzcd}
\end{equation}
is biCartesian in the fiber $\pi^{-1}(\langle k \rangle)$.
\end{enumerate}
\end{definition}
The condition (F1) ensures that restriction along $r:\N^{\mathrm{sc}}(\mathbb{\Xi}^{(\mathrm{op},-)}_{\langle n \rangle /}) \to \mathcal{M}_n'$ induces a well-defined natural transformation
\begin{equation}
\nonumber
\alpha':\mathcal{F}' \to \mathbb{\Gamma} \circ \bbchi.
\end{equation}
The following lemma follows by the same argument as the simplicial case.
\begin{lemma}
The natural transformation
\begin{equation}
\nonumber
\alpha':\mathcal{F}' \to \mathbb{\Gamma} \circ \bbchi.
\end{equation}
is a pointwise weak equivalence.
\end{lemma}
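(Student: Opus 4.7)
Fix $\mathcal{A}_\bullet \in \mathbf{St}_{\mathbb{\Xi}}$ and $n \geq 0$. The plan is to show that the restriction functor
\begin{equation}
\nonumber
\alpha'_{\mathcal{A}_\bullet,n}: \mathcal{F}'(\mathcal{A}_\bullet)_n \to \mathbb{\Gamma}(\bbchi(\mathcal{A}_\bullet))(\langle n\rangle)
\end{equation}
induced by $r:\N^{\mathrm{sc}}(\mathbb{\Xi}^{(\mathrm{op},-)}_{\langle n\rangle/}) \to \mathcal{M}'_n$ is a trivial Kan fibration, which is enough since this property passes to the simplicial mapping spaces constituting $\mathbb{\Gamma}$. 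The whole argument is a duplicial analogue of \cite[Proposition 4.5]{Dy21} dualized to our conventions, and I would organize it in the same style as the proof of Lemma~\ref{nervecomparison}.

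First, I would unpack the pushout definition of $\mathcal{M}'_n$: a diagram $X:\mathcal{M}'_n \to \bbchi(\mathcal{A}_\bullet)$ is the same datum as a diagram $Y=r^*X$ on $\N^{\mathrm{sc}}(\mathbb{\Xi}^{(\mathrm{op},-)}_{\langle n\rangle/})$ together with a prismatic extension $X':\Delta^1 \times \N(\mathbb{N}^{\mathrm{op}}_{\langle n\rangle/}) \to \bbchi(\mathcal{A}_\bullet)$ of the $\{1\}$-face $Y|\N(\mathbb{N}^{\mathrm{op}}_{\langle n\rangle/})$. The functor $\alpha'$ is the forgetting of $X'$, so it suffices to show that for $Y$ satisfying (F1) the $\infty$-category of admissible $X'$ satisfying (F2) and (F3) is contractible.

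The contractibility is established by an inductive filtration of the prism, analogous to the one in Lemma~\ref{nervecomparison}. I would introduce a nested family of full subcategories $E_k \subset \N(\mathbb{N}^{\mathrm{op}}_{\langle n\rangle/})$ for $k \geq n$, where $E_k$ consists of all non-injective $\tau$ together with those $\tau:\langle m\rangle \to \langle n\rangle$ with $\tau(m) \leq k$, and factor the inclusion $E_k \subset E_{k+1}$ through an auxiliary subcategory $E'_k$ that additionally contains those $\tau$ with $\tau(m)=k+1$ and $\tau(0)=k-n$. Building up $X'$ along this filtration, at each step the new values on the $\{0\}$-face are either forced by (F2) to be zero objects, or characterized by the biCartesian cube condition (F3) as a $\pi$-relative limit of already-constructed data. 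Each individual extension step is then a trivial Kan fibration by the dual of \cite[4.3.2.15]{Lu09}, and the total composite is therefore a trivial Kan fibration as desired.

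The main obstacle I anticipate is the bookkeeping in the fibered setting: the biCartesian condition (F3) is stated inside the fibers of $\bbchi(\mathcal{A}_\bullet) \to \N^{\mathrm{sc}}(\mathbb{\Xi}^{(\mathrm{op},-)})$, while the Kan extension criteria of \cite{Lu09} speak about absolute $\infty$-categories. Translating (F3) into a $\pi$-relative limit condition requires Lemma~\ref{laxnonlaxlemma}~(2) to localize to the fiber over $\langle k\rangle$, together with the characterization of biCartesian cubes in the stable $\infty$-category $\mathcal{A}_k$ as limit diagrams. The expanded slice category $\mathbb{\Xi}^{(\mathrm{op},-)}_{\langle n\rangle/}$ contains many more morphisms than its simplicial counterpart (reflecting the iterated shifts $t_n^{n+1}$), but these extra morphisms are already accounted for on the $Y$ side by (F1), so the induction on the prism proceeds by exactly the same mechanism as in the simplicial case; only the indexing set of cubes grows.
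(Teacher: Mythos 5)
Your proposal is correct and rests on the same underlying mechanism as the paper's proof --- identify $\alpha'$ at level $n$ as (a base change of) the restriction along $\Delta^{\{1\}} \times \N(\mathbb{N}^{\mathrm{op}}_{\langle n\rangle/}) \subset \Delta^1 \times \N(\mathbb{N}^{\mathrm{op}}_{\langle n\rangle/})$, reinterpret conditions (F2) and (F3) as relative Kan extension conditions, and conclude via \cite[4.3.2.15]{Lu09} that the restriction is a trivial Kan fibration --- but your decomposition of the extension problem is different and heavier than necessary. The paper factors the inclusion of the $\{1\}$-face into the prism through a \emph{single} intermediate full subcategory $K$, spanned by the $\{1\}$-face together with the vertices $(0,\sigma)$ for $\sigma$ not injective on $\{0,\dots,k\}$: condition (F2) says precisely that a diagram on $K$ is a $\pi$-left Kan extension of its $\{1\}$-face, condition (F3) says precisely that a diagram on the full prism is a $\pi$-right Kan extension of its restriction to $K$, and two applications of \cite[4.3.2.15]{Lu09} finish the argument. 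Your infinite filtration by the subcategories $E_k$, modeled on Lemma \ref{nervecomparison}, also works but buys nothing here: the stages do not interact, since every vertex $\sigma\circ f_k(a)$ of the cube attached to an injective $\sigma$ satisfies $(\sigma\circ f_k(a))(k)=\sigma(k)$, so each cube references only the $\{1\}$-face and zero objects placed at non-injective morphisms. In particular your auxiliary category $E'_k$ (singling out the $\tau$ with $\tau(0)=k-n$) is vestigial --- it is needed in Lemma \ref{nervecomparison} only because of the $\pi$-Cartesian edge condition in Definition \ref{catduplicialnerve}, and no analogous condition appears among (F1)--(F3) of Definition \ref{fprimedef}. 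One small caution: "the $\infty$-category of admissible $X'$ is contractible" is weaker than what you actually need and use; the operative statement is that each extension step is a trivial Kan fibration, which you do supply. Your closing remark that translating the fiberwise biCartesian condition (F3) into a $\pi$-relative (co)limit condition requires care is well taken; the paper elides this point as well.
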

\begin{proof}
For a $2$-duplicial stable $\infty$-category $\mathcal{A}_{\bullet} \in \mathbf{St}_{\mathbb{\Xi}}$ let $\pi:\bbchi (\mathcal{A_{\bullet}}|\mathbb{N}^{\mathrm{op}}) \to \N(\mathbb{N}^{\mathrm{op}})$ denote the lax Grothendieck construction and let $n \geq 0$. By definition, the $\infty$-category 
\begin{equation}\nonumber
\mathbb{\Gamma}(\bbchi(\mathcal{A}_{\bullet}))_n \subset \Map_{\N^{\mathrm{sc}}(\mathbb{\Xi}^{(\mathrm{op},-)})}(\N^{\mathrm{sc}}(\mathbb{\Xi}^{(\mathrm{op},-)}_{\langle n \rangle/}),\bbchi(\mathcal{A}_{\bullet}))
\end{equation}
is the full subcategory of functors satisfying condition (F1). We further introduce the $\infty$-category $S \subset \Map_{\N(\mathbb{N}^{\mathrm{op}})}(\Delta^1 \times \N(\mathbb{N}^{\mathrm{op}}_{\langle n \rangle /}),\bbchi(\mathcal{A}_{\bullet}|\mathbb{N}^{\mathrm{op}}))$ of functors satisfying conditions (F2) and (F3). These $\infty$-categories fit into the following pullback diagram:
\begin{equation}
\nonumber
\begin{tikzcd}
\mathcal{F}'(\mathcal{A}_{\bullet})_n \arrow[r] \arrow[d]  & S \arrow[d]                                                                                                                                                            \\
\mathbb{\Gamma}(\bbchi(\mathcal{A}_{\bullet}))_n \arrow[r] & {\Map_{\N(\mathbb{N}^{\mathrm{op}})}(\Delta^{\{1\}} \times \N(\mathbb{N}^{\mathrm{op}}_{\langle n \rangle /}),\bbchi(\mathcal{A}_{\bullet}|\mathbb{N}^{\mathrm{op}}))}
\end{tikzcd}
\end{equation}
We will show that the right vertical map is a trivial Kan fibration, which implies that the left map is one as well.
Let $K \subset \Delta^1 \times \N(\mathbb{N}^{\mathrm{op}}_{ \langle n \rangle /})$ denote the full subcategory spanned by the vertices of $\{1\} \times \N(\mathbb{N}^{\mathrm{op}}_{\langle n \rangle /})$ and those vertices of $\{0\} \times \N(\mathbb{N}^{\mathrm{op}}_{ \langle n \rangle /})$ corresponding to morphisms $\langle k \rangle \to \langle n \rangle$ in $\mathbb{\Xi}$ which are not injective on $\{0, \dots, k\}$.
\begin{enumerate}[(1)]
\item A functor
\begin{equation}
\nonumber
Y:K \to \bbchi(\mathbf{A}_{\bullet}|\mathbb{N}^{\mathrm{op}})
\end{equation}
over $\N(\mathbb{N}^{\mathrm{op}})$ is a $\pi$-left Kan extension of its restriction $Y|\{1\} \times \N(\mathbb{N}^{\mathrm{op}}_{\langle n \rangle /})$ if and only if $Y(0,\sigma)$ is a zero object in $\pi^{-1}(\langle k \rangle)$ for any $\sigma: \langle k \rangle \to \langle n \rangle$ in $\mathbb{\Xi}$, which is not injective on $\{0, \dots,k\}$.
\item
A functor
\begin{equation}
\nonumber
Z:\Delta^1 \times \N(\mathbb{N}^{\mathrm{op}}_{\langle n \rangle /}) \to \bbchi (\mathcal{A}_{\bullet}|\mathbb{N}^{\mathrm{op}})
\end{equation}
over $\N(\mathbb{N}^{\mathrm{op}})$ is a $\pi$-right Kan extension of $Z|K$ if and only if for every morphism $\langle k \rangle \to \langle n \rangle$, which is injective on $\{0, \dots, k\}$, the cube
\begin{equation}
\nonumber
\begin{tikzcd}
\Delta^1 \times (\Delta^1)^{k} \arrow[r, "\id \times (\sigma \circ f_k)"] & \Delta^1 \times \N(\mathbb{N}^{\mathrm{op}}_{\langle n\rangle /}) \arrow[r, "Z"] & \bbchi(\mathcal{A}_{\bullet})
\end{tikzcd}
\end{equation}
is biCartesian in the fiber $\pi^{-1}(\langle k \rangle)$.
\end{enumerate}

These two observations, along with a twofold application of \cite[4.3.2.15]{Lu09}, imply that restriction indeed induces a trivial Kan fibration
\begin{equation}
\nonumber
S \to \Map_{\N(\mathbb{N}^{\mathrm{op}})}(\Delta^{\{1\}} \times \N(\mathbb{N}^{\mathrm{op}}_{\langle n \rangle /}),\bbchi(\mathcal{A}_{\bullet}|\mathbb{N}^{\mathrm{op}}))
\end{equation}
\end{proof}
Finally, we must show that
\begin{equation}
\nonumber
\beta':\mathcal{F}' \to \tilde{\N} \circ \C
\end{equation}
is well-defined. The proof is again largely a mutatis mutandis version of the simplicial case.
\begin{lemma}
For any $2$-duplicial stable $\infty$-category $\mathcal{A}_{\bullet} \in \mathbf{St}_{\mathbb{\Xi}}$ restriction along $s$ induces a morphism
\begin{equation}
\nonumber
s^*:\mathcal{F}'(\mathcal{A}_{\bullet}) \to \tilde{\N}'(\C(\mathcal{A}_{\bullet})).
\end{equation}
\end{lemma}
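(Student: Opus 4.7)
The plan is to verify that for any $X \in \mathcal{F}'(\mathcal{A}_\bullet)_n$ the restriction $s^*X: \N(\mathbb{N}^{\mathrm{op}}_{\langle n \rangle/}) \to \bbchi(\mathcal{A}_\bullet)$ factors through the sub-simplicial-set $\bbchi(\C(\mathcal{A}_\bullet)) \subset \bbchi(\mathcal{A}_\bullet)$ and satisfies the three conditions defining $\tilde{\N}'(\C(\mathcal{A}_\bullet))_n$. The overall strategy mirrors the simplicial argument of \cite[Section 4]{Dy21} for the first two conditions, but a genuinely new argument is required for the third condition, which encodes the duplicial structure.

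First I would check that for $\sigma:\langle m \rangle \to \langle n \rangle$ injective on $\{0,\ldots,m\}$ the value $s^*X(\sigma)$ lies in $\bar{\mathcal{A}}_m$. Apply (F3) to $\sigma$: restricting its biCartesian cube to the face $\{0\} \times (\Delta^1)^m$ produces a cube in $\mathcal{A}_m$ with initial vertex $s^*X(\sigma)$ and remaining vertices forced by (F2) to be zero whenever $\sigma \circ f_m(a)$ is non-injective; restricting to $\{1\} \times (\Delta^1)^m$ identifies the cube with the one defining $\pi_m$ (Definition \ref{cdef}), evaluated on $r^*X(\sigma)$. BiCartesianity then identifies $s^*X(\sigma)$ with $\pi_m(r^*X(\sigma))$ up to equivalence, which lies in $\bar{\mathcal{A}}_m$ by Lemma \ref{piadj}. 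Together with the zero values on non-injective $\sigma$ from (F2), this shows $s^*X$ factors through $\bbchi(\C(\mathcal{A}_\bullet))$, and simultaneously verifies condition (1) of $\tilde{\N}'$. Condition (2), the colimit cube condition for the standard cubes $q_m$, is obtained by a mutatis mutandis version of the simplicial argument: (F3) forces the values over the relevant face of $q_m$ to be a colimit diagram in $\bar{\mathcal{A}}_m$.

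The main obstacle is condition (3), which is the defining feature of the duplicial case. For $\alpha:\langle m \rangle \to \langle n \rangle$ injective as a map $\mathbb{Z} \to \mathbb{Z}$, one must show the edge $s^*X(\tilde{\alpha}) \to s^*X(\alpha)$ induced by the triangle $\alpha = \tilde{\alpha} \circ \partial_{m+1}$ is $\pi$-Cartesian in $\bbchi(\C(\mathcal{A}_\bullet))$. The approach is to use Lemma \ref{hasadjointslemma}, which identifies the right adjoint of $d_{m+1}:\bar{\mathcal{A}}_{m+1} \to \bar{\mathcal{A}}_m$ as $\pi_{m+1} \circ s_{m+1}$ with counit realized by the cube $d_{m+1} \circ s_{m+1}^* c_{m+1}$. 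Cartesian edges over $d_{m+1}$ are then characterized, via the preliminary results on the Grothendieck construction of chain complexes, as those equivalent to components of this counit. The strategy is to exhibit our edge as such a counit component using three inputs: (F1) applied to the strictly commuting triangle in $\mathbb{\Xi}^{(\mathrm{op},-)}_{\langle n \rangle /}$ relating $\alpha$ to $\tilde{\alpha} \circ \sigma_{m+1}$ through the new degeneracy (yielding a coCartesian edge realizing $s_{m+1}$ on $s^*X(\alpha)$); (F3) applied to $\tilde{\alpha}$ (which realizes the application of $\pi_{m+1}$); and the first step of the proof identifying $s^*X(\tilde{\alpha})$ correctly.

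The hard part will be pinning down the correct strictly commuting triangle in $\mathbb{\Xi}^{(\mathrm{op},-)}_{\langle n \rangle/}$ for use with (F1). The condition $\tilde{\alpha}(m+1) = \alpha(0) + n+1$ is precisely what allows the extension $\tilde{\alpha}$ to be expressed via the shift $t_{n+1}$ and hence via $s_{m+1} = T_{m+1} \circ s_0$ in $\mathbb{\Xi}^{(\mathrm{op},-)}$, so the correct triangle exists only because we are in the duplex rather than the simplex category. Careful bookkeeping between the $\mathbb{N}^{\mathrm{op}}$-projection inherited from the definition of the nerve and the $\mathbb{\Xi}^{(\mathrm{op},-)}$-projection along which $\bbchi(\mathcal{A}_\bullet)$ is built, together with Proposition \ref{fiberwiseeqlemma} to propagate Cartesianity between equivalent fibrations, should close the argument.
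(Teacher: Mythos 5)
Your proposal is correct in outline and follows the same overall strategy as the paper: verify the three conditions of Definition \ref{catduplicialnerve} for $s^*X$ using (F1)--(F3), with Lemma \ref{hasadjointslemma} supplying the characterization of Cartesian edges needed for the third condition. Two points of comparison. For the normalization condition you identify $s^*X(\sigma)$ with $\pi_m(r^*X(\sigma))$ by matching the (F3) cube against the cube $c_m$ of Definition \ref{cdef} and then invoke Lemma \ref{piadj}; the paper instead argues directly that $d_i(s^*X(\sigma))\simeq 0$ because $d_i$ is exact and, by (F1), sends every edge of the (F3) limit cube parallel to the $i$th axis to an equivalence. Both work, and your identification is the sharper statement --- it is in any case what one needs later to recognize $s^*X(\tilde{\alpha})\to s^*X(\alpha)$ as a counit component, and your handling of that last condition via $\tilde{\alpha}=\alpha\circ\sigma_{m+1}$ and (F1) is actually more explicit than the paper's, which simply cites Lemma \ref{hasadjointslemma}. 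On the other hand, your one-sentence treatment of the colimit-cube condition underestimates where the work lies: (F3) only controls the front face $X|\Delta^1\times f_k^*\sigma$ of the relevant cube, whereas the back face, which lives in the fiber over $\langle k-1\rangle$ and involves the simplices $\sigma\circ f_k(a)\circ\partial_k$, is not directly covered by (F3); the paper devotes a separate Kan-extension argument (restricting $X$ to $\Delta^1\times\N(\mathbb{\Xi}(\langle k-1\rangle,\langle n\rangle))$) to it. Your deferral to the simplicial case is legitimate since the analogous step occurs there, but the sentence as written attributes to (F3) more than it delivers.
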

\begin{proof}
Let $\mathcal{A} \in \mathbf{St}_{\mathbb{\Xi}}$ and $n \geq 0$. Given an object $X \in \mathcal{F}'(\mathcal{A}_{\bullet})_n$ let $A:\N(\mathbb{N}^{\mathrm{op}}_{\langle n \rangle /}) \to \bbchi(\mathcal{A}_{\bullet})$ denote its restriction along $s$.

We first show the following: For any $0 \leq i <n$ and a lift
\begin{equation}
\nonumber
d_i:\bbchi(\mathcal{A}_{\bullet})_{\langle k \rangle} \to \bbchi(\mathcal{A}_{\bullet})_{\langle k-1 \rangle}
\end{equation}
of the face map $d_i: \langle k \rangle \to \langle k-1 \rangle$ in $\mathbb{\Xi}^{(\mathrm{op},-)}$ with the respect to the locally coCartesian fibration $\bbchi(\mathcal{A}_{\bullet}) \to \N^{\mathrm{sc}}(\mathbb{\Xi}^{(\mathrm{op},-)})$, we have $d_i(A(\sigma)) \simeq 0$ for all $\sigma: \langle k \rangle \to \langle n \rangle$.

For $\sigma: \langle k \rangle \to \langle n \rangle$, which is not injective on $\{0, \dots,k\}$, this follows from condition (F2). For $\sigma: \langle k \rangle \to \langle n \rangle$, which is injective on $\{0, \dots,k \}$, the object $A(\sigma)$ is the initial vertex of the cube 
\begin{equation}
\nonumber
\begin{tikzcd}
\Delta^1 \times (\Delta^1)^{k} \arrow[r, "\id \times (\sigma \circ f_k)"] & \Delta^1 \times \N(\mathbb{N}^{\mathrm{op}}_{\langle n\rangle /}) \arrow[r, "X"] & \bbchi(\mathcal{A}_{\bullet})
\end{tikzcd}
\end{equation}
which is a limit cube in $\pi^{-1}(\langle k \rangle)$ by (F3). Condition (F1) implies that each edge of this cube which is parallel to the $i$th coordinate axis gets mapped under $d_i$ to an equivalence in $\bbchi(\mathcal{A}_{\bullet})_{\langle k-1 \rangle}$. Since $d_i$ preserves limits, it follows that $d_i(A({\sigma})) \simeq 0$.

Let $\pi:\bbchi(\mathcal{A}_{\bullet}) |\N(\mathbb{N}^{\mathrm{op}}) \to \N(\mathbb{N}^{\mathrm{op}})$ denote the coCartesian fibration obtained by restricting $\bbchi(\mathcal{A}_{\bullet})$. We next show that for any morphism $\sigma:\langle k \rangle \to \langle n \rangle$ in $\mathbb{\Xi}$ which is injective on $\{0, \dots,k\}$ the cube
\begin{equation}
\nonumber
\begin{tikzcd}
\Delta^1 \times (\Delta^1)^{k+1} \arrow[r, "\id \times (\sigma \circ q_k)"] & \Delta^1 \times \N(\mathbb{N}^{\mathrm{op}}_{\langle n\rangle /}) \arrow[r, "X"] & \bbchi(\mathcal{A}_{\bullet})|\N(\mathbb{N}^{\mathrm{op}})
\end{tikzcd}
\end{equation}
is a $\pi$-colimit cube.

The condition (F3) implies that the front face $X| \Delta^1 \times f_k^* \sigma$ of this cube is biCartesian in the fiber $\bbchi(\mathcal{A}_{\bullet})_{\langle k \rangle}$. It therefore suffices to show that the back face $C=X |\Delta^1 \times b_k^* \sigma$ is biCartesian in $\bbchi(\mathcal{A}_{\bullet})_{\langle k-1 \rangle }$ as well. By (F3), the face $F=X|\Delta^1 \times f_{k-1}^*(\sigma \circ \partial_k)$ of $C$ is biCartesian in $\bbchi(\mathcal{A}_{\bullet})_{\langle k-1 \rangle}$. It therefore suffices to show that the opposite face is biCartesian in the fiber as well. For this we analyze the restriction of $X$ to $R=\Delta^1 \times \N(\mathbb{\Xi}(\langle k-1 \rangle, \langle n \rangle)) \subset \Delta^1 \times \N^{\mathrm{sc}}(\mathbb{\Xi}^{(\mathrm{op},-)}_{\langle n \rangle /})$. Let $K \subset \Delta^1 \times \N(\mathbb{\Xi}^{(\mathrm{op},-)}_{\langle n \rangle /})$ denote the nerve of the poset spanned by $\{1\} \times \mathbb{\Xi}(\langle k-1 \rangle, \langle n \rangle)$ and the elements of $ \{0\} \times \mathbb{\Xi}(\langle k-1 \rangle, \langle n \rangle)$ whose second component is a map that is not injective on $\{0, \dots, k-1\}$. By (F3), the restriction $X|R$ is a right Kan extension of $X|K$ in the fiber. Letting $K' \subset R$ denote the nerve of the subposet spanned by all  elements except for $\{0\} \times \sigma \circ \partial_{k-1}$, the functor $X|R$ is a right Kan extension of $X|K'$ as well. This implies the claim.

We now show that for $\sigma:\langle k \rangle \to \langle n \rangle$  injective on $\{0, \dots,k\}$ the cube $A|q_k^* \sigma$ is a $\pi$-colimit cube. It is the face of the $\pi$-colimit cube $X|\Delta^1 \times q_k^* \sigma$ obtained by restriction along $\{0 \} \times q_k^* \sigma$, so it suffices to show that the face $X|\{1\} \times q_k^* \sigma$ is a $\pi$-colimit cube. This is true since all edges of the morphism $  X|\{1\} \times f_k^* \sigma \to X|\{1\} \times b_k^* \sigma$ are $\pi$-coCartesian.

Finally, we must show that for $\sigma:\langle k \rangle \to \langle n \rangle$, which is injective as a map $\mathbb{Z} \to \mathbb{Z}$, the diagram $A$ maps the edge of $\{0 \} \times \N(\mathbb{N}^{\mathrm{op}}_{\langle n \rangle /})$ induced by the strictly commutative triangle
\begin{equation}
\nonumber
\begin{tikzcd}
{\langle k \rangle} \arrow[r, "\partial_{k+1}"] \arrow[rd, "\sigma"'] & {\langle k+1 \rangle} \arrow[d, "\tilde{\sigma}"] \\
                                                         & {\langle n \rangle}                              
\end{tikzcd}
\end{equation}
 to a $\pi$-Cartesian edge of $\bbchi(\mathcal{A}_{\bullet})|\N(\mathbb{N}^{\mathrm{op}})$, where $\tilde{\sigma}(k+1)=\sigma(0)+n+1$. This follows from the explicit description of Cartesian edges in $\chi(\C(\mathcal{A}_{\bullet})) \simeq \bbchi(\C(\mathcal{A}_{\bullet}))$ given in Lemma \ref{hasadjointslemma}.
\end{proof}
\begin{theorem}
There is a natural equivalence $\N' \circ \C \simeq \id$ of endofunctors of the $\infty$-category of $2$-duplicial stable $\infty$-categories $\loc \mathbf{St}_{\mathbb{\Xi}}$.
\end{theorem}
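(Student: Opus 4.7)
The plan is to combine the zigzag~(\ref{Nprimezigzag}) of natural transformations into the claimed equivalence inside $\Fun(\loc \mathbf{St}_{\mathbb{\Xi}}, \loc \mathbf{St}_{\mathbb{\Xi}})$. Three of its four legs have already been established as pointwise weak equivalences: $\eta$ by Lemma~\ref{laxgrothendieckiso}, $\theta'$ by the preceding lemma comparing the lax and ordinary duplicial nerves, and $\alpha'$ by the pullback argument of the lemma immediately before. The only remaining task is to prove that $\beta' = s^* : \mathcal{F}' \to \tilde{\N}' \circ \C$ is also a pointwise weak equivalence; granting this, composing the zigzag and inverting the backward-pointing legs inside the localization delivers the desired equivalence $\id \simeq \N' \circ \C$.

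To prove $\beta'$ is a pointwise weak equivalence, I would fix $\mathcal{A}_\bullet \in \mathbf{St}_{\mathbb{\Xi}}$ and $n \geq 0$, and show that
\[
s^* : \mathcal{F}'(\mathcal{A}_\bullet)_n \to \tilde{\N}'(\C(\mathcal{A}_\bullet))_n
\]
is a trivial Kan fibration. Well-definedness was just established in the preceding lemma, so it remains to invert the restriction. This is done by a two-stage Kan extension argument, following the template of \cite[Proposition~4.6]{Dy21}: starting from a diagram $A$ on $\N(\mathbb{N}^{\mathrm{op}}_{\langle n \rangle /})$ satisfying the conditions of Definition~\ref{catduplicialnerve}, first extend along the inclusion $\{0\} \times \N(\mathbb{N}^{\mathrm{op}}_{\langle n \rangle /}) \hookrightarrow \Delta^1 \times \N(\mathbb{N}^{\mathrm{op}}_{\langle n \rangle /})$ by forcing the cubes of (F3) to be biCartesian in each fiber, then extend further from $\{1\} \times \N(\mathbb{N}^{\mathrm{op}}_{\langle n \rangle /})$ along the pushout defining $\mathcal{M}'_n$ to the scaled nerve $\N^{\mathrm{sc}}(\mathbb{\Xi}^{(\mathrm{op},-)}_{\langle n \rangle /})$ by requiring, per (F1), that strictly commutative triangles be sent to $\pi$-coCartesian edges of $\bbchi(\mathcal{A}_\bullet)$. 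Each stage should be shown to be a trivial Kan fibration via a suitable application of \cite[4.3.2.15]{Lu09}, after identifying the extensions as relative Kan extensions in the appropriate sense.

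The principal obstacle, absent in the simplicial case, is the additional $\pi$-Cartesian edge condition in the third bullet of Definition~\ref{catduplicialnerve}, which is imposed on diagrams in $\tilde{\N}'(\C(\mathcal{A}_\bullet))_n$ for each morphism $\alpha : \langle m \rangle \to \langle n \rangle$ that is injective as a map $\mathbb{Z} \to \mathbb{Z}$. One has to verify both that the extension $X \in \mathcal{F}'(\mathcal{A}_\bullet)_n$ constructed above always restricts under $s^*$ to a diagram sending the relevant edges to $\pi$-Cartesian morphisms, and conversely that this condition imposes no extra choice in the extension procedure. The key is the explicit description of the Cartesian edges of $\chi(\C(\mathcal{A}_\bullet))$ given in the proof of Lemma~\ref{hasadjointslemma}, which realises them via the counits of the adjunctions $d_{m+1} \dashv \pi_{m+1} \circ s_{m+1}$ through the cubes $s_{m+1}^* c_{m+1}$. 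Because these counits are encoded in exactly the strictly commutative triangles of $\mathbb{\Xi}^{(\mathrm{op},-)}_{\langle n \rangle /}$ that condition (F1) promotes to $\pi$-coCartesian edges of $\bbchi(\mathcal{A}_\bullet)$, the two conditions are automatically compatible, and the Kan extension argument then delivers the required matching between Cartesian edges on both sides. With $\beta'$ established as a pointwise weak equivalence, the zigzag descends to the desired equivalence $\id \simeq \N' \circ \C$ in $\loc \mathbf{St}_{\mathbb{\Xi}}$.
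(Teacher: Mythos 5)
Your reduction of the theorem to the single claim that $\beta'$ is a pointwise weak equivalence is exactly the paper's skeleton: the zigzag~(\ref{Nprimezigzag}) with $\eta$, $\alpha'$, $\theta'$ already handled. Where you diverge is in how that last claim is established. You propose to prove directly that $s^*:\mathcal{F}'(\mathcal{A}_{\bullet})_n \to \tilde{\N}'(\C(\mathcal{A}_{\bullet}))_n$ is a trivial Kan fibration by redoing the two-stage relative Kan extension argument of \cite[Proposition 4.6]{Dy21} in the duplicial setting. The paper never does this. Instead it applies the forgetful functor $U:\mathbf{St}_{\mathbb{\Xi}} \to \mathbf{St}_{\mathbb{\Delta}}$ to the whole zigzag, maps the result to the already-established simplicial zigzag for $U(\mathcal{A}_{\bullet})$, observes that the rightmost vertical comparison $U \circ \N' \circ \C \to \N \circ \C \circ U$ is a weak equivalence by Lemma~\ref{nervecomparison}, and then propagates two-out-of-three through the ladder to conclude that $\beta'$ is a weak equivalence. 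Since weak equivalences in $\mathbf{St}_{\mathbb{\Xi}}$ are detected levelwise, hence by $U$, this suffices. The trade-off is clear: the paper's route front-loads all the duplicial-specific work into Lemma~\ref{nervecomparison} (the trivial Kan fibration $\N'(\mathcal{B}_{\bullet})_n \to \N(\mathcal{B}_{\bullet})_n$) and then recycles the simplicial proof wholesale, whereas your route would have to re-engage with the lax undercategory $\mathbb{\Xi}^{(\mathrm{op},-)}_{\langle n \rangle/}$ directly.

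The gap in your version is precisely at the point you flag as "the principal obstacle" and then dispose of in one sentence. It is not enough to observe that $s^*X$ always satisfies the $\pi$-Cartesian condition of Definition~\ref{catduplicialnerve} (that is the content of the preceding well-definedness lemma); for the trivial-fibration claim you must show that \emph{every} diagram $A \in \tilde{\N}'(\C(\mathcal{A}_{\bullet}))_n$ arises as such a restriction, i.e.\ that the Cartesian condition on $A$ is exactly what is needed for the second-stage extension over the degeneracies $s_{m+1}$ in $\mathbb{\Xi}^{(\mathrm{op},-)}_{\langle n \rangle/}$ to exist as a left Kan extension satisfying (F1). If the extension existed for an $A$ not satisfying that condition, $s^*$ could not hit it; if the condition were insufficient, $s^*$ would not be surjective onto $\tilde{\N}'(\C(\mathcal{A}_{\bullet}))_n$ but only onto a proper subcategory. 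Your appeal to the counits of $d_{m+1} \dashv \pi_{m+1}\circ s_{m+1}$ from Lemma~\ref{hasadjointslemma} identifies the right ingredient, but the assertion that "the two conditions are automatically compatible" is the entire theorem at that point and would need a genuine argument — one that the simplicial template of \cite[Proposition 4.6]{Dy21} does not supply, since $\mathbb{\Delta}^{(\mathrm{op},-)}_{[n]/}$ has no analogue of the extra degeneracy. Either carry out that verification in full, or adopt the paper's comparison-ladder argument, which avoids it.
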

\begin{proof}
We have constructed the zigzag (\ref{Nprimezigzag}) of transformations in $\Fun(\mathbf{St}_{\mathbb{\Xi}},\mathbf{St}_{\mathbb{\Xi}})$ and shown that $\eta$, $\alpha'$ and $\theta'$ are weak equivalences. It remains to show that $\beta'$ is also a weak equivalence. Let $U:\mathbf{St}_{\mathbb{\Xi}} \to \mathbf{St}_{\mathbb{\Delta}}$ denote the forgetful functor. Our zigzag fits naturally into the commutative diagram
\begin{equation}
\nonumber
\begin{tikzcd}
U \arrow[d, "\id"'] \arrow[r, "\eta"] & U \circ \mathbb{\Gamma} \circ \bbchi \arrow[d] & U \circ\mathcal{F}' \arrow[d] \arrow[l, "\alpha'"'] \arrow[r, "\beta'"] & U \circ \tilde{\N}'\circ \C \arrow[d] & U \circ \N' \circ \C \arrow[d, "\simeq"] \arrow[l, "\theta'"'] \\
U \arrow[r, "\eta"]                    & \mathbb{\Gamma} \circ \bbchi \circ U           & \mathcal{F} \circ U \arrow[r, "\beta"] \arrow[l, "\alpha"']             & \tilde{\N}\circ \C \circ U            & \N \circ \C \circ U \arrow[l, "\theta"']                      
\end{tikzcd}
\end{equation}
of transformations of functors $\mathbf{St}_{\mathbb{\Xi}} \to \mathbf{St}_{\mathbb{\Delta}}$, where the right vertical arrow is a weak equivalence by Lemma \ref{nervecomparison}. By the proof of the categorified Dold-Kan correspondence, each transformation in the bottom row is a weak equivalence. It follows by repeated application of the two out of three property of weak equivalences that every arrow in this diagram is a weak equivalence. In particular, $\beta'$ is a weak equivalence.
\end{proof}
\section{Outlook}
Proceding from the categorification of the Dwyer-Kan correspondence, one naturally wishes to categorify the correspondences obtained by restriction in the classical case. In efforts to categorify Lemma \ref{cyclicequation}, the fibers and cofibers of the unit and counit of the adjunctions
\begin{equation}
\nonumber
d:\C(\mathcal{A}_{\bullet})_n \leftrightarrow \C(\mathcal{A}_{\bullet})_{n-1}:\delta
\end{equation}
take the place of the terms $\id- d \circ \delta$ and $\id-\delta \circ d$. For instance, by definition of the adjoint $\delta:\C(\mathcal{A}_{\bullet})_{0} \to \C(\mathcal{A}_{\bullet})_{1}$, we have a biCartesian square of functors
\begin{equation}
\nonumber
\begin{tikzcd}
d \circ \delta \arrow[d, "\epsilon"'] \arrow[r] & 0 \arrow[d] \\
\id_{\C(\mathcal{A})_0} \arrow[r]                   & T_0        
\end{tikzcd}
\end{equation}
exhibiting the shift in degree $0$ as the cofiber of the counit.

As mentioned in the introduction, we expect further study in this direction, which is in progress, to yield the following result:
\begin{conjecture*}There is an equivalence of $\infty$-categories between the $\infty$-category of paracyclic stable $\infty$-categories and the $\infty$-category of connective spherical complexes of stable $\infty$-categories. 
\end{conjecture*}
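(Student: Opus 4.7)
The plan is to deduce the conjecture by restricting the equivalence of Theorem \ref{catdwyerkantheorem} to appropriate full subcategories on each side. On the paracyclic side, $\mathbf{St}_{\mathbb{\Lambda}_{\infty}}$ should be defined as the full subcategory of $\mathbf{St}_{\mathbb{\Xi}}$ spanned by those $2$-duplicial stable $\infty$-categories whose shifts $T_n : \mathcal{A}_n \to \mathcal{A}_n$ are equivalences for all $n \geq 0$. On the chain complex side, a \emph{connective spherical complex} should be a $\mathcal{B}_{\bullet} \in \mathbf{Ch}^{\mathrm{R}}_{\geq 0}(\mathbf{St})$ such that each adjunction $d \dashv \delta$ is spherical in the sense of \cite{DKSS21}; equivalently, the twist functors $\mathrm{cof}(d \circ \delta \to \id_{\mathcal{B}_n})$ and $\mathrm{fib}(\id_{\mathcal{B}_n} \to \delta \circ d)$ (the stable replacements of $\id - d \circ \delta$ and $\id - \delta \circ d$ from Lemma \ref{cyclicequation}) are equivalences in each degree.

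The core task is to establish a categorified analogue of Lemma \ref{cyclicequation} in this setting. Concretely, I would construct, for each $2$-duplicial stable $\infty$-category $\mathcal{A}_{\bullet}$ and each $n \geq 0$, a canonical equivalence in $\Fun(\C(\mathcal{A}_{\bullet})_n, \C(\mathcal{A}_{\bullet})_n)$ between the iterated shift $T_n^{n+1}$ and an $(n+1)$-fold composite of cofibers of counits precomposed with an $n$-fold composite of fibers of units, as hinted at in the Outlook with the biCartesian square exhibiting $T_0$ as the cofiber of $\epsilon: d \delta \to \id$. Following the strategy of Lemma \ref{cyclicequation}, this formula should be proven universally: using the categorified Dwyer-Kan correspondence, the natural transformation $T_n^{n+1} \Rightarrow \id$ is controlled by its value on the free connective chain complex with right adjoints on one generator in degree $n$, where the comparison reduces to verifying a finite diagram built from the cube $f_n$ and the duplicial structure.

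Once this coherent categorified formula is in place, an $\infty$-categorical version of Lemma \ref{abliangroupcalculation}, exploiting stability in place of $\mathbf{Ab}$-linearity, will show that the cofiber of $\epsilon$ is an equivalence in degree $n$ if and only if the fiber of $\eta$ is an equivalence in degree $n+1$. Combined with the formula, this yields the equivalence between paracyclicity (invertibility of every $T_n$, equivalently of every $T_n^{n+1}$) and sphericity (vanishing of all the associated twist functors). The conjecture then follows by restricting Theorem \ref{catdwyerkantheorem} to these full subcategories.

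The hard part will be the coherent $\infty$-categorical construction and comparison underlying the categorified Lemma \ref{cyclicequation}. In the abelian case the identity is verified on a single element, but here it requires a natural equivalence of two composite endofunctors, assembled from iterated (co)fiber sequences, whose coherence data encodes the combinatorics of the cubes $f_n$ together with all compatibilities with face and degeneracy maps. Producing this data explicitly is likely infeasible; I would instead argue indirectly, reducing to the universal free complex via representability (as in Lemma \ref{cyclicequation}) and then performing the verification inside a presentable stable $\infty$-category where both sides are computed as finite colimits of explicit diagrams.
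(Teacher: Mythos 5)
First, a point of order: the paper does not prove this statement. It is stated explicitly as a conjecture in the Outlook, with the remark that the necessary further study ``is in progress,'' so there is no proof in the paper to compare yours against. Your proposal is the natural strategy --- and it matches the direction the Outlook gestures at, namely replacing $\id - d\circ\delta$ and $\id - \delta\circ d$ by the cofiber of the counit and the fiber of the unit of $d \dashv \delta$ --- but as written it is a plan rather than a proof, and you concede as much: the coherent categorification of Lemma \ref{cyclicequation} is exactly the open content of the conjecture. Two specific steps deserve scrutiny beyond the general ``coherence is hard'' caveat. The reduction to a free object in Lemma \ref{cyclicequation} rests on the fact that $F^n$ corepresents evaluation at degree $n$, and on the conclusion being a \emph{polynomial identity} $T_n^{n+1} = p_n(d\circ\delta) + q_n(\delta\circ d)$; a sum of endofunctors has no direct meaning in $\Fun(\C(\mathcal{A}_\bullet)_n,\C(\mathcal{A}_\bullet)_n)$, so the categorified statement must instead produce a finite filtration of $T_n^{n+1}$ with specified associated graded pieces, together with a corepresentability statement for exact functors out of a ``free'' object of $\mathbf{Ch}^{\mathrm{R}}_{\geq 0}(\mathbf{St})$ --- a universal property in an $(\infty,2)$-categorical setting that you have not formulated, let alone verified.

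The more serious gap is your proposed categorification of Lemma \ref{abliangroupcalculation}. In $\mathbf{Ab}$, injectivity and surjectivity of $\id_A - g\circ f$ and $\id_B - f\circ g$ are equivalent by an elementwise computation, so invertibility of the twist is equivalent to invertibility of the cotwist. For an adjunction of stable $\infty$-categories this equivalence \emph{fails}: by the two-out-of-four theorem for spherical adjunctions (see \cite{DKSS21}), invertibility of the twist alone does not imply invertibility of the cotwist, and a spherical adjunction is by definition one in which both hold. Consequently the implication ``all $T_n$ invertible $\Rightarrow$ all twists and cotwists invertible,'' which in the abelian case is extracted from Lemma \ref{abliangroupcalculation} via the injective/surjective splitting, cannot be obtained by mimicking that argument; it would have to be proven by a genuinely different mechanism (presumably exploiting the interaction of the twists across degrees encoded in the categorified Lemma \ref{cyclicequation}, or by building sphericality into the definition of the paracyclic side). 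Until that implication is established, even granting the categorified formula, you only get an equivalence onto the full subcategory of complexes whose composite twist functors are invertible, not onto the spherical complexes. So the proposal identifies the right target and the right intermediate statements, but both of its load-bearing steps remain open, one of them for a structural reason rather than a mere coherence burden.
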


\end{document}